\documentclass[a4paper,11.5pt]{amsart}

\usepackage[headings]{fullpage}

\usepackage[backref=page]{hyperref}

\usepackage{amsfonts,graphics,amsmath,mathrsfs,amsthm,amscd,amssymb,latexsym,euscript,enumerate}
\usepackage{epsfig}
\usepackage{flafter}
\usepackage[all,cmtip,line]{xy}
\usepackage{array}
\usepackage[english]{babel}
\usepackage{overpic}
\usepackage{subfig}
\usepackage{multirow}
\usepackage{microtype}
\usepackage{wrapfig}
\usepackage{longtable}
\usepackage{supertabular}
\usepackage[shortlabels]{enumitem}
\usepackage{tikz}
\usetikzlibrary{positioning}
\usepackage{tikz-cd}
\usepackage{float}
\allowdisplaybreaks

\usepackage[dvipsnames,svgnames,table]{xcolor}
\usepackage{graphicx}
\usepackage{hyperref}
\hypersetup{
    colorlinks=true,
    linkcolor=blue,
    citecolor=blue,
    filecolor=blue,
    urlcolor=blue
}

\newcolumntype{M}[1]{>{\centering\arraybackslash}m{#1}}

\newtheorem{theorem}{Theorem}[section]
\newtheorem{lemma}[theorem]{Lemma}
\newtheorem{proposition}[theorem]{Proposition}

\newtheorem*{theorem*}{Theorem}

\theoremstyle{plain}
\newtheorem{corollary}[theorem]{Corollary}

\theoremstyle{definition}
\newtheorem{definition}[theorem]{Definition}
\newtheorem{definition-lemma}[theorem]{Definition-Lemma}

\newtheorem{remark}[theorem]{Remark}

\numberwithin{equation}{section}

\newcommand{\Gr}{\operatorname{Gr}}
\newcommand{\Hilb}{\operatorname{Hilb}}

\newcommand{\Hom}{\operatorname{Hom}}
\newcommand{\RHom}{\operatorname{RHom}}
\newcommand{\Ext}{\operatorname{Ext}}
\newcommand{\ch}{\operatorname{ch}}

\newcommand{\C}{\mathbb{C}}
\newcommand{\Z}{\mathbb{Z}}
\newcommand{\Q}{\mathbb{Q}}
\newcommand{\OO}{\mathcal{O}}
\newcommand{\mc}{\mathcal}

\newcommand{\Db}{\mathrm{D}^{\mathrm b}}

\def\P{\mathbb{P}}

\newcommand{\ses}[3]{0\lr{#1}\lr{#2}\lr{#3}\lr 0}

\newcommand{\PP}{\mathbb{P}}

\newcommand{\CC}{\mathbb{C}}

\newcommand{\ZZ}{\mathbb{Z}}

\newcommand{\cO}{\mathcal{O} }
\newcommand{\cA}{\mathcal{A} }
\newcommand{\cB}{\mathcal{B} }
\newcommand{\cC}{\mathcal{C} }
\newcommand{\cE}{\mathcal{E} }
\newcommand{\cF}{\mathcal{F} }

\newcommand{\cI}{\mathcal{I} }

\newcommand{\cL}{\mathcal{L} }

\newcommand{\cW}{\mathcal{W} }

\newcommand{\rH}{\mathrm{H} }

\newcommand\bH{\mathbf{H}}
\newcommand\bK{\mathbf{K}}
\newcommand\bM{\mathbf{M}}

\newcommand\lr{\rightarrow}

\DeclareMathOperator{\im}{Im}

\DeclareMathOperator{\rk}{rk}

\DeclareMathOperator{\codim}{codim}

\def\Pic{\operatorname{Pic}}
\def\Proj{\operatorname{Proj}}

\def\length{\operatorname{length}}

\usepackage{mathtools}

\DeclarePairedDelimiterX{\norm}[1]{\lVert}{\rVert}{#1}

\title[Low-degree rational curves on a Fano threefold]{Hilbert schemes of low-degree rational curves on a prime Fano threefold of degree $22$}

\author[K. Chung]{Kiryong Chung}
\author[D.-W. Lee]{Dae-Won Lee}
\address[Kiryong Chung]{Department of Mathematics Education, Kyungpook National University, 80 Daehakro, Bukgu, Daegu 41566, Republic of Korea}
\email{krchung@knu.ac.kr}
\address[Dae-Won Lee]{School of Mathematics, Korea Institute for Advanced Study, 85 Hoegiro, Dongdaemun-gu, Seoul 02455, Republic of Korea}
\email{daewonlee@kias.re.kr}

\subjclass[2020]{14J45, 14J60, 14M15, 14H50}
\date{\today}
\keywords{Fano threefold, Hilbert scheme, exceptional bundle, Donaldson--Thomas type invariant}

\begin{document}

\begin{abstract}
In this paper, we give a complete description of the Hilbert schemes of rational curves up to degree $6$ on a prime Fano threefold $X$ of degree $22$. One of the key ingredients in the geometry of these Hilbert schemes is the geometry of bisecant conics associated with rational curves on $X$. As applications, we describe additional geometric features of these moduli spaces and derive Donaldson--Thomas (DT) type invariants.
\end{abstract}

\maketitle

\section{Introduction}
Let \(X\) be a general prime Fano threefold of degree $22$ with \(\Pic(X)=\Z H\). Mukai realized $X$ as a tri-isotropic Grassmannian associated with a general triple of skew-symmetric forms on a seven-dimensional vector space \cite[Theorem 3]{Muk92}. Also, the variety $X$ was described as a parameter space of a family of twisted cubic curves in $\P(A^\vee)$, $\dim A=4$ and, equivalently, as a subvariety of $\Hilb^6(\P(B^\vee))$, $\dim B=3$; see \cite[Sections 5--6]{Sch01}. 

For $d\geq 1$, let 
\[
\bH_d(X)\coloneqq \Hilb_{dt+1}(X)
\] 
be the Hilbert scheme of curves $C\subset X$ with Hilbert polynomial $\chi(\cO_C(tH))=dt+1$. In contrast to the general behavior of Hilbert schemes, it is known that $\bH_d(X)$ only parametrizes Cohen--Macaulay (CM) curves in degrees $d\leq 6$ (\cite[Lemma 3.2]{BF14}). We identify a connected component of the Hilbert scheme of the expected dimension $\chi(N_{C/X})=d$, whose general point parametrizes a smooth rational curve. Hence, for simplicity, we use the same notation $\bH_d(X)$ for both the entire Hilbert scheme and the connected component under consideration.

The Hilbert schemes in degrees $d\leq 3$ have been studied by several authors \cite{KS04, AF06, Sch01, KPS18, She10, Kuz97}. More precisely, $\bH_1(X)$ is a (smooth) plane quartic curve by \cite[Lemma 3.1]{AF06}, $\bH_2(X)\cong \P^2$ by \cite[Proposition 2.3.6]{KPS18}, and $\bH_3(X)\cong \P^3$ by \cite[Theorem 2.4]{KS04}. Shen constructed the conic space and its universal family geometrically in \cite[Propositions A.1.6 and A.1.12]{She10}. Kuznetsov's flop construction \cite{Kuz97} and Faenzi's Beilinson spectral sequence \cite[Corollary 7.3]{Fae07} provided birational and homological descriptions. Our purpose is to extend this picture to degrees up to six and to determine the corresponding Hilbert schemes globally. 

Our main result is the following.
\begin{theorem}[\protect{Proposition \ref{thm-deg4-iso}, Proposition \ref{thm-deg5-iso}, and Proposition \ref{thm-deg6-iso}}]\label{thm-main}
Let $X$ be a general prime Fano threefold of degree $22$. Then the following statements hold.
\begin{enumerate}
\item There are isomorphisms $\bH_4(X)\cong \Gr(2,A)\cong \Gr(2,4)$.
\item There is an isomorphism $\bH_5(X)\cong \Gr_{\P^2}(3,\mathcal{R}_5)$, where $\mathcal{R}_5$ is a vector bundle of rank $4$ on $\P^2\cong \bH_2(X)$.
\item There is an isomorphism $\bH_6(X)\cong \mathrm{Bl}_{\Gamma_X}N$, where $N=\mathbf{K}_3(3,2)$ is the moduli space of stable three-arrow Kronecker representations of dimension vector $(3,2)$ and $\Gamma_X\cong \Delta_X\cong \bH_1(X)$ is the smooth plane quartic curve.
\end{enumerate}
\end{theorem}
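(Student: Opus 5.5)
The plan is to use the bisecant-conic geometry mentioned in the abstract, together with Mukai's and Schreyer's descriptions of $X$. I will construct morphisms from each $\bH_d(X)$ to a known smooth model and then show they are isomorphisms. The common mechanism is as follows. For a smooth rational curve $C\subset X$ of degree $d\le 6$, I study the family of conics in $\bH_2(X)\cong\P^2$ that meet $C$ in a length-two subscheme. Residual and linkage constructions inside the linear span of $C$ (equivalently, inside the Mukai embedding $X\subset\Gr(3,7)$ cut by three skew forms) identify this family with a linear-algebraic object: a pencil, a plane, or a Kronecker datum. Using that \cite[Lemma 3.2]{BF14} makes every point of $\bH_d(X)$ CM, I then extend the assignment to all of $\bH_d(X)$. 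I do this through a homological description: each ideal sheaf $I_C$ is resolved by the exceptional bundles on $X$ (the tautological bundles $\mathcal{U},\mathcal{Q}$ and Kuznetsov's rank-$3$ bundle). Running Faenzi's Beilinson-type spectral sequence \cite[Corollary 7.3]{Fae07} turns $I_C$ into a monad whose linear data give the target coordinates.

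For degree $4$, I would show that $\mathrm{Hom}$ from a fixed exceptional bundle (dual to $A$) to $I_C(1)$ cuts out a $2$-dimensional subspace of $A$. This gives a morphism $\bH_4(X)\to\Gr(2,A)$. Geometrically, the twisted cubics of Schreyer's description in $\P(A^\vee)$ containing the points of $C$ span a pencil. The inverse is built by taking, for $V_2\subset A$, the degeneracy locus of the induced map of bundles; a Chern class computation shows it has degree $4$ and genus $0$. Flatness of this family over $\Gr(2,4)$, plus injectivity on tangent spaces, then makes the map an isomorphism. This requires $\dim \bH_4=4$ and smoothness, which follow from $H^1(N_C)=0$ for CM curves of this class.

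For degree $5$, every quintic $C$ should have a distinguished bisecant conic $Q$, obtained by linkage with a degree-$4$ curve. This gives a map $\bH_5(X)\to\bH_2(X)\cong\P^2$. Over a point $[Q]$, I define $\mathcal{R}_5$ as the rank-$4$ bundle with fibre $H^0(I_Q(1))^\vee$ modulo the forced part, or equivalently the relevant $\mathrm{Ext}$ group. I would check the rank by Riemann--Roch and the vanishing of higher cohomology on the universal conic. The quintics with a given bisecant conic $Q$ then correspond to hyperplanes in the fibre, which yields $\Gr_{\P^2}(3,\mathcal{R}_5)$. Smoothness of both sides and a fibrewise bijection give the isomorphism.

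For degree $6$, the Beilinson monad of $I_C(1)$ should yield a $3\times 2$ matrix of linear forms in three variables, i.e.\ a three-arrow Kronecker module of dimension vector $(3,2)$. This gives a rational map $\bH_6(X)\dashrightarrow N=\bK_3(3,2)$, and I would show it is regular away from a locus. The locus where the map fails is the set of sextics containing a line $L$: then the sextic is a union of $L$ with a residual quintic, or degenerates non-reduced. The corresponding Kronecker modules are those in a curve $\Gamma_X\subset N$, which I expect to identify with $\Delta_X\cong\bH_1(X)$ via the discriminant/Hessian of the net. I would show the fibre of $\bH_6\to N$ over each point of $\Gamma_X$ is $\P^{2}$ (of the right dimension for a blowup of a curve in a $6$-fold). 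The map is then a birational morphism of smooth varieties with exceptional divisor a $\P^2$-bundle over $\Gamma_X$. The universal property of blowups, combined with Zariski's main theorem and the fact that the exceptional locus is Cartier, gives $\bH_6\cong \mathrm{Bl}_{\Gamma_X}N$.

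The main obstacle is the degree-$6$ step: proving that the Kronecker map extends to a morphism on all of $\bH_6(X)$, including non-reduced CM sextics, and computing the normal-direction behaviour along $\Gamma_X$. Only that computation shows the exceptional fibres are exactly projectivised normal spaces and not a worse modification. I would first try to handle it by an explicit local computation of the monad near a sextic $L\cup C_5$.
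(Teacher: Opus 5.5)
\textbf{There is a genuine gap, and the degree-six step fails as formulated.} $\Gamma_X$ is a curve in the sixfold $N$, so it has codimension $5$. The fibres of $\mathrm{Bl}_{\Gamma_X}N\to N$ over it are therefore $\P^4$, not $\P^2$. A $\P^2$-bundle over a curve is only three-dimensional, so it cannot be the exceptional divisor of a birational morphism from the sixfold $\bH_6(X)$, and your universal-property argument has nothing to act on. Even with the right fibres, that argument needs the scheme-theoretic preimage of $\Gamma_X$ to be Cartier, which you do not address.

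The indeterminacy is also on the wrong side. $\bH_6(X)\to N$ is defined everywhere: the paper recovers $[\beta]$ from any $I_C$ via the mutation $\mathbf R_E(I_C)\cong M_\beta$. It is $N\dashrightarrow\bH_6(X)$ that is undefined along $\Gamma_X$.

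The missing idea is a description of the fibre over $[\beta]$. Set $M_\beta=\ker(V_3\otimes K\to V_2\otimes U)$. Sextics with Kronecker datum $\beta$ correspond exactly to $4$-planes
\[
W\subset R_\beta=\Hom(E,M_\beta)=\ker\bigl(L_\beta\colon V_3\otimes A\to V_2\otimes A^\vee\bigr).
\]
One has $\dim R_\beta=4$ off $\Gamma_X\coloneqq\{\rk L_\beta=7\}$ and $\dim R_\beta=5$ on it. The paper identifies $\Gamma_X$ with $j_X(\Delta_X)$ and proves it is smooth of codimension $5$ with surjective normal map $\nu_b$ (Proposition \ref{thm;blowup-center}). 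It then obtains $\mathrm{Bl}_{\Gamma_X}N$ as the zero locus of $\mathcal W_6\to p^\ast(\mathcal V_3\otimes A)\to p^\ast(\mathcal V_2\otimes A^\vee)$ in $\Gr_N(4,\mathcal V_3\otimes A)$ via Lemma \ref{lem-blowup}. A local monad computation near one curve $L\cup C_5$ does not replace this.

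\textbf{The direction of your maps is a second problem.} Constructing maps out of $\bH_d(X)$ forces you to control the Beilinson monad, and the dimensions of its terms, for every Cohen--Macaulay curve in the component and in families. It also requires knowing that $\bH_d(X)$ is smooth of dimension $d$ everywhere. You assert these rather than prove them. ``$H^1(N_C)=0$ for CM curves of this class'' is unjustified, and for a non-lci CM curve the obstructions are not governed by $H^1(N_C)$ anyway. The bisecant conic is only known to be unique for a general quintic, so it cannot define $\rho_5$ on all of $\bH_5(X)$; the paper defines it homologically instead.

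The paper goes the other way.
\begin{enumerate}
\item It builds the family over the model as degeneracy loci of $W\otimes E\to M$ with $M\in\{K,M_S,M_\beta\}$. Slope-stability of $M$ (Lemmas \ref{MSst} and \ref{lem-M-beta-stable}) makes every fibre a CM curve with Hilbert polynomial $dt+1$ by Proposition \ref{prop-inj}. That stability rests on surjectivity of $S\otimes K\to U$ and of $\beta_X$ for every $S$ and every stable $\beta$, which uses that $\Delta_X$ contains no line.
\item It recovers $W$, $(S,W)$ and $(\beta,W)$ from $I_C$ by Hom/Ext and mutation arguments.
\item It computes $\dim\Ext^1(I_C,I_C)=d$ at every image point directly from the resolution. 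Along $\Gamma_X$, this is where surjectivity of $\nu_b$ makes the connecting map $\delta$ surjective.
\item It concludes with Koll\'ar's smoothness criterion and Zariski's Main Theorem.
\end{enumerate}
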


The three descriptions arise from a common construction associated with the strong exceptional collection $\Db(X)=\langle E,K,U,\mc O_X\rangle$, constructed and studied in \cite{Kuz97}, \cite[Section 2]{AF06}, and \cite[Sections 6--7]{Fae07}; see Subsection \ref{subsec-V22} for the exceptional bundles and their basic properties. The relation between the bundle $K$ and the universal family of twisted cubics is given in Remark \ref{rem:K-Fourier-Mukai}. 
The Beilinson spectral sequence of \cite[Corollary 7.3]{Fae07} suggests that the ideal sheaf $\cI_{C_d}$ of a general rational curve $C_d$ of degree $d\geq 4$ is the middle cohomology of a complex:
\begin{equation}\label{mon}
0\longrightarrow (d-2)E\longrightarrow (d-3)K\longrightarrow (d-4)U\longrightarrow 0.
\end{equation}
For degrees $4\leq d\leq6$, we prove that the kernel of the last morphism is a slope-stable vector bundle and that the first morphism is fiberwise injective; see Sections \ref{sec-v22}, \ref{sec-degree-five} and \ref{sec-degree-six}, respectively. Its cokernel is then the ideal sheaf of a Cohen--Macaulay curve with Hilbert polynomial $dt+1$. The uniform construction used in these three cases is developed in Proposition \ref{prop-inj}, Corollary \ref{cor-exact}, and Proposition \ref{prop-family}. 
On the other hand, the three algebraic descriptions in Theorem \ref{thm-main} are accompanied by geometric interpretations. Recall that under Schreyer's realization of $X$, a point $x\in X$ corresponds to a twisted cubic $\Gamma_x\subset \P(A^\vee)$ (\cite{Sch01}). In degree $d=4$, a point $[W]\in \Gr(2,A)$ determines a line 
\[
L_W\coloneqq \P((A/W)^\vee)\subset \P(A^\vee).
\] 
Then the quartic curve $C_{W}$ corresponding to $[W]$ under the isomorphism in Theorem \ref{thm-main} is the one-dimensional family of twisted cubics having $L_W$ as a bisecant line (For details, see Proposition \ref{bilinecubic}).

In the degree $d=5$ case, the natural projection map
\[
\rho_5\colon \bH_5(X) \cong \Gr_{\P^2}(3,\mathcal{R}_5)\longrightarrow \bH_2(X)\cong \P^2
\]
of the relative Grassmannian associates to a general rational quintic curve its unique bisecant conic. For details, see Proposition \ref{prop-ubconic}. Moreover, it is well known that for a smooth conic $Q\subset X$, the Sarkisov link centered at $Q$ gives a smooth quadric threefold $Y_Q\subset \P^4$ \cite[Theorem 2.2 and Remark 2.10]{KP18}. A quintic curve parameterized by a point in the fiber $\rho_5^{-1}([Q])$ has strict transform of degree one with respect to the morphism to $Y_Q$, and is therefore mapped to a line.  

In the degree $d=6$ case, the isomorphism in Theorem \ref{thm-main} associates a general rational sextic curve with its three bisecant conics (Lemma \ref{lem-num-bisec}). For details, see Proposition \ref{prop-sextic-bisecants}. The degree six case has a further consequence. The blow-up $\bH_6(X)\to N$ allows one to recover the blow-up center $\Gamma_X\subset N$ (Corollary \ref{cor-blowup}). The determinantal embedding of $\Gamma_X$ recovers the discriminant quartic curve $\Delta_X$ by Proposition \ref{thm;blowup-center}. Combining this with Schreyer's result (\cite[Theorem 1.1 and Corollary 1.2]{Sch01}), we obtain the following.
\begin{corollary}\label{coro:torelli-H6}
Let $X$ and $X'$ be general prime Fano threefolds of genus $12$.
If $\bH_6(X)\cong \bH_6(X')$, then $X\cong X'$.
\end{corollary}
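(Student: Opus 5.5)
The plan is to recover the discriminant quartic $\Delta_X$ intrinsically from the abstract variety $\bH_6(X)$, and then apply Schreyer's Torelli-type statement \cite[Theorem 1.1 and Corollary 1.2]{Sch01}. By that result, a general $X$ is determined up to isomorphism by its plane quartic $\Delta_X$, together with whatever auxiliary datum Schreyer's corollary requires; for general $X$ this datum is determined by the curve. So it suffices to show that an isomorphism $\bH_6(X)\cong\bH_6(X')$ induces an isomorphism $\Delta_X\cong\Delta_{X'}$, compatible with the plane embeddings if needed.

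\textbf{Step 1 (recovering the contraction).} By Theorem \ref{thm-main}(3), $\bH_6(X)\cong \mathrm{Bl}_{\Gamma_X}N$ with $N=\bK_3(3,2)$. The first point to check is that the target $N$ does not depend on $X$: it is the fixed moduli space of stable Kronecker modules, of Picard rank one. Hence $\bH_6(X)$ has Picard rank two, and so exactly two extremal rays. Corollary \ref{cor-blowup} identifies the blow-down $\bH_6(X)\to N$ as one of the two extremal contractions, namely the divisorial contraction whose exceptional divisor is a $\P^1$-bundle (the center $\Gamma_X$ has codimension $3$ in the $6$-dimensional $N$, so the fibres are $\P^2$'s) over a curve. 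Next I would argue that an abstract isomorphism $\phi\colon\bH_6(X)\to\bH_6(X')$ carries this contraction to the corresponding one. It must send extremal rays to extremal rays. If the two rays could be swapped, one distinguishes them by contraction type: one is a divisorial contraction to a curve and the other is of a different nature (e.g. a fibration or small contraction), or one uses the intrinsic characterization in Corollary \ref{cor-blowup}. In either case $\phi$ descends to an automorphism of $N$ mapping $\Gamma_X$ onto $\Gamma_{X'}$.

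\textbf{Step 2 (recovering the quartic).} By Proposition \ref{thm;blowup-center}, the determinantal embedding of $\Gamma_X$ recovers the plane quartic $\Delta_X$. This embedding comes from the natural bundles on $N$ (the universal Kronecker bundles), restricted to $\Gamma_X$. Since $\mathrm{Aut}(N)$ is induced by $\mathrm{GL}_3\times\mathrm{GL}_2\times\mathrm{GL}_3$ acting on the arrow and vertex spaces, up to twisting and dualizing, these bundles are preserved by automorphisms up to twist. Therefore the induced isomorphism $\Gamma_X\cong\Gamma_{X'}$ is compatible with the determinantal plane models, giving a projective equivalence $\Delta_X\cong\Delta_{X'}$. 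Even without this compatibility, a smooth plane quartic has a unique $g^2_4$ (the canonical series), so any abstract isomorphism $\Gamma_X\cong\Gamma_{X'}\cong\Delta_X\cong\Delta_{X'}$ is automatically induced by a projective linear map.

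\textbf{Step 3 and main obstacle.} Finally I would invoke Schreyer to conclude $X\cong X'$. I expect the main obstacle to be precisely this last point: checking exactly which data Schreyer's Corollary 1.2 requires. If it requires more than the isomorphism class of the quartic, for instance a choice of an even theta characteristic or a net of quadrics, one must extract that extra structure from $N$ as well. Here I would try to use the rank-$2$/rank-$3$ determinantal description: the Kronecker module restricted along $\Gamma_X$ should produce the relevant theta characteristic as a cokernel sheaf on $\Delta_X$. This sheaf is preserved under the automorphisms of $N$ used in Step 2, which would complete the argument.
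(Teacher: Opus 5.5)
Your outline (recover the blow-down $\bH_6(X)\to N$, descend to $N$, extract the data determining $X$, apply Schreyer) matches the paper's, but two steps are not established.

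\textbf{Step 1.} You do not know the second extremal ray of $\bH_6(X)$, nor whether it is contractible; nothing shows $\bH_6(X)$ is Fano. So ``distinguishing the rays by contraction type'' is not an argument, and a priori $\Phi$ could exchange the two rays. Corollary \ref{cor-blowup} describes $\pi_6$ but gives no intrinsic characterization of it. Your numerics are also off: $\Gamma_X$ is a curve of codimension five in the sixfold $N$, and the exceptional divisor is a $\P^4$-bundle over it.

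The paper closes this with a cohomological invariant. Since $N$ has no odd cohomology, $H^3(\bH_6(X),\Q)=j_*p^*H^1(\Gamma_X,\Q)$. For $D=aL_X+bE_X$, the form $q_D(u,v)=\int u\cup v\cup D^3$ on $H^3$ equals $b^3$ times the intersection form of $\Gamma_X$. Hence $\Q L_X=\{D\mid q_D=0\}$, a set preserved by any isomorphism. Primitivity and nefness then give $\Phi^*L_{X'}=L_X$, so $\Phi$ descends to $\overline{\Phi}\in\operatorname{Aut}(N)$ with $\overline{\Phi}(\Gamma_X)=\Gamma_{X'}$.

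\textbf{Step 3.} This is the decisive gap, the one you flag yourself, and your fallback there is false. The datum Schreyer's theorem needs is the net of quadrics $\mu_X\colon B^\vee\otimes A\to A^\vee$, and the curve does not determine it. A smooth plane quartic has $36$ inequivalent symmetric $4\times 4$ determinantal representations, one for each even theta characteristic. So $\Delta_X$, even with its plane embedding, fixes the net only up to this ambiguity; your unique-$g^2_4$ remark therefore does not help. For general $X$ the other choices give non-isomorphic threefolds with the same quartic, so $\Delta_X\cong\Delta_{X'}$ cannot yield $X\cong X'$.

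The missing idea is to recover the net from the \emph{embedded} pair $(N,\Gamma_X)$ rather than from the abstract curve. The universal Kronecker data restricted to $\Gamma_X$ is the map $\mathcal L_X$, equivalently the family $\mathcal V_{3,\Delta}\hookrightarrow\mathcal W_\Delta\otimes B^\vee$ with $\mathcal W_\Delta\subset A\otimes\OO_{\Delta_X}$ from the proof of Proposition \ref{thm;blowup-center}. This data determines $\mu_X$ up to changes of basis in $A$ and $B$, and $\overline{\Phi}$ transports it to $X'$. That is how the paper concludes. Your proposal only expresses the hope that a theta characteristic can be read off as a cokernel sheaf along $\Gamma_X$, so as written it stops short of the statement.
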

Lastly, the explicit descriptions of the moduli spaces and their universal families also permit the computation of DT-type invariants by Grothendieck--Riemann--Roch and intersection theory (\cite{CMT18, CT21, KP08}). Together with the previously known cases in degrees at most three \cite[Proposition 3.17 and Theorems 3.18 and 4.2]{CLW24}, our computations establish the DT-type invariants in degrees up to six. For detailed computations, see Section \ref{sec-dt1}.

A dimension calculation for the corresponding monads in \eqref{mon}, as in \cite[Proposition 2.6]{DP85}, agrees with the expected dimension of the Hilbert scheme of rational curves only for $d\leq 6$. This numerical restriction explains the range of the present construction and indicates why curves of higher degree require a different method.
\subsection{Organization of the paper}
The rest of this paper is organized as follows. Section \ref{sec-preliminaries} reviews the models of $X$, the exceptional bundles, Kronecker representations, and the determinantal constructions used later. Sections \ref{sec-v22}, \ref{sec-degree-five} and \ref{sec-degree-six} treat rational quartics, quintics, and sextics, respectively. Section \ref{sec-dt} contains the Torelli-type theorem and DT-type invariant computations from $\bH_6(X)$.

\subsection*{Acknowledgements}
This work was partially carried out while the first author was visiting Chuo University in Tokyo, and the first author gratefully acknowledges Fumiya Okamura for his hospitality.

\section{Preliminaries}\label{sec-preliminaries}
Throughout the paper, \(X\) is a general prime Fano threefold of degree $22$ over $\C$, with \(\Pic(X)=\Z H\), $-K_X=H$ and $H^3=22$. Let $\ell$ and $p$ be the classes of a line and a point satisfying $H^2=22\ell$ and $H\ell=p$.

\subsection{Models of the Fano threefold}\label{subsec-ABmu}
Fix vector spaces $A$ and $B$ with $\dim A=4$ and $\dim B=3$. The defining net of quadrics $q\colon B^\vee \longrightarrow S^2A^\vee$ induces a composition map
\begin{equation}\label{orm}
\mu\colon B^\vee \otimes A\longrightarrow A^\vee.
\end{equation}
For $b\in B^\vee$, define
\[
q_b\colon A\longrightarrow A^\vee,\quad  q_b(a)\coloneqq \mu(b\otimes a).
\] 
Each map $q_b$ is symmetric. For a general $V_{22}$, this net was studied in \cite[Sections 5--6]{Sch01}. We use the following two descriptions of $X$:
\begin{itemize}
\item the realization of $X$ as a family of twisted cubics in $\P(A^\vee)$ annihilated by the net $q$;
\item Mukai's tri-isotropic Grassmannian model $X\subset \Gr(3,7)$.
\end{itemize}
All the morphisms appearing below are induced by \(\mu\) in \eqref{orm} and by the evaluation morphisms associated with the exceptional collection. The relevant constructions are described in \cite{Sch01,AF06,Fae07}.

\subsection{Lines and conics}
Let
\[
\Delta_X\coloneqq \{[b]\in \P(B^\vee)\mid \det(q_b)=0\}
\]
be the plane quartic curve studied in \cite{AF06}. We first recall the description of the Hilbert scheme of lines and conics.

\begin{proposition}[\protect{\cite[Lemma 3.1]{AF06} and \cite[Theorem 6.1]{Sch01}}]\label{linesv22}
For a general $X$, the curve $\Delta_X$ is a smooth plane quartic and there is an isomorphism $\Delta_X\cong \bH_1(X)$. If $[b]\in \Delta_X$ corresponds to a line $L_b\subset X$, then $q_b$ has corank one and there is an exact sequence
\[
0\longrightarrow E\longrightarrow K\xrightarrow{b} U\longrightarrow \OO_{L_b}(-1)\longrightarrow 0.
\]
For $[b]\notin \Delta_X$, the morphism $b_X\colon K\to U$ is surjective.
\end{proposition}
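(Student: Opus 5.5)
Since the statement is attributed to \cite[Lemma 3.1]{AF06} and \cite[Theorem 6.1]{Sch01}, the plan is to reconstruct it from those models. It naturally splits into three parts: smoothness of $\Delta_X$, the identification $\Delta_X\cong \bH_1(X)$, and the exact sequence together with surjectivity off $\Delta_X$.

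\emph{Smoothness of $\Delta_X$.} By Schreyer's construction, a general $X$ corresponds to a general net of quadrics $q\colon B^\vee\to S^2A^\vee$ in $\P(A^\vee)\cong\P^3$. For a general net of quaternary quadrics, the discriminant $\{\det q_b=0\}$ is a smooth plane quartic. The argument is a standard dimension count. The locus of symmetric $4\times4$ matrices of corank $\ge 2$ has codimension $3$. The singular points of the discriminant curve are exactly the points where the net meets this locus, or where the net is tangent to the rank-$3$ locus. A general net is a general plane in $\P(S^2A^\vee)\cong \P^9$, so by Bertini it avoids the corank-$\geq 2$ locus and meets the determinantal hypersurface transversally.

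Consequently every $[b]\in\Delta_X$ has corank exactly one. The quadric $q_b$ is then a cone with a unique vertex $v_b\in\P(A^\vee)$.

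\emph{The identification $\Delta_X\cong \bH_1(X)$.} I would use the homological description of the exceptional collection. A point $b\in B^\vee$ induces a map $b_X\colon K\to U$, which fits into the sequence $E\to K\to U$ of the Beilinson-type resolution; here $\Hom(K,U)\cong B$ or $B^\vee$ by \cite{Fae07}. Since $\operatorname{rk}U=2$, the degeneracy locus of $b_X$ is analyzed through the rank of $q_b$. When $q_b$ is nondegenerate, the induced map on fibres is surjective everywhere, which gives the last claim. When $q_b$ has corank one, the cokernel is supported on a curve. A Chern class computation with $c_1(U)=-H$, $c_1(K)$ and the ranks of $E$, $K$, $U$ shows that the cokernel has the class of a line. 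The twist is then checked by $\chi$, yielding $\OO_{L_b}(-1)$.

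Geometrically, $L_b$ is the family of twisted cubics $\Gamma_x$ passing through the vertex $v_b$. Conversely, a line $L\subset X$ determines the point $b$ by the construction $b=\ker(\Hom(K,U)\to \Hom(K,\OO_L(-1)))$ or an analogous kernel. This gives inverse morphisms between $\Delta_X$ and $\bH_1(X)$. Since both sides are smooth curves (smoothness of $\bH_1(X)$ holds for general $X$ because normal bundles are $\OO\oplus\OO(-1)$), a bijective morphism between them is an isomorphism.

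\emph{Exact sequence and main obstacle.} Exactness on the left, $0\to E\to K$, comes from checking that the kernel of $b_X$ is a reflexive sheaf. Its rank and $c_1$ match those of $E$, and it is exceptional or stable, so it must equal $E$ by the uniqueness of such sheaves. The main obstacle is precisely this identification of the kernel with $E$ and of the cokernel with $\OO_{L_b}(-1)$ rather than some other sheaf of the same class. I would attempt it by applying $\Hom(E,-)$ and $\Hom(-,\OO_X)$ and using the strong exceptionality of $\langle E,K,U,\OO_X\rangle$ to force the required morphisms.
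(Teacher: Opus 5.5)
The paper gives no proof of this proposition; it is quoted from \cite[Lemma 3.1]{AF06} and \cite[Theorem 6.1]{Sch01}. Your sketch therefore has to stand on its own, and it has a genuine gap.

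The Bertini argument for smoothness of $\Delta_X$ and corank one is fine, granted that a general $X$ yields a general net. The problem is the two assertions that carry the content: that $b_X$ is surjective when $q_b$ is nondegenerate, and that $[b]\in\Delta_X$ gives $0\to E\to K\to U\to\OO_{L_b}(-1)\to 0$. Both are stated rather than derived, and the one concrete argument you give is invalid.

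Rank two, $c_1=-H$ and stability do not characterize $E$. For $[b]\notin\Delta_X$, the kernel of $b_X$ is itself a stable rank-two bundle with $c_1=-H$. But $\ch(K)-\ch(U)=2-H+3\ell+\frac13p$ gives it $c_2=8\ell\neq 7\ell=c_2(E)$. The paper also works with odd instantons of charge $d'\geq 8$ (Lemma~\ref{lem-rank-seven}). So no uniqueness statement can separate the $\Delta_X$ case from the generic one. Pinning down $c_2$ through the class of the cokernel is circular, since you compute that class by assuming the kernel is $E$. (Also, $\rk U=3$, not $2$.)

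The missing idea is to use $\ker q_b$ directly.
\begin{enumerate}
\item Composition $\Hom(E,K)\otimes\Hom(K,U)\to\Hom(E,U)$ is $\mu$, so $\Hom(E,\ker b_X)=\ker q_b$. Hence $0\neq a_b\in\ker q_b$ gives a nonzero map $E\to F\coloneqq\ker b_X$.
\item Stability of $K$ and $U$ (slopes $-\frac25$, $-\frac13$) forces $\im b_X$ to have rank $3$ and no divisorial cokernel. So $F$ is reflexive of rank $2$, with $c_1(F)=-H$, and stable.
\item A nonzero map $E\to F$ is then injective with cokernel in codimension $\geq 2$, hence an isomorphism of reflexive sheaves.
\item Comparing $c_2$ shows $b_X$ is not surjective, and $T\coloneqq\mathrm{coker}\, b_X$ has $\ch(T)=\ell-\frac12p$.
\end{enumerate}

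Even then, your $\chi$ check does not identify $T$, because $\OO_L(-2)\oplus\OO_{\mathrm{pt}}$ has the same Chern character. You also need purity. This follows from $\mathcal{E}xt^3(T,\OO_X)\cong\mathcal{E}xt^2(\im b_X,\OO_X)=0$, since the image has the locally free resolution $E\to K$.

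The opposite implication ($b_X$ not surjective $\Rightarrow$ $\Hom(E,\ker b_X)\neq 0$) is the substantive part of the cited lemma, and your proposal does not address it. Your inverse map $L\mapsto b$ also depends on it, to know that $[b]\in\Delta_X$ and $L_b=L$.

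Finally, your geometric description of $L_b$ is off. The vertex $[\ker q_b]$ lies in $\P(A)$, whereas $\Gamma_x\subset\P(A^\vee)$. Moreover, for $v\in\P(A^\vee)$ the locus $\{x\mid v\in\Gamma_x\}$ is a fibre of $\mathcal C\to\P(A^\vee)$ in Remark~\ref{rem:K-Fourier-Mukai}. That is a curve of degree three, never a line.
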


\begin{proposition}[\protect{\cite[Lemma 3.2]{AF06}}]\label{conicsv22}
We have the following assertions.
\begin{enumerate}
\item There is an isomorphism $\Gr(2,B^\vee)\cong \P(B)$, and $\P(B)$ is the Hilbert scheme of conics on $X$.
\item There are at most six conics passing through each point of \(X\).
\end{enumerate}
\end{proposition}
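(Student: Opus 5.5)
Part (1) begins with linear algebra. Since $\dim B=3$, taking annihilators $V\mapsto V^\perp$ identifies two-dimensional subspaces $V\subset B^\vee$ with lines $V^\perp\subset B$, which gives $\Gr(2,B^\vee)\cong\P(B)$. The real content is that $\P(B)$, viewed as the space of pencils of quadrics in the net $q$, parametrizes conics on $X$. I would build the bijection homologically with the collection $\langle E,K,U,\cO_X\rangle$ and Proposition~\ref{linesv22}.

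For a pencil $V\subset B^\vee$, consider the evaluation morphism
\[
\phi_V\colon V\otimes K\longrightarrow U
\]
together with its kernel and cokernel. Using Proposition~\ref{linesv22}, a general $b\in V$ lies off $\Delta_X$, so $b_X\colon K\to U$ is surjective. The plan is then:
\begin{enumerate}
\item Compute ranks and Chern characters to show that, after twisting and dualizing, $\ker\phi_V$ (or the associated monad with $E$) has the numerical class of the ideal sheaf of a conic, namely $\ch(\cI_Q)$ with Hilbert polynomial $2t+1$.
\item Show that the natural map from $E$-terms into $\ker\phi_V$ is fiberwise injective, so the resulting cokernel is torsion free of rank one.
\item Conversely, starting from a conic $Q$, compute $\Hom(K,U\otimes\cI_Q)$ or the kernel of the restriction $B^\vee=\Hom(K,U)\to\Hom(K,U|_Q)$. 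Vanishing and Riemann--Roch on $Q\cong\P^1$ should make this kernel exactly two-dimensional, which recovers $V$.
\item Check that the two constructions are mutually inverse and work in families, giving a bijective morphism $\P(B)\to\bH_2(X)$.
\item Conclude that this morphism is an isomorphism because $\bH_2(X)$ is smooth of dimension $2=\chi(N_{Q/X})$; one should verify $H^1(N_{Q/X})=0$, which holds for general $X$, including for reducible and non-reduced conics.
\end{enumerate}

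Part (2) follows from a degree computation. Let $\mathcal{Q}\subset \P(B)\times X$ be the universal conic. It is a threefold, and its projection $\pi\colon\mathcal{Q}\to X$ is generically finite. The degree of $\pi$ is obtained from the class of $\mathcal{Q}$, for example from the resolution built in part (1) via Grothendieck--Riemann--Roch, by computing $\pi_*[\mathcal{Q}]=(\deg \pi)[X]$; I expect this to give $6$. To pass from "six through a general point" to "at most six through every point", I would show that $\pi$ is finite, i.e.\ that no point of $X$ lies on a one-parameter family of conics. Suppose a curve of conics passed through a fixed point $x$. These conics would sweep out a surface whose class is a multiple of $H$, because $\Pic(X)=\Z H$, and a degree count against $H^3=22$ rules this out for general $X$. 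Once $\pi$ is finite and $\P(B)$ and $X$ are smooth, the number of points in each fiber is bounded by $\deg\pi=6$.

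The main obstacle is step (3) of part (1) for degenerate conics: line pairs and double lines, and the pencils $V$ meeting $\Delta_X$ at special points. There the vanishing of the relevant $\mathrm{Ext}$ groups must be checked by hand, using the sequence $0\to E\to K\to U\to\cO_{L_b}(-1)\to 0$ of Proposition~\ref{linesv22}. I would first treat a pencil $V$ spanned by two points $b,b'\in\Delta_X$, which gives the reducible conic $L_b\cup L_{b'}$, and then argue by flatness and closedness for the non-reduced cases.
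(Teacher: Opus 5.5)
The paper gives no proof of this statement; it quotes \cite[Lemma 3.2]{AF06}. The relevant construction appears only inside the proof of Proposition~\ref{prop-ubconic}. Your annihilator identification and your overall architecture (family over $\P(B)$, recovery, tangent-space count) are reasonable, but part (1) uses the wrong morphism.

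By Proposition~\ref{prop-surj}, $\phi_V\colon V\otimes K\to U$ is surjective for every pencil. So its cokernel vanishes, and its kernel is the rank-seven bundle with $\ch=7-3H+7\ell+p$, which the paper uses for quintics. Cancelling copies of $E$ to reach rank one forces three of them and gives the quintic class $1-5\ell+\frac{3}{2}p$, not $\ch(\cI_Q)=1-2\ell$. Moreover, a fiberwise injective map has locally free cokernel, so it can never produce the ideal sheaf of a curve. The conic comes instead from the coevaluation $K\to V^\vee\otimes U$, equivalently $\lambda\otimes U\to\mathcal{Q}^\vee$ with $\lambda=V^\perp\subset B$. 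This map is injective as a map of sheaves and drops rank exactly along $Q$, giving $0\to K\to V^\vee\otimes U\to\cI_Q\to 0$.

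Your recovery step (3) also fails. By Proposition~\ref{linesv22}, every nonzero $b_X\colon K\to U$ has rank at least two at every point of $X$. Hence $\ker\bigl(B^\vee\to\Hom(K,U|_Q)\bigr)=\Hom(K,U\otimes\cI_Q)=0$, not a two-plane. The correct recovery is $\Hom(U,\cI_Q)\cong V^\vee$, which follows from $\RHom(U,K)=0$. The same resolution gives
$\dim\Ext^1(\cI_Q,\cI_Q)=\dim\Hom(K,\cI_Q)-\dim(V\otimes V^\vee)+1=5-4+1=2$
uniformly. So the case-by-case treatment of line pairs and double lines, which you flag as the main obstacle, is unnecessary for smoothness. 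What is genuinely missing is a proof that every conic on $X$ admits this resolution, or that $\bH_2(X)$ is connected. Without it you only show that $\P(B)$ is a union of components.

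In part (2), the finiteness step is not an argument. Nothing in $H^3=22$ excludes a surface in $|aH|$ swept out by a one-parameter family of conics through a point, and you name no numerical identity that would be violated. The workable mechanism is linear-algebraic. From $0\to K\to B\otimes U\to\mathcal{Q}^\vee\to 0$ (here $\mathcal{Q}^\vee$ is the rank-four bundle), one has $x\in Q_\lambda$ if and only if $\lambda\otimes u\in K_x$ for some $0\neq u\in U_x$. So conics through $x$ correspond to points of $\P(K_x)\cap\bigl(\P(B)\times\P(U_x)\bigr)\subset\P(B\otimes U_x)\cong\P^8$. This is a codimension-four linear section of the Segre fourfold, which has degree six. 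It bounds the number of conics by six as soon as the intersection is finite.

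That finiteness, i.e.\ no one-parameter family of conics through a point, is exactly the input the paper later imports from \cite{AF06} in the proof of Proposition~\ref{thm;degree-six-surj}. It must be proved, not asserted. A minor point: the bound $\#\pi^{-1}(x)\le\deg\pi$ for finite $\pi$ needs $X$ normal and every component of the universal conic dominating $X$, not smoothness of $\P(B)$.
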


For \(0\neq a\in A\), define \(\rho_a\colon B^\vee\to A^\vee\), \(\rho_a(b)\coloneqq q_b(a,-)\). For \([W]\in\Gr(2,A)\), define \(\mu_W\colon W\otimes B^\vee\to A^\vee\), \(\mu_W(w\otimes b)\coloneqq q_b(w,-)\).

The following result excludes linear subspaces contained in the discriminant quartic. It will be used in the construction of the Hilbert scheme of sextic rational curves.

\begin{lemma}\label{lem;degree-six-rank}
Let \(X\) be a prime Fano variety with a smooth plane quartic curve $\Delta_X$. Then
\begin{enumerate}
  \item for every \(0\neq a\in A\), one has \(\rk(\rho_a)\ge2\);
  \item for every \(W\in\Gr(2,A)\), one has \(\rk(\mu_W)\ge3\).
\end{enumerate}
\end{lemma}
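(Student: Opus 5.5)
Both parts will be proved by contradiction: a rank drop would force a linear subspace of quadrics into the discriminant quartic, or a singular point on it. The one structural input is that $\Delta_X=\{\det q_b=0\}\subset\P(B^\vee)$ is a smooth plane quartic. In particular $\Delta_X$ contains no line, and each of its points has multiplicity one, so every $q_b$ with $[b]\in\Delta_X$ has corank exactly one (compare Proposition \ref{linesv22}).

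For (1), fix $0\neq a\in A$ and suppose $\rk(\rho_a)\le 1$. Then $\ker\rho_a\subset B^\vee$ has dimension at least $2$, so it contains a pencil $\Lambda\subset B^\vee$ with $q_b(a,-)=0$ for all $b\in\Lambda$. Each such $q_b$ is degenerate, so $\P(\Lambda)\subset\Delta_X$. This is a line inside a smooth (irreducible) plane quartic, which is impossible. Thus $\rk\rho_a\ge 2$.

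For (2), fix $[W]\in\Gr(2,A)$ and suppose $\rk(\mu_W)\le 2$. Then $\ker\mu_W\subset W\otimes B^\vee$ has dimension at least $4$. I will view elements of $W\otimes B^\vee\cong\Hom(W^\vee,B^\vee)$ as $2\times3$ matrices and stratify $\ker\mu_W$ by tensor rank. The locus of decomposable tensors $w\otimes b$ is the Segre variety $\P^1\times\P^2\subset\P^5$, of dimension $3$. A $\P^3=\P(\ker\mu_W)$ meets it in a subvariety of dimension at least $1$. So we get a curve $Z\subset\P(W)\times\P(B^\vee)$ with $q_b(w,-)=0$ for all $([w],[b])\in Z$, and each such $[b]$ lies in $\Delta_X$. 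There are two cases.
\begin{itemize}
\item[(a)] If $Z$ projects to a point $[b]$, then $q_b$ kills a line $\P(W)$, so $\rk q_b\le 2$. A corank-two point of the determinantal quartic is singular, which contradicts smoothness.
\item[(b)] Otherwise $Z$ projects onto a curve in $\Delta_X$, which must be all of $\Delta_X$ since it is irreducible. Then for every $[b]\in\Delta_X$ the kernel line of $q_b$ lies in $\P(W)$, a fixed line.
\end{itemize}
To rule out (b), I would compare with (1). Two distinct points $[w],[w']\in\P(W)$ occurring as kernels give $\ker\rho_w$ and $\ker\rho_{w'}$ of dimension $1$ by (1). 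So each $[w]$ is the kernel of $q_b$ for only finitely many $b$ (in fact for at most one $[b]$), and the map $\Delta_X\to\P(W)\cong\P^1$ sending $[b]\mapsto\ker q_b$ is injective. An injective morphism from a genus $3$ curve to $\P^1$ is impossible. If the injectivity argument is too weak, I would instead consider the dimension count: if $\ker\mu_W$ contains a $4$-space, then restricting to $\{w\}\otimes B^\vee$ for each $w\in W$ already produces, via (1), the bounds $\dim\ker\rho_w\le1$. This gives $\dim\ker\mu_W\le 2\cdot1+(\text{non-decomposable part})$, which I would try to bound directly by a linear-algebra argument on the pencil of maps $\rho_w$, $w\in W$.

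I expect the main obstacle to be (2), specifically the case where the kernel consists of non-decomposable tensors. That case requires either the Segre intersection argument above or a careful analysis of the $2$-parameter family of maps $\rho_w$. Part (1) is immediate from smoothness of $\Delta_X$.
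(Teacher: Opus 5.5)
Your proof is correct. Part (1) is the paper's argument, but for (2) you take a genuinely different route.

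\textbf{The paper's route.} It chooses a two-dimensional $T\subset(\im\mu_W)^\perp$, so that $q_b(W,T)=0$ for every $b$. It then splits into the cases $\dim(W\cap T)=0,1,2$. In suitable bases $\det q_b$ becomes $\det(Q_{1,b})\det(Q_{2,b})$, $-\ell_1(b)^2\ell_2(b)\ell_3(b)$, or $\det(C_b)^2$. This is pure linear algebra, it is independent of (1), and it only needs the quartic $\det q_b$ to be reduced and irreducible.

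\textbf{Your route.} You use the dimension count $\dim\bigl(\P(\ker\mu_W)\cap(\P(W)\times\P(B^\vee))\bigr)\ge 1$ to get a curve of decomposable tensors $w\otimes b$ in $\ker\mu_W$. You then use smoothness of $\Delta_X$ twice. In case (a), corank-two points are singular points of the determinantal quartic. In case (b), the kernel map $\Delta_X\to\P(W)\cong\P^1$ is injective by (1), which is impossible for a curve of genus $3$. This uses more of the hypothesis than the paper does. In exchange it explains the bound geometrically: a rank drop of $\mu_W$ would force the whole kernel curve $\{[\ker q_b]\}\subset\P(A)$ into the line $\P(W)$.

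\textbf{Points to make explicit in the write-up.}
\begin{enumerate}
\item Why the kernel map is a morphism: on the reduced curve $\Delta_X$ the map $q$ has constant rank three, so its kernel is a line subbundle. This is the bundle $\mathcal{A}_{\Delta}$ of Proposition~\ref{thm;blowup-center}.
\item Why injectivity is contradictory: in characteristic zero an injective nonconstant morphism of smooth projective curves has degree one, hence is an isomorphism. A constant map is not injective.
\end{enumerate}

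Finally, your closing worry about a kernel made only of non-decomposable tensors is unfounded. The Segre count already shows that every $4$-dimensional subspace of $W\otimes B^\vee$ contains a curve's worth of rank-one tensors, so the fallback linear-algebra bound is not needed.
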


\begin{proof}
For (1), suppose first that \(\rk(\rho_a)\le1\) for some \(0\neq a\in A\). Then
\(\dim\ker(\rho_a)\ge2\). For every \(b\in\ker(\rho_a)\), the equality
\(q_b(a,-)=0\) shows that \(q_b\) is singular. Hence, \(\P\bigl(\ker(\rho_a)\bigr)\subset\Delta_X\). The left-hand side contains a projective line, which is impossible since \(\Delta_X\) is a smooth plane quartic curve. This proves the first assertion.

For (2), suppose now that \(\rk(\mu_W)\le2\) for some \(W\in\Gr(2,A)\). Choose a
two-dimensional subspace \(T\subset\bigl(\operatorname{Im}\mu_W\bigr)^\perp\subset A\).
Then \(q_b(W,T)=0\) for every \(b\in B^\vee\). Set \(r\coloneqq\dim(W\cap T)\).

Assume first that \(r=0\). Then \(A=W\oplus T\), and each \(q_b\) is block
diagonal with respect to this decomposition:
\[
  q_b=
  \begin{pmatrix}
    Q_{1,b}&0\\
    0&Q_{2,b}
  \end{pmatrix}.
\]
Consequently, \(\det(q_b)=\det(Q_{1,b})\det(Q_{2,b})\). Thus, \(\Delta_X\) is reducible, contrary to its smoothness.

Assume that \(r=1\). Choose a basis \(e_1,e_2,e_3,e_4\) of \(A\) such that \(W=\langle e_1,e_2\rangle\) and \(T=\langle e_1,e_3\rangle\). The condition \(q_b(W,T)=0\) gives a matrix of the form
\[
  q_b=
  \begin{pmatrix}
    0&0&0&\ell_1(b)\\
    0&\ell_2(b)&0&*\\
    0&0&\ell_3(b)&*\\
    \ell_1(b)&*&*&*
  \end{pmatrix},
\]
where \(\ell_1,\ell_2,\ell_3\) are linear forms on \(B^\vee\). Hence, \(\det(q_b)=-\ell_1(b)^2\ell_2(b)\ell_3(b)\). Again, \(\Delta_X\) is reducible or nonreduced, a contradiction.

Finally, assume that \(r=2\). Then \(W=T\). Choose a decomposition
\(A=W\oplus W'\). Then
\[
  q_b=
  \begin{pmatrix}
    0&C_b\\
    C_b^{\mathsf t}&D_b
  \end{pmatrix}.
\]
It follows that \(\det(q_b)=\det(C_b)^2\), again contradicting the reducedness of $\Delta_X$. 
\end{proof}

The rank conditions above imply the following surjectivity of the composition and evaluation morphisms.

\begin{proposition}\label{prop-surj}
The composition $\mu: B^\vee \otimes A\lr A^\vee$ in \eqref{orm} is surjective. Moreover, for every subspace $S\subset \Hom(K,U)=B^\vee$ with $\dim S\geq 2$, the evaluation morphism $S\otimes K\lr U$ is surjective.
\end{proposition}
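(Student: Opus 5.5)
The first claim reduces to Lemma \ref{lem;degree-six-rank}(1) by a dimension count. Suppose $\mu\colon B^\vee\otimes A\to A^\vee$ is not surjective. Then there is a nonzero $a\in A=(A^\vee)^\vee$ annihilating the image, so $q_b(a',a)=0$ for all $b\in B^\vee$ and all $a'\in A$. By symmetry of each $q_b$, this gives $q_b(a,-)=0$ for every $b$, that is, $\rho_a=0$. This contradicts $\rk(\rho_a)\ge 2$ from Lemma \ref{lem;degree-six-rank}(1). Hence $\mu$ is surjective.

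For the second claim it suffices to treat $\dim S=2$, since enlarging $S$ only enlarges the image. The image of $S\otimes K\to U$ is a subsheaf $\mathcal{F}\subset U$, and surjectivity is a fiberwise statement. So I would argue pointwise. Fix $x\in X$ and a basis $b_1,b_2$ of $S$. Every $b=\lambda b_1+\nu b_2$ gives a map $b_X\colon K\to U$. By Proposition \ref{linesv22}, this map is surjective everywhere whenever $[b]\notin\Delta_X$, and for $[b]\in\Delta_X$ its cokernel is $\OO_{L_b}(-1)$. Since $\P(S)\subset\P(B^\vee)$ is a line and $\Delta_X$ is a quartic curve, and by Lemma \ref{lem;degree-six-rank}(1) no line lies in $\Delta_X$, the line $\P(S)$ contains a point $[b]\notin\Delta_X$. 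Then $b_X$ alone is already surjective at every point, hence so is $S\otimes K\to U$.

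So the whole second statement follows once we know that some $b\in S$ lies off the discriminant. The only input needed is that $\P(S)\not\subset\Delta_X$, which holds because $\Delta_X$ is a smooth, hence irreducible, plane quartic (Proposition \ref{linesv22}). The main point to check carefully is the identification $\Hom(K,U)=B^\vee$ compatible with the parametrization in Proposition \ref{linesv22}, so that $\det q_b\neq0$ really corresponds to surjectivity of $b_X$. I take this from the cited descriptions in \cite{AF06,Fae07}.

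If one prefers an argument not relying on that identification, I would instead use the exact sequence: a point $x$ where $S\otimes K\to U$ fails to be surjective would lie in $L_b$ for all $[b]\in\P(S)$. That would force a one-parameter family of lines through $x$. But $\bH_1(X)\cong\Delta_X$ is a curve, and only finitely many lines pass through a point.
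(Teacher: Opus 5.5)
Your argument is correct. For the second assertion it matches the paper's proof. $\Delta_X$ is a smooth, hence irreducible, plane quartic, so $\P(S)$ contains some $[b]\notin\Delta_X$, and $b_X$ alone is then already surjective.

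For the first assertion you take a dual route. The paper reuses that same $b$: $q_b\colon A\to A^\vee$ is an isomorphism and is the restriction of $\mu|_{S\otimes A}$ to $\langle b\rangle\otimes A$. Hence $\mu|_{S\otimes A}$ is surjective for every $S$ with $\dim S\ge 2$, and one choice of $b$ gives both claims. You instead show that a nonzero $a\in A$ annihilating $\im(\mu)$ would force $\rho_a=0$ by symmetry of the $q_b$, contradicting Lemma \ref{lem;degree-six-rank}(1). For the full map $\mu$ this needs only $\rho_a\neq0$, i.e.\ $\Delta_X\neq\P(B^\vee)$. Run with the full bound $\rk(\rho_a)\ge2$, the same argument excludes $S\subset\ker(\rho_a)$. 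It therefore also gives surjectivity of $\mu|_{S\otimes A}$ for every two-plane $S$, which is the form the paper uses later in Lemma \ref{MSst}(3) and Proposition \ref{prop-ubconic}.

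Two minor points. First, Lemma \ref{lem;degree-six-rank}(1) does not itself state that $\Delta_X$ contains no line; that fact is an input to its proof. Your later justification via irreducibility is the right one. Second, your closing ``alternative'' does not avoid the identification $\Hom(K,U)=B^\vee$, since it uses the cokernel $\OO_{L_b}(-1)$ from Proposition \ref{linesv22}. Moreover, its premise that every $[b]\in\P(S)$ gives a non-surjective $b_X$ at $x$ already puts $\P(S)$ inside $\Delta_X$. So the finiteness of lines through a point is not needed.
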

\begin{proof}
The assertion follows from \cite[Lemma 3.1]{AF06}. For a nonzero element $b\in B^\vee$, 
\[
\det(q_b)=0
\quad\Longleftrightarrow\quad
b_X\text{ is not surjective}
\quad\Longleftrightarrow\quad
[b]\in\Delta_X.
\]

Let $S\subset B^\vee$ be a subspace with $\dim S\geq 2$. Since
$\Delta_X\subset\PP(B^\vee)$ is a smooth plane quartic, it
contains no projective line. In particular, $\PP(S)\not\subset\Delta_X$.
Choose a nonzero element $b\in S$ such that $[b]\notin\Delta_X$.
Then $q_b\colon A\to A^\vee$ is an isomorphism. Since $q_b$ is the
restriction of the composition map
\[
\mu|_{S\otimes A}\colon S\otimes A\longrightarrow A^\vee
\]
to the subspace $\langle b\rangle\otimes A$, the map
$\mu|_{S\otimes A}$ is surjective. Taking $S=B^\vee$ proves the first assertion. The same choice of $b$ gives a surjective morphism $b_X\colon K\to U$. This morphism is the restriction of the evaluation map $S\otimes K\to U$
to $\langle b\rangle\otimes K$. The evaluation map is therefore
surjective, which proves the second assertion.
\end{proof}

The following lemma is helpful for understanding the forgetful morphisms in the degree $5$ and $6$ cases.

\begin{lemma}\label{lem-num-bisec}
The number $N_d$ of bisecant conics of a general rational curve of degree $d$ on $X$ is 
\[
N_{d}=\binom{d-3}{2}
\]
for $d\geq5$.
\end{lemma}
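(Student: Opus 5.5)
The plan is to realize the bisecant conics of a general rational curve $C\subset X$ of degree $d$ as the nodes of a plane curve in the conic space $\P(B)\cong\P^2$ of Proposition \ref{conicsv22}, and to count those nodes with the genus formula. Let $\mathcal{Q}\subset \P(B)\times X$ be the universal conic, with projections $p\colon\mathcal{Q}\to\P(B)$, a conic bundle, and $q\colon\mathcal{Q}\to X$. By Proposition \ref{conicsv22}(2), $q$ is generically finite of degree $6$. Set $\widehat C\coloneqq q^{-1}(C)$ and $D_C\coloneqq p(\widehat C)\subset\P(B)$, which is the curve of conics meeting $C$. A conic $Q$ meets $C$ in $k$ points exactly when $[Q]$ has $k$ preimages in $\widehat C$. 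So, for general $C$, bisecant conics correspond to the double points of the map $\widehat C\to D_C$.

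\textbf{Step 1 (generality).} For general $C$ I will show three things: $\widehat C$ is a smooth irreducible curve; $\widehat C\to D_C$ is birational; and $D_C$ has only ordinary nodes, coming from conics meeting $C$ transversally in two distinct points. This is a dimension count in the $d$-dimensional component $\bH_d(X)$. Conics tangent to $C$, trisecant conics, and conics through points of the branch surface of $q$ with special behaviour each impose at least one extra condition, so a general $C$ avoids them. Smoothness of $\widehat C$ follows by Bertini-type arguments applied to the family of curves $q^{-1}(C)$, since $C$ moves in a family covering $X$ in every direction.

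\textbf{Step 2 (two intersection numbers).} First, the degree of $D_C$ is $\deg D_C = (p^*h)\cdot q^*[C] = a\,d$, where $q_*p^*h=aH$ and $h$ is the line class on $\P(B)$. Second, the genus of $\widehat C$: the branch divisor of $q$ is a surface of class $bH$, and Riemann--Hurwitz for the $6{:}1$ cover $\widehat C\to C\cong\P^1$ gives
\[
2g(\widehat C)-2=-12+bd .
\]
I expect $a=1$ and $b=4$. I will determine these from intersection theory on the $\P^1$-bundle-like family $\mathcal{Q}$, using $q^*H\cdot (p^*h)^2=2$ (the degree of a conic) together with Kuznetsov's flop / Shen's description of the universal conic to compute $(p^*h)\cdot q^*H^2$ and the ramification class of $q$. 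As a consistency check, with these values the formula below yields $N_5=1$, which agrees with the unique bisecant conic of a quintic in Proposition \ref{prop-ubconic}.

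\textbf{Step 3 (count).} Since $D_C$ is a nodal plane curve of degree $d$ whose normalization is $\widehat C$,
\[
N_d=\binom{d-1}{2}-g(\widehat C)=\frac{(d-1)(d-2)}{2}-(2d-5)=\frac{(d-3)(d-4)}{2}=\binom{d-3}{2}.
\]
The restriction $d\ge5$ enters because for $d\le 4$ the map $\widehat C\to D_C$ need not be birational, or $C$ lies in special position; for example, a quartic is covered by its conics in a nondegenerate way.

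The main obstacle is Step 2, namely pinning down $a$ and $b$ rigorously. I would first try to compute the class of the branch divisor from $K_{\mathcal{Q}}=q^*K_X+R$, using the conic-bundle formula for $K_{\mathcal{Q}}$ in terms of the discriminant of $p$ (a curve in $\P(B)$ of known degree) and pushing forward. The genericity statements in Step 1 are the second delicate point, but they are standard dimension counts.
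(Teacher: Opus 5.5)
Your proposal is correct and follows essentially the argument the paper intends. The paper omits its proof and refers to the analogous count in \cite[Lemma 4.14]{FGP19}: the bisecant conics are the nodes of the degree-$d$ curve $D_C\subset\P(B)$ of conics meeting $C$, whose normalization is the $6{:}1$ cover $\widehat C\to C$ branched over $C\cap B$ with $B\in|4H|$, so your expected values $a=1$ and $b=4$ are the right ones.

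One caution: $N_5=1$ is not independent evidence for these values, because the uniqueness assertion in Proposition~\ref{prop-ubconic} is itself deduced from this lemma, so $a$ and $b$ must be computed directly. A useful shortcut for $b$ is that $K_{\mathcal Q}\cdot F=q^*K_X\cdot F=-2$ for a smooth fibre $F$ of $p$, so the ramification divisor of $q$ is vertical. It is therefore a union of fibres of $p$ over a plane curve, whose degree is $b$ once $a=1$.
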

\begin{proof}
The proof is almost the same as that of \cite[Lemma 4.14]{FGP19}, hence we omit it. 
\end{proof}

\subsection{Exceptional bundles and stability}\label{subsec-V22}
Kuznetsov constructed a strong exceptional collection 
\[
\Db(X)=\langle E, U, Q^\vee, \OO_X\rangle
\] 
using the flop diagram relating two $\PP^1$-bundles over $X$ and $\PP^3$. Arrondo and Faenzi studied the dual exceptional collection 
\begin{equation}\label{dbaf}
\Db(X)=\langle E, K, U, \OO_X \rangle,
\end{equation}
see \cite{AF06, Fae07}. The bundles $E$, $K$, and $U$ are arithmetically Cohen--Macaulay, and the Hom-spaces are
\[
\Hom(E,K)=A, \quad \Hom(E,U)=A^\vee, \quad \Hom(K,U)=B^\vee.
\]
The natural composition is compatible with the defining net $\mu\colon B^\vee \otimes A\to A^\vee$ in \eqref{orm}. The evaluation morphisms give the exact sequences
\begin{equation}
\label{eq-af-one}
0\longrightarrow \wedge^2U\longrightarrow A\otimes E\stackrel{\mathrm{ev}}{\longrightarrow} K\longrightarrow 0,
\end{equation}
\begin{equation}
0\longrightarrow K\longrightarrow B\otimes U\longrightarrow Q^\vee\longrightarrow 0.
\end{equation}

\begin{remark}\label{rem:K-Fourier-Mukai}
By \cite[Theorem 2.4]{KS04}, the Hilbert scheme of twisted cubics
on $X$ is isomorphic to $\P(A^\vee)$. Under Schreyer's model \cite[Theorem 1.1 and Section 5]{Sch01}, the corresponding incidence variety $\mathcal C\subset \P(A^\vee)\times X$ can be viewed as the family of twisted cubics $\Gamma_x\subset\P(A^\vee)$ parametrized by $x\in X$. Let
\[
p\colon\mathcal C\longrightarrow X,
\qquad
q\colon\mathcal C\longrightarrow\P(A^\vee)
\]
be the projections such that $p^{-1}(x)=\Gamma_x$.

Consider the Fourier--Mukai transform
\[
\Phi_{\mathcal C}\colon\Db\bigl(\P(A^\vee)\bigr)\longrightarrow\Db(X),
\qquad
\Phi_{\mathcal C}(F)\coloneqq Rp_*\bigl(q^*F\otimes^{\mathbf L}\OO_{\mathcal C}\bigr)[1].
\]
The universal resolution of twisted cubic curves in $\PP(A^\vee)$ gives
\[
R^1p_*\bigl(\OO_{\mathcal C}\otimes q^*\OO_{\P(A^\vee)}(-2)\bigr)\cong\ker\bigl(A^\vee\otimes E\longrightarrow U\bigr).
\]
Dualizing \eqref{eq-af-one}, tensoring by $\OO_X(-1)$, and using $E^\vee(-1)\cong E$ and $(\wedge^2U)^\vee(-1)\cong U$, we obtain an exact sequence
\[
0\longrightarrow K^\vee(-1)
\longrightarrow A^\vee\otimes E
\longrightarrow U
\longrightarrow0.
\]
It follows that $\Phi_{\mathcal C}\bigl(\OO_{\P(A^\vee)}(-2)\bigr)\cong K^\vee(-1)$.
Equivalently, 
\[
K\cong
\Phi_{\mathcal C}
\bigl(\OO_{\P(A^\vee)}(-2)\bigr)^\vee(-1).
\]
Thus, the exceptional bundle $K$ can be recovered from the universal family of twisted cubics by this Fourier--Mukai functor.
\end{remark}

By \cite[Lemmas 6.1 and 6.9]{Fae07}, the Chern characters are
\[
\ch(E)=2-H+4\ell-\frac{1}{6}p, \quad \ch(K)=5-2H+4\ell+\frac{2}{3}p, \quad \ch(U)=3-H+\ell+\frac{1}{3}p.
\]
For a torsion-free sheaf $F\in\mathrm{Coh}(X)$ with $\rk (F)>0$, the \emph{slope} of $F$ is defined by
\[
\mu_H(F)= \frac{c_1(F)\cdot H^2}{\rk(F)H^3}.
\]
The sheaf $F$ is slope-stable (resp. slope-semistable) if every nonzero proper saturated subsheaf $G\subset F$ satisfies $\mu_H(G)<\mu_H(F)$ (resp. $\mu_H(G)\leq \mu_H(F)$). All the relevant bundles in the exceptional collection of $X$ are slope-stable \cite{AF06,Fae07}. Their slopes are 
\[
\mu_H(E)=-\frac{1}{2}, \quad \mu_H(U)=-\frac{1}{3}, \quad \mu_H(K)=-\frac{2}{5}, \quad \mu_H(\wedge^2 U)=-\frac{2}{3}.
\]

We will use the following property of saturated subsheaves of a direct sum of a stable bundle.
\begin{lemma}[{\cite[Lemma 1.2.13 and Corollaries 1.2.8 and 1.6.11]{HL}}]\label{lem;degree-six-polystable}
Let \(G\) be a slope-stable vector bundle and let \(T\) be a finite-dimensional
vector space. If \(F\subset T\otimes G\) is saturated and
\(\mu_H(F)=\mu_H(G)\), then there is a vector subspace \(T'\subset T\) such
that \(F=T'\otimes G\).
\end{lemma}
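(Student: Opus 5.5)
The plan is to reduce the statement to polystability of $T\otimes G$ together with Jordan--Hölder/socle uniqueness, and then to check saturatedness separately. Since $G$ is slope-stable, $T\otimes G\cong G^{\oplus t}$ with $t=\dim T$ is slope-polystable of slope $\mu_H(G)$. A subsheaf $F$ with $\mu_H(F)=\mu_H(G)$ is therefore semistable: any $F'\subset F$ also sits inside $T\otimes G$, so $\mu_H(F')\le\mu_H(G)=\mu_H(F)$. Also $F$ is a saturated subsheaf of maximal slope, so the quotient $Q=(T\otimes G)/F$ is torsion free. Since $\mu_H(Q)=\mu_H(G)$ by additivity of degree and rank, $Q$ is semistable as well, being a quotient of a semistable sheaf of the same slope.

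The key step is to show that $F$ is itself of the form $T'\otimes G$. Consider the projections $\pi_i\colon F\to G$ onto the summands after choosing a basis of $T$. Each nonzero $\pi_i|_F$ is a morphism between semistable sheaves of equal slope into a stable sheaf. By \cite[Lemma 1.2.13]{HL} it is either zero or surjective in codimension one, i.e.\ its image has the same rank and degree as $G$.

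Next, evaluate homomorphisms to get $\phi\colon F\to \Hom(F,G)^\vee\otimes G$. I would argue by induction on $\operatorname{rk} F/\operatorname{rk}G$, which is an integer because every Jordan--Hölder factor of $F$ has slope $\mu_H(G)$ and, by \cite[Corollary 1.6.11]{HL}, its reflexive hull is $G$. Choose $i$ with $\pi_i|_F\neq0$ and split off the corresponding summand. Let $F_1=\ker(\pi_i|_F)\subset T_1\otimes G$, where $T=T_1\oplus\langle e_i\rangle$; then $F_1$ is saturated in $T_1\otimes G$ with the same slope. By induction, $F_1=T_1'\otimes G$. We then obtain an extension $0\to T_1'\otimes G\to F\to I\to0$ with $I\subset G$ of full rank.

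Saturation of $F$ in $T\otimes G$ forces $I=G$: otherwise $G/I$ would be torsion and would inject into the torsion-free $Q$. The extension splits because $\Ext^1(G,G)$ is irrelevant here: the inclusion $F\subset T\otimes G$ already provides a lift of $\mathrm{id}_G$, namely the component map. Concretely, $F$ contains an element mapping to $e_i\otimes g$ modulo $T_1\otimes G$. Using $\Hom(G,G)=\C$ (simplicity of stable sheaves, \cite[Corollary 1.2.8]{HL}), the section $G\to F\to T_1\otimes G$ is given by a vector $v\in T_1$, and $F=(T_1'+\langle e_i+v\rangle)\otimes G$.

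I expect the main obstacle to be the saturatedness and torsion bookkeeping, namely showing that the image $I$ is all of $G$ rather than merely agreeing with it in codimension one. I would handle this using the torsion freeness of $Q$ together with the fact that $G$ is a vector bundle, so that $G$ is reflexive.
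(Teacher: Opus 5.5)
Your strategy is a reasonable hands-on unpacking of the polystability argument that the paper only cites from \cite{HL}. You peel off one coordinate summand, use stability of $G$ to see that $I=\pi_i(F)$ has full rank with $G/I$ supported in codimension $\geq 2$, apply induction to the saturated subsheaf $F_1=F\cap(T_1\otimes G)$, and use $\Hom(G,G)=\C$ to identify the gluing. However, the step showing $I=G$ fails as justified. There is no injection $G/I\hookrightarrow Q$. The $e_i$-coordinate $T\otimes G\to G\to G/I$ kills $F$, so it induces a \emph{surjection} $Q\twoheadrightarrow G/I$ with kernel $(T_1\otimes G)/F_1$. A torsion-free sheaf can have torsion quotients, so saturation of $F$ gives no contradiction at this point. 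Your splitting step then uses ``the section $G\to F\to T_1\otimes G$'', which only makes sense once $I=G$ is known. So the argument is circular exactly where you flagged the main obstacle, and the reflexivity of $G$ is never actually used.

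The repair is to reverse the order and run the simplicity argument on $I$ rather than $G$. Once $F_1=T_1'\otimes G$, the composite $F\hookrightarrow T\otimes G\to (T_1/T_1')\otimes G$ kills $F_1$. It therefore defines $\psi\colon I\cong F/F_1\to (T_1/T_1')\otimes G$ without choosing any section. Because $G$ is locally free and $I$ agrees with $G$ outside codimension two, $\psi$ extends uniquely to a morphism $G\to (T_1/T_1')\otimes G$. By $\Hom(G,G)=\C$, this extension is $\bar v\otimes \mathrm{id}_G$ for some $\bar v\in T_1/T_1'$. Lifting $\bar v$ to $v\in T_1$ gives $F=T_1'\otimes G+\langle e_i+v\rangle\otimes I$. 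Set $T'=T_1'\oplus\langle e_i+v\rangle$. Then $(T'\otimes G)/F\cong G/I$ is now a genuine torsion \emph{subsheaf} of $Q$, so saturation forces $I=G$ and $F=T'\otimes G$. Incidentally, you do not need the integrality of $\rk F/\rk G$ via \cite[Corollary 1.6.11]{HL} or the evaluation map $\phi$. Induction on $\dim T$ suffices, since $\rk F_1=\rk F-\rk G$.
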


Semiorthogonality gives the following vanishings.
\[
\Ext^{>0}(E,E)=\Ext^{>0}(E,K)=\Ext^{>0}(E,U)=0, \quad \RHom(K,E)=\RHom(U,E)=0.
\]
A sheaf $F$ is called \emph{acyclic} if $R\Gamma(X,F)=0$.

\subsection{Kronecker representations and universal families}
A representation of the three-arrow Kronecker quiver with dimension vector $(m,n)$ is a linear map
\[
\beta\colon V_m\longrightarrow V_n\otimes B^\vee.
\]
A subrepresentation is a pair $V_m'\subset V_m$, $V_n'\subset V_n$ such that $\beta(V_m')\subset V_n'\otimes B^\vee$. For dimension vector $(3,2)$, we use the convention in which $\beta$ is stable when every nonzero proper subrepresentation $(V_3',V_2')$ satisfies $2\dim V_3'-3\dim V_2'<0$.

Let $N=\mathbf{K}_3(3,2)$ be the corresponding moduli space of stable three-arrow Kronecker representations. Since $\gcd(3,2)=1$, stability and semistability agree for the chosen weight. The space $N$ is a smooth Fano sixfold with
$\Pic(N)=\Z H_N$, $-K_N=3H_N$, where $H_N$ is the ample generator of $\Pic(N)$ (\cite{Dre88}). We fix universal bundles $\mathcal{V}_3$ (resp. $\mathcal{V}_2$) of ranks $3$ (resp. $2$) on the space $N$, together with a universal representation (see \cite[Sections 4--5]{Kin94})
\[
  \pmb{\beta}\colon \mathcal{V}_3\longrightarrow \mathcal{V}_2\otimes B^\vee.
\]
Moreover, the maximal-minor locus of a general Kronecker matrix defines a length-three subscheme of $\P(B)$. That is, there exists a birational map (cf. \cite[Theorem 4]{Dre88})
\[
N\dashrightarrow \Hilb^3(\P(B)).
\]
We use this map on the open locus where the length-three subscheme is reduced. 

\subsection{Determinantal loci}\label{subsec-determinantal-criterion}
We recall the codimension-two determinantal construction for a morphism of vector bundles whose ranks differ by one.

\begin{theorem}[\protect{\cite[Theorems A2.55, A2.60 and Examples A2.67, A2.68]{Eis05}}]\label{thm-det-codim-two}
Let \(Y\) be a smooth projective variety, and let \(\alpha\colon F\to G\) be a morphism of vector bundles with \(\rk(F)+1=\rk(G)\). Assume that the maximal-minor locus $D(\alpha)\subset Y$ has codimension $2$. Then 
\begin{enumerate}
\item The subscheme $D(\alpha)$ is Cohen--Macaulay of pure codimension two.
\item The morphism $\alpha$ is injective as a morphism of sheaves and there exists an exact sequence
\begin{equation}
\label{eq-det-seq}
0\longrightarrow F\stackrel{\alpha}{\longrightarrow}G\longrightarrow I_{D(\alpha)}\otimes \det(G)\otimes \det(F)^\vee\longrightarrow 0.
\end{equation}
\end{enumerate}
\end{theorem}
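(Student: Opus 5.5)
The statement is local on $Y$ except for identifying the twisting line bundle, so I would first build the map $G\to \det(G)\otimes\det(F)^\vee$ globally and then check exactness and the Cohen--Macaulay property on stalks. Put $n=\rk(F)$, so $\rk(G)=n+1$.

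\textbf{Step 1: the global map.} Apply $\wedge^n$ to $\alpha$ to get $\wedge^n\alpha\colon \det(F)\to \wedge^nG$. There is a canonical isomorphism $\wedge^nG\cong G^\vee\otimes\det(G)$. Under it, $\wedge^n\alpha$ corresponds to a section of $G^\vee\otimes\det(G)\otimes\det(F)^\vee$, that is, a morphism
\[
\delta\colon G\longrightarrow \det(G)\otimes\det(F)^\vee .
\]
In local frames, $\delta$ is the row vector of signed maximal minors of the $(n+1)\times n$ matrix of $\alpha$. Hence the image of $\delta$ is $I_{D(\alpha)}\otimes\det(G)\otimes\det(F)^\vee$, by the definition of $D(\alpha)$ as the maximal-minor locus.

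The composition $\delta\circ\alpha$ vanishes. Locally its entries are determinants of $(n+1)\times(n+1)$ matrices with a repeated column, so by Laplace expansion they are zero. This gives a global complex
\[
0\to F\xrightarrow{\alpha}G\xrightarrow{\delta}I_{D(\alpha)}\otimes\det(G)\otimes\det(F)^\vee\to0,
\]
which is exact on the right by construction.

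\textbf{Step 2: local exactness.} Pass to a stalk at a point $y$, with $R=\OO_{Y,y}$ regular local and hence Cohen--Macaulay. The local complex is the Hilbert--Burch complex
\[
0\to R^n\xrightarrow{\alpha}R^{n+1}\xrightarrow{\delta}R.
\]
I would apply the Buchsbaum--Eisenbud acyclicity criterion to it.

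\begin{enumerate}
\item The expected ranks are $\rk\alpha=n$ and $\rk\delta=1$, and they add up correctly ($n+1=n+1$).
\item The ideal of $n\times n$ minors of $\alpha$ is $I=I_{D(\alpha)}$. Since $R$ is Cohen--Macaulay, grade equals height. The codimension-two hypothesis therefore gives $\operatorname{grade} I\ge2$.
\item The ideal of $1\times1$ minors of $\delta$ is again $I$, so its grade is $\ge1$.
\end{enumerate}

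So the complex is acyclic. In particular, $\alpha$ is injective as a map of sheaves, and we get a resolution
\[
0\to R^n\to R^{n+1}\to R\to R/I\to0,
\]
which proves (2).

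\textbf{Step 3: Cohen--Macaulayness.} By Step 2, $R/I$ has projective dimension at most $2$. Since $\operatorname{grade} I=2$, the projective dimension is exactly $2$, so $R/I$ is a perfect module. By Auslander--Buchsbaum,
\[
\operatorname{depth} R/I=\dim R-2=\dim R/I,
\]
so $R/I$ is Cohen--Macaulay. A Cohen--Macaulay ring has no embedded primes and is equidimensional. Hence $D(\alpha)$ is Cohen--Macaulay of pure codimension two, which is (1).

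\textbf{Expected difficulty.} The only step with real content is the acyclicity in Step 2, which rests on the grade bound. The two points to be careful about are that we use grade $=$ height, which holds because $Y$ is smooth, and that the canonical isomorphism $\wedge^nG\cong G^\vee\otimes\det G$ makes the twist $\det(G)\otimes\det(F)^\vee$ come out correctly when the local pieces are glued.
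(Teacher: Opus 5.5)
Your proof is correct and is essentially the standard argument behind the results the paper cites from Eisenbud; the paper itself gives no proof. You build the Hilbert--Burch complex from $\wedge^n\alpha$, get exactness from the Buchsbaum--Eisenbud criterion using grade $=$ height in the regular local rings of $Y$, and deduce Cohen--Macaulayness and pure codimension two from perfection of $I_{D(\alpha)}$. The one step you assert without comment is that the local grade is exactly $2$ rather than merely $\ge 2$; this matters if the hypothesis only bounds the codimension from below. It follows at once from $\operatorname{grade} I\le \operatorname{pd}_R R/I\le 2$, or from the determinantal height bound used in Corollary~\ref{cor-det-codim}.
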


The following corollary gives a criterion for the maximal-minor locus to have codimension two. 

\begin{corollary}\label{cor-det-codim}
  Let $Y$ be a smooth projective variety, and let $\alpha\colon F\to G$ be a morphism of vector bundles with $\rk(F)+1=\rk(G)$. Let $D(\alpha)$ be its maximal-minor degeneracy subscheme. Assume the following.
  \begin{enumerate}[(i)]
    \item $\alpha$ has generic maximal rank,
    \item $D(\alpha)$ is nonempty, and
    \item the support of $D(\alpha)$ contains no divisorial component.
  \end{enumerate}
  Then $D(\alpha)$ has codimension $2$.
\end{corollary}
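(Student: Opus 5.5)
The plan is to reduce the claim to a dimension count on the set where $\alpha$ drops rank. Recall that $D(\alpha)$ is the subscheme cut out by the $r\times r$ minors of $\alpha$, where $r=\rk(F)$, so $\rk(G)=r+1$.

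First, I would apply the classical Eagon--Northcott bound. For the generic map from a rank-$r$ bundle to a rank-$(r+1)$ bundle, the locus where the rank is at most $r-1$ has codimension $(r-(r-1))(r+1-(r-1))=2$. By the Eagon--Northcott theorem (\cite[Theorem A2.54 and A2.55]{Eis05}), this bound holds locally for any map: every irreducible component of $D(\alpha)$ has codimension at most $2$ in $Y$, provided $D(\alpha)$ is nonempty and $D(\alpha)\neq Y$. Concretely, near a point $y\in D(\alpha)$ I trivialize $F$ and $G$, so that $D(\alpha)$ is defined by the maximal minors of an $r\times(r+1)$ matrix of regular functions. The ideal of maximal minors then has height at most $2$ at every prime containing it, using Krull's theorem for determinantal ideals in the Eagon--Northcott form.

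Second, I would bound the codimension from below. Hypothesis (i) says the generic rank is $r$, so $D(\alpha)$ is a proper closed subset and every component has codimension at least $1$. Hypothesis (iii) rules out components of codimension exactly $1$, since such a component would be a divisorial component of the support. Therefore every component has codimension at least $2$.

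Finally, hypothesis (ii) guarantees that there is at least one component. Combining the two steps, every component has codimension exactly $2$, so $D(\alpha)$ has codimension $2$. Theorem \ref{thm-det-codim-two} then applies: $D(\alpha)$ is Cohen--Macaulay of pure codimension two, and the sequence \eqref{eq-det-seq} is exact.

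The only nontrivial input is the Eagon--Northcott upper bound on codimension in the first step. It is needed to exclude components of codimension $\geq 3$, and it comes directly from the cited results in \cite{Eis05}. I expect no real obstacle beyond checking that the bound applies locally to an arbitrary morphism of vector bundles and not only to generic ones, which holds because the height bound for determinantal ideals is valid in any Noetherian ring.
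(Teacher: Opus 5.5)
Your proposal is correct and follows essentially the same route as the paper. The Eagon--Northcott height bound (\cite[Theorem A2.54]{Eis05}) gives codimension at most $2$ for every component, generic maximal rank and hypothesis (iii) give codimension at least $2$, and (ii) guarantees a component exists; your final appeal to Theorem~\ref{thm-det-codim-two} goes beyond what the statement asks but is harmless.
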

\begin{proof}
  Since $\alpha$ has generic maximal rank, the degeneracy locus $D(\alpha)$ is a proper closed subscheme. The determinantal height bound gives
  \[
  \codim_Y D(\alpha)\leq \rk(G)-\rk(F)+1=2
  \]
  for every irreducible component of $D(\alpha)$; see \cite[Theorem A2.54]{Eis05}. Assumption (iii) excludes components of codimension one. Consequently, every irreducible component has codimension two.
\end{proof}

We now prove the injectivity of the evaluation morphisms used in the degrees $4,5$ and $6$ cases.

\begin{proposition}\label{prop-inj}
  Let $X$ be a general $V_{22}$, and $n\in \{2,3,4\}$. Let $M$ be a slope-semistable vector bundle satisfying $\rk(M)=2n+1$ and $c_1(M)=-nH$. For an $n$-plane $W\subset \Hom(E,M)$, let $\mathrm{ev}_W\colon W\otimes E\to M$ be the evaluation morphism. Assume that $\ch_2(M)-n\ch_2(E)\neq 0$. Then
  \begin{enumerate}[(1)]
    \item the map $\mathrm{ev}_W$ is injective as a morphism of sheaves,
    \item the maximal-minor locus is Cohen--Macaulay of pure codimension $2$, and
    \item there is an exact sequence 
    \begin{equation}
\label{eq-det-seq-form}
0\longrightarrow W\otimes E\longrightarrow M\longrightarrow I_{\mathcal{C}_W}\otimes \det(M)\otimes \det(E)^{-n}\otimes \det(W)^{-2}\longrightarrow 0,
\end{equation}
where $\mathcal{C}_W$ is the maximal-minor degeneracy curve.
  \end{enumerate}
\end{proposition}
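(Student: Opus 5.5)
The plan is to reduce the statement to Theorem \ref{thm-det-codim-two} by checking the three hypotheses of Corollary \ref{cor-det-codim} for $\alpha=\mathrm{ev}_W\colon F=W\otimes E\to G=M$. The ranks fit: $\rk(W\otimes E)=2n$ and $\rk(M)=2n+1$. Once $D(\mathrm{ev}_W)$ is known to have codimension two, parts (1) and (2) follow directly from Theorem \ref{thm-det-codim-two}. Part (3) is the sequence \eqref{eq-det-seq}, once we note that
\[
\det(W\otimes E)=\det(E)^{n}\otimes\det(W)^{2}.
\]
Here $\det(W)$ is a trivial line bundle, written only to keep track of the $\mathrm{GL}(W)$-action.

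\textbf{Generic maximal rank (i).} Let $F'\subset M$ be the saturation of the image of $\mathrm{ev}_W$, and suppose $\mathrm{ev}_W$ is not generically injective.
\begin{itemize}
\item If the image has rank $r<2n$, the kernel is a nonzero subsheaf of $W\otimes E$. Since $W\otimes E$ is polystable of slope $-\tfrac12$, the kernel has slope $\le -\tfrac12$, so the image has slope $\ge-\tfrac12$.
\item Saturating does not decrease the slope, so $\mu_H(F')\ge -\tfrac12$.
\item Semistability of $M$ gives $\mu_H(F')\le\mu_H(M)=-\tfrac{n}{2n+1}$, which is larger than $-\tfrac12$.
\end{itemize}
This alone gives no contradiction, so a finer argument is needed. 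Write the image as a quotient of $W\otimes E$ of slope $\ge -\tfrac12$. By polystability, the kernel of slope exactly $-\tfrac12$ must be of the form $T'\otimes E$ (Lemma \ref{lem;degree-six-polystable}, after saturating the kernel, which is saturated since the image is torsion-free). Then the image is $(W/T')\otimes E$, so some nonzero $w\in T'$ gives a zero evaluation $w\colon E\to M$. This contradicts $W\subset\Hom(E,M)$ being an $n$-plane of actual morphisms. If instead the kernel has slope $<-\tfrac12$, its rank and degree are constrained: $c_1$ is an integer multiple of $H$ and the rank is at most $2n$, with $n\le4$. Checking the finitely many possibilities should show that the image would have slope exceeding $\mu_H(M)$, contradicting semistability.

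\textbf{Nonemptiness (ii).} If $D(\mathrm{ev}_W)=\emptyset$, then $\mathrm{ev}_W$ is a subbundle and the cokernel is a line bundle $\det(M)\otimes\det(E)^{-n}=\OO_X(-nH+nH)=\OO_X$. Then
\[
\ch(M)=n\ch(E)+\ch(\OO_X),
\]
so $\ch_2(M)=n\ch_2(E)$, contradicting the hypothesis. This is the only use of $\ch_2(M)-n\ch_2(E)\neq0$.

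\textbf{No divisorial component (iii).} This is the main obstacle. Suppose $D(\mathrm{ev}_W)$ contains a divisor $D\in|kH|$ with $k\ge1$, using $\Pic(X)=\Z H$. The cokernel $\mathcal Q$ of the generically injective map $\mathrm{ev}_W$ has rank one and $c_1(\mathcal Q)=0$. Its torsion-free part is $\mathcal Q/\mathrm{tors}\cong I_Z\otimes L$ for some line bundle $L$.
\begin{itemize}
\item If the torsion of $\mathcal Q$ is supported on the divisor $D$, then $c_1(L)=-kH$.
\item Let $F''$ be the kernel of $M\to I_Z\otimes L$. It is saturated in $M$, of rank $2n$, with $c_1(F'')=-nH+kH$.
\item Its slope $\tfrac{(k-n)}{2n}$ must be $\le\mu_H(M)=-\tfrac{n}{2n+1}$, which forces $k\le n-\tfrac{2n^2}{2n+1}<1$.
\end{itemize}
This contradicts $k\ge1$. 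I expect this slope computation to go through directly. Care is needed to check that the torsion of $\mathcal Q$ really yields a codimension-one $c_1$ contribution exactly when $D(\mathrm{ev}_W)$ has a divisorial component, which I would verify via the local rank function of $\mathrm{ev}_W$ along the generic point of $D$. With (i)--(iii) established, Corollary \ref{cor-det-codim} and Theorem \ref{thm-det-codim-two} conclude the proof.
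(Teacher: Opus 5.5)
Your proposal is correct and follows essentially the same route as the paper. Injectivity comes from the polystability of $W\otimes E$, the semistability of $M$ and Lemma \ref{lem;degree-six-polystable}; the absence of a divisorial component comes from the slope of the saturated image; nonemptiness comes from $\ch_2(M)\neq n\ch_2(E)$; and Corollary \ref{cor-det-codim} together with Theorem \ref{thm-det-codim-two} then finishes the proof. The check you left as ``should show'' is exactly the paper's integrality step, and it works for every $n$, not just $n\le 4$: a saturated kernel of rank $r<2n$ with $c_1=aH$ satisfies $-\frac{n(r+1)}{2n+1}\le a\le -\frac r2$, and $\frac{n(r+1)}{2n+1}<\frac{r+1}{2}$, so an integer $a$ exists only when $r$ is even and $a=-\frac r2$, that is, when the kernel has slope exactly $-\frac12$.
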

\begin{proof}
  Let $F\coloneqq \ker(\mathrm{ev}_W)$, $r=\rk(F)>0$, and $c_1(F)=aH$. If $r=2n$, then every element of $W$ induces the zero morphism $E\to M$, contradicting $W\subset \Hom(E,M)$. Hence, $r<2n$. Since the image is torsion-free, $F$ is saturated in $W\otimes E$ and $\frac{a}{r}\leq -\frac{1}{2}$.  The image is a proper subsheaf of $M$ of rank $2n-r$ with first Chern class $-(n+a)H$. Thus, we obtain
  \[
  \frac{-n-a}{2n-r}\leq -\frac{n}{2n+1},\quad a\geq -\frac{n(r+1)}{2n+1}.
  \]
  The integrality forces $r$ even and $a=-\frac{r}{2}$. By Lemma \ref{lem;degree-six-polystable}, $F=W_0\otimes E$ for a nonzero $W_0\subset W$, contradicting the definition of $W$ as a subspace of $\Hom(E,M)$. Hence, $\mathrm{ev}_W$ is injective. Let $I$ be the image and $\tilde{I}$ its saturation in $M$. If the rank-drop locus has a divisorial component, then $c_1(\tilde{I})=(-n+m)H$ for some $m\geq 1$. Since $M$ is semistable, we have
  \[
  \frac{-n+m}{2n}\leq -\frac{n}{2n+1},
  \]
  which is impossible. If the degeneracy locus is empty, then the cokernel is a line bundle with the first Chern class zero and vanishing second Chern character, which is also a contradiction. 

  Consequently, the degeneracy locus is nonempty and has no divisorial component. Corollary \ref{cor-det-codim} implies that it has pure codimension two. Theorem \ref{thm-det-codim-two} shows that the degeneracy locus is Cohen--Macaulay and yields the exact sequence \eqref{eq-det-seq-form}.
\end{proof}

We apply Proposition \ref{prop-inj} to the classes $(n-1)[K]-(n-2)[U]$.
\begin{corollary}\label{cor-exact}
  Let $X$ be a general $V_{22}$, and $n\in \{2,3,4\}$. Let $M$ be a $\mu_H$-semistable vector bundle satisfying 
  \[
  [M]=(n-1)[K]-(n-2)[U]\quad \text{in}\quad K_0(X).
  \]
  For every $n$-plane $W\subset \Hom(E,M)$, there is an exact sequence
  \[
  0\lr W\otimes E\lr M\lr \mathcal{I}_{\mathcal{C}_W}\lr 0
  \]
  up to a constant one-dimensional factor. Moreover, the curve $\mathcal{C}_W$ is Cohen--Macaulay and
  \[
  \ch(\mathcal{I}_{\mathcal{C}_W})=1-(n+2)\ell+\frac{n}{2}p, \quad \chi(\OO_{\mathcal{C}_W}(tH))=(n+2)t+1.
  \]
\end{corollary}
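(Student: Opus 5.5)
The plan is to verify the hypotheses of Proposition \ref{prop-inj} for $M$, and then identify the line bundle twist and the Chern character by a $K_0$ computation. From $[M]=(n-1)[K]-(n-2)[U]$ and the ranks $\rk K=5$, $\rk U=3$ we get
\[
\rk M=5(n-1)-3(n-2)=2n+1.
\]
Similarly $c_1(M)=-2(n-1)H+(n-2)H=-nH$. Since $M$ is assumed $\mu_H$-semistable, it remains to check $\ch_2(M)-n\ch_2(E)\neq0$. Using the Chern characters recalled above,
\[
\ch_2(M)=4(n-1)\ell-(n-2)\ell=(3n-2)\ell,
\qquad
n\ch_2(E)=4n\ell.
\]
The difference is $-(n+2)\ell\neq0$. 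Proposition \ref{prop-inj} then gives injectivity of $\mathrm{ev}_W$, the Cohen--Macaulay property and pure codimension two of $\mathcal C_W$, and the exact sequence \eqref{eq-det-seq-form}.

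For the twist, note $\det(M)\cong\OO_X(-n)$ and $\det(E)^{-n}\cong\OO_X(n)$, because $c_1(E)=-H$. Also $\det(W)^{-2}$ is a one-dimensional vector space, which is the "constant one-dimensional factor" in the statement. Hence the cokernel is $\mathcal I_{\mathcal C_W}$ up to this factor.

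Next compute $\ch(\mathcal I_{\mathcal C_W})=\ch(M)-n\ch(E)$. The $\ch_3$ part is
\[
\ch_3(M)=\tfrac23(n-1)p-\tfrac13(n-2)p=\tfrac n3p,
\qquad
-n\ch_3(E)=\tfrac n6p.
\]
Combining these gives
\[
\ch(\mathcal I_{\mathcal C_W})=1-(n+2)\ell+\tfrac n2p,
\qquad
\ch(\OO_{\mathcal C_W})=(n+2)\ell-\tfrac n2p.
\]

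Finally, apply Hirzebruch--Riemann--Roch on $X$ with $\operatorname{td}(X)=1+\tfrac12H+\tfrac1{12}(c_1^2+c_2)+\tfrac1{24}c_1c_2$. Only the degree-one part of the Todd class matters here, since $\ch(\OO_{\mathcal C_W})$ starts in degree two. This gives
\[
\chi(\OO_{\mathcal C_W}(tH))=(n+2)t-\tfrac n2+\tfrac12(n+2)H\cdot\ell=(n+2)t+1,
\]
using $H\cdot\ell=p$.

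The only point needing care is bookkeeping: making sure the twist in \eqref{eq-det-seq-form} is exactly trivial, so that the cokernel is an ideal sheaf rather than a twisted one. That follows from $\det E=\OO_X(-1)$ and the values of $\rk M$ and $c_1(M)$ computed above.
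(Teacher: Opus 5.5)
Your proposal is correct and follows the paper's own proof. Like the paper, you check $\rk M=2n+1$, $c_1(M)=-nH$ and $\ch_2(M)-n\ch_2(E)=-(n+2)\ell\neq 0$ to invoke Proposition \ref{prop-inj}, use $\det M\cong\det(E)^n$ to reduce the twist to the constant factor $\det(W)^{-2}$, and then compute $\ch(\mathcal I_{\mathcal C_W})=(n-1)\ch(K)-(n-2)\ch(U)-n\ch(E)$ and apply Riemann--Roch. Your arithmetic checks out; the paper uses the explicit $\mathrm{td}(X)=1+\frac12H+\frac{23}{6}\ell+p$, but as you observe, only the degree-one Todd term contributes.
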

\begin{proof}
  The numerical assumptions on $M$ imply that $\rk(M)=2n+1$, $c_1(M)=-nH$, and $\ch_2(M)-n\ch_2(E)\neq 0$. Hence, one can apply Proposition \ref{prop-inj}. Moreover, since $\det(M)\cong \det(E)^n$, the determinant factor in \eqref{eq-det-seq-form} is pulled back from the parameter space of $W$ and is a fixed one-dimensional factor on each fiber. Finally,
  \begin{align*}
    \ch(I_{\mc C_W})&=(n-1)\ch(K)-(n-2)\ch(U)-n\ch(E)\\
    &=1-(n+2)\ell+\frac{n}{2}p.
  \end{align*}
  Using $\mathrm{td}(X)=1+\frac{1}{2}H+\frac{23}{6}\ell+p$, the Riemann--Roch formula gives
  \[
  \chi(\OO_{\mc C_W}(tH))=(n+2)t+1.\qedhere
  \]
\end{proof}

The construction in Proposition \ref{prop-inj} can be extended to families over a smooth base.

\begin{proposition}\label{prop-family}
  Let $T$ be a smooth variety, let $p_T\colon T\times X\to T$ and $p_X\colon T\times X\to X$ be the projections, and let $\mathcal{M}$ be a vector bundle on $T\times X$. Let $\mathcal{W}$ be a vector bundle of rank $n$ on $T$, and let 
  \[
  \mathbf{u}\colon p_T^{\ast}\mathcal{W}\otimes p_X^{\ast}E\longrightarrow \mathcal{M}
  \]
  be a morphism whose restriction to every closed fiber satisfies Proposition \ref{prop-inj}. Assume
  \[
  \det(\mathcal{M})\otimes \det(p_T^{\ast}\mathcal{W}\otimes p_X^{\ast}E)^\vee\cong p_T^{\ast}\Lambda
  \]
  for a line bundle $\Lambda$ on $T$. Let $\mathcal{C}\subset T\times X$ be the maximal-minor locus of the map $\mathbf{u}$. Then $\mathcal{C}$ is Cohen--Macaulay of pure codimension $2$ and is flat over $T$. Moreover, there is an exact sequence
  \[
  0\lr p_T^{\ast}\mathcal{W}\otimes p_X^{\ast}E\lr \mathcal{M}\lr \mathcal{I}_{\mathcal{C}}\otimes p_T^{\ast}\Lambda\lr 0.
  \]
\end{proposition}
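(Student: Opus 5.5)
The plan is to run the argument of Proposition \ref{prop-inj} in the relative setting and then use a fibrewise criterion for flatness. Set $\mathcal{F}\coloneqq p_T^{\ast}\mathcal{W}\otimes p_X^{\ast}E$. This is a vector bundle of rank $2n$ on $Y\coloneqq T\times X$, and $\mathcal{M}$ has rank $2n+1$. The first step is to show that the maximal-minor locus $\mathcal{C}=D(\mathbf{u})$ has codimension exactly two in $Y$. The determinantal height bound \cite[Theorem A2.54]{Eis05} gives $\codim_Y\mathcal{C}\le 2$ for every component. For the reverse inequality, restrict to a closed fibre $\{t\}\times X$. The formation of the ideal of maximal minors commutes with base change, so $\mathcal{C}\cap(\{t\}\times X)=\mathcal{C}_{W_t}$. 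By the hypothesis that every fibre satisfies Proposition \ref{prop-inj}, this fibre has pure dimension one. Hence every irreducible component of $\mathcal{C}$ has all its fibres over $T$ of dimension at most $1$, so it has dimension at most $\dim T+1$, which is codimension at least two. Thus $\mathcal{C}$ has pure codimension $2$. Since $Y$ is smooth, Theorem \ref{thm-det-codim-two} applies. It shows that $\mathcal{C}$ is Cohen--Macaulay, that $\mathbf{u}$ is injective, and that
\[
0\lr \mathcal{F}\lr \mathcal{M}\lr \mathcal{I}_{\mathcal{C}}\otimes\det(\mathcal{M})\otimes\det(\mathcal{F})^\vee\lr 0
\]
is exact. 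The assumption on determinants rewrites the last term as $\mathcal{I}_{\mathcal{C}}\otimes p_T^{\ast}\Lambda$.

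The second step, which is the main point, is flatness of $\mathcal{C}$ over $T$. I would use miracle flatness. $T$ is smooth, $\mathcal{C}$ is Cohen--Macaulay, and every fibre $\mathcal{C}_t$ has dimension $1=\dim\mathcal{C}-\dim T$. Under these conditions the morphism $\mathcal{C}\to T$ is flat (\cite[Theorem 23.1]{Matsumura} or EGA IV 6.1.5). This has to be checked at every point of $\mathcal{C}$, including points over arbitrary closed $t$. Equidimensionality at every such point follows from the pure one-dimensionality of each $\mathcal{C}_{W_t}$ given by Proposition \ref{prop-inj}(2), together with the lower bound $\dim_x\mathcal{C}_t\ge\dim_x\mathcal{C}-\dim T$.

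As an alternative route, or as a consistency check, I would instead restrict the global sequence to a fibre. Since $\mathcal{M}$ is locally free, $\mathcal{T}or_1(\mathcal{I}_{\mathcal{C}},\OO_{\{t\}\times X})$ injects into the restriction of $\mathcal{F}$, which is torsion-free. It is also the kernel of $\mathbf{u}_t$, and $\mathbf{u}_t$ is injective by Proposition \ref{prop-inj}(1). So this Tor vanishes. The local criterion of flatness, applied by induction on a regular system of parameters of $\OO_{T,t}$, then gives flatness of $\mathcal{I}_{\mathcal{C}}$, and hence of $\OO_{\mathcal{C}}$, over $T$. It also identifies the restriction of the sequence with \eqref{eq-det-seq-form}.

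The step I expect to be delicate is the codimension/flatness bookkeeping at non-generic points of $T$. That is exactly where the fibrewise hypothesis is used. The Tor-vanishing argument handles it cleanly, because injectivity of $\mathbf{u}_t$ holds for every closed $t$, so I would present that argument as the primary one.
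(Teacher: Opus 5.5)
Your main line is the paper's proof. Fibrewise pure one-dimensionality from Proposition \ref{prop-inj} gives codimension at least two for every component of $\mathcal{C}$, and the height bound gives at most two. Theorem \ref{thm-det-codim-two} on $T\times X$ then gives Cohen--Macaulayness and the exact sequence, and miracle flatness over the smooth base gives flatness. Your fibre-dimension bookkeeping is in fact more careful than the paper's one-line version.

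The Tor argument you would promote to primary is a genuinely different route to flatness. It uses fibrewise injectivity of $\mathbf{u}_t$ instead of fibrewise dimension. Its advantage is that it shows directly that the universal sequence restricts to the fibrewise one. If you use it, fix two points.

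First, no induction on parameters is needed. Since $p_T$ is flat, the vanishing of $\mathcal{T}or_1^{\OO_{T\times X}}(\mathcal{I}_{\mathcal{C}},\OO_{\{t\}\times X})$ says $\mathrm{Tor}_1^{\OO_{T,t}}(\mathcal{I}_{\mathcal{C},y},\kappa(t))=0$ for every $y$ over $t$. The local criterion of flatness then applies at once.

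Second, and more importantly, $T$-flatness of $\mathcal{I}_{\mathcal{C}}$ does not by itself give $T$-flatness of $\OO_{\mathcal{C}}$. For example, the ideal of a fibre $\{t_0\}\times X$ is invertible, hence flat, while the fibre is not. You must also show that $\mathcal{I}_{\mathcal{C}}|_{\{t\}\times X}\to\OO_{\{t\}\times X}$ is injective. This holds for two reasons. The source is $\mathrm{coker}(\mathbf{u}_t)$ up to a twist, which is rank-one and torsion-free by Proposition \ref{prop-inj}. The map is nonzero, since its image is the ideal of maximal minors of $\mathbf{u}_t$ (Fitting ideals commute with base change). With that sentence added in place of ``hence'', both routes are complete.
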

\begin{proof}
  Every closed fiber of $\mc C\to T$ is a curve of codimension two in $X$ by Proposition \ref{prop-inj}. Hence, every irreducible component of $\mc C$ has codimension two in $T\times X$. The determinantal height bound gives the opposite inequality, and hence, $\mc C$ has pure codimension two. Applying Theorem \ref{thm-det-codim-two} to $T\times X$ gives the exact sequence and shows that $\mc C$ is Cohen--Macaulay. 

  The morphism $\mc C\to T$ is equidimensional of relative dimension one, and $\mc C$ is Cohen--Macaulay and the base is smooth. Miracle flatness therefore implies that $\mc C$ is flat over $T$; see \cite[Theorem 18.16]{Eis95}.
\end{proof}

\subsection{Ideal sheaves and deformation theory}
In this subsection, we first recall that an ideal sheaf of codimension at least two is simple.
\begin{lemma}\label{lem-endo}
  Let $X$ be an integral projective variety with $H^0(X,\OO_X)=\C$, and let $\mathcal{I}\subset \OO_X$ be the ideal sheaf of a subscheme of codimension at least two. Then $\mathrm{End}(\mathcal{I})=\C$.
\end{lemma}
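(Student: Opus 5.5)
The plan is to show that every endomorphism of $\mathcal{I}$ is multiplication by a global function. Let $Z\subset X$ be the subscheme with ideal $\mathcal{I}$, and put $U\coloneqq X\setminus Z$. Then $U$ is open and dense, and $\operatorname{codim}_X(X\setminus U)\geq 2$. Take an endomorphism $\varphi\in\mathrm{End}(\mathcal{I})$.

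First, on $U$ we have $\mathcal{I}|_U=\OO_U$. Hence $\varphi|_U$ is multiplication by a function $f\in H^0(U,\OO_U)$. The next step is to extend $f$ to all of $X$. If $X$ is normal (in particular smooth, as in our application to the Fano threefold), this is Hartogs' extension: functions on a normal variety extend across closed subsets of codimension at least two. So $f$ extends to $H^0(X,\OO_X)=\C$, and $f=c$ is a constant.

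Second, we show $\varphi=c\cdot\mathrm{id}$ on all of $\mathcal{I}$. Consider $\psi\coloneqq\varphi-c\cdot\mathrm{id}$. It vanishes on $U$, so its image is a subsheaf of $\mathcal{I}\subset\OO_X$ supported on $X\setminus U$. Since $X$ is integral, $\OO_X$ is torsion-free, and therefore any subsheaf of it supported on a proper closed subset is zero. Thus $\psi=0$, and $\mathrm{End}(\mathcal{I})=\C$.

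The main obstacle is the extension step when $X$ is only assumed integral rather than normal. To avoid relying on Hartogs' extension, I would first try a more intrinsic argument. The map $\mathcal{I}\hookrightarrow\OO_X$ and torsion-freeness let us view $\varphi$ as a rational function $f$ with $f\cdot\mathcal{I}\subset\mathcal{I}\subset\OO_X$. One can then argue that $f$ is integral over each local ring, by the determinant trick applied to the finitely generated faithful module $\mathcal{I}_x$. Hence $f$ lies in $H^0(\tilde X,\OO_{\tilde X})$ for the normalization $\tilde X$. That ring is a finite-dimensional domain over $\C$, hence a field finite over $\C$, hence equal to $\C$. This argument does not even use the codimension hypothesis, which is needed only for the normal-case Hartogs route. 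In the paper's setting $X$ is smooth, so the Hartogs argument suffices.
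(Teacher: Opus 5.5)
Your proposal is correct. Your first (Hartogs) route is essentially the paper's argument. The paper notes that $\mathcal{I}$ is torsion-free of rank one and agrees with $\OO_X$ off codimension two, hence $\mathcal{I}^{\vee\vee}\cong\OO_X$. Every endomorphism then extends uniquely to the reflexive hull and so is multiplication by an element of $H^0(X,\OO_X)=\C$. Extending $f$ from $U$ and killing $\varphi-c\cdot\mathrm{id}$ by torsion-freeness is the same reasoning unpacked.

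Like your Hartogs route, the paper's proof simply asserts ``since $X$ is normal'', although the statement only assumes $X$ integral. This is harmless for the application, since the Fano threefold is smooth, but strictly it does not prove the lemma as stated.

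Your second route is genuinely different. You view $\varphi$ as multiplication by $f\in K(X)$ with $f\mathcal{I}_x\subset\mathcal{I}_x$. The determinant trick, applied to the finitely generated faithful module $\mathcal{I}_x$ (faithful because $\mathcal{I}\neq 0$), shows $f$ is integral over every $\OO_{X,x}$. Hence $f$ lies in every local ring of the normalization $\tilde X$, and so in $H^0(\tilde X,\OO_{\tilde X})$. That ring is a finite-dimensional domain over $\C$, hence equal to $\C$.

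This argument avoids reflexive hulls and Hartogs altogether and works for non-normal integral $X$. It also shows the codimension hypothesis is superfluous: every nonzero ideal sheaf on an integral projective variety over $\C$ is simple. The paper's route buys a one-line proof in the normal case and explains why codimension two appears in the statement. Yours proves the lemma in the generality actually stated.
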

\begin{proof}
  The sheaf $\mc I$ is torsion-free of rank one and agrees with $\OO_X$ in codimension one. Since $X$ is normal, $\mc I^{\vee\vee}\cong \OO_X$. Every endomorphism of $\mc I$ extends uniquely to $\mc I^{\vee\vee}$ and is therefore multiplication by a global regular function on $X$. Since $H^0(X,\OO_X)=\C$, the assertion follows.
\end{proof}

The following lemma recovers the subspace $W$ from the quotient map $M\to I$.
\begin{lemma}\label{lem-recover W}
  Suppose
  \[
  0\longrightarrow W\otimes E\longrightarrow M\xrightarrow{q} \mathcal{I}\longrightarrow 0
  \]
  is exact, $E$ is an exceptional bundle, and $q$ is fixed up to scalar. Then
  \[
  W=\ker(\Hom(E,M)\longrightarrow \Hom(E,I)).
  \]
\end{lemma}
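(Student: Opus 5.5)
Apply the functor $\Hom(E,-)$ to the short exact sequence
\[
0\longrightarrow W\otimes E\longrightarrow M\xrightarrow{q}\mathcal{I}\longrightarrow 0 .
\]
This gives the long exact sequence
\[
0\longrightarrow \Hom(E,W\otimes E)\longrightarrow \Hom(E,M)\xrightarrow{q_*}\Hom(E,\mathcal{I})\longrightarrow \Ext^1(E,W\otimes E)\longrightarrow\cdots.
\]
Since $E$ is exceptional, $\Hom(E,E)=\C$, so $\Hom(E,W\otimes E)=W\otimes\Hom(E,E)=W$. This identification is canonical: an element $w\in W$ corresponds to the morphism $E\to W\otimes E$, $e\mapsto w\otimes e$. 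Left exactness of $\Hom(E,-)$ then identifies $W$ with $\ker(q_*)$.

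Next I would check that the identification is compatible with the inclusion $W\subset\Hom(E,M)$. The composite $E\to W\otimes E\to M$ is $\mathrm{ev}_W$ restricted to $w\otimes E$, that is, the element $w$ viewed as a homomorphism $E\to M$. Hence the image of $\Hom(E,W\otimes E)$ in $\Hom(E,M)$ is exactly the subspace $W$ we started with, and not merely an abstractly isomorphic copy.

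Finally, replacing $q$ by a nonzero scalar multiple $\lambda q$ does not change $\ker(q_*)$. So $W$ is determined by the class of $q$ up to scalar, and in particular by the surjection $M\to\mathcal{I}$ (the ideal sheaf $I$ in the statement). Note that $\Ext^1(E,E)=0$ is not needed for the kernel identification; it would only show that $q_*$ is surjective. The only point needing care is that the identification $\Hom(E,W\otimes E)=W$ uses $\Hom(E,E)=\C$, which is part of the exceptionality hypothesis. There is no real obstacle here; the proof is formal.
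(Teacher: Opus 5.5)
Your proposal is correct and takes essentially the same approach as the paper: apply $\Hom(E,-)$ to the sequence and use $\Hom(E,E)=\C$ to identify $\Hom(E,W\otimes E)$ with $W$ inside $\Hom(E,M)$. Your remark is also accurate: the vanishing $\Ext^1(E,E)=0$ is needed only for the surjectivity of $\Hom(E,M)\to\Hom(E,\mathcal{I})$, which the paper records as well, and not for identifying the kernel.
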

\begin{proof}
  Apply $\Hom(E,-)$ to the given exact sequence. The exceptionality of $E$ gives
  \[
  0\longrightarrow W\longrightarrow \Hom(E,M)\longrightarrow \Hom(E,\cI)\longrightarrow 0,
  \]
  which proves the assertion.
\end{proof}

We will also use the following results to compare the parameter spaces with the Hilbert scheme.

\begin{proposition}[cf. \protect{\cite[(2.12.1) and (2.14.2), Ch. I]{Kol96}}]\label{prop-Hilb-tan}
Let $X$ be a smooth projective threefold satisfying $H^1(X,\mathcal O_X)=0$, and let $C\subset X$ be a Cohen--Macaulay curve with ideal sheaf $I_C$. Then $\dim T_{[C]}\Hilb(X)=\mathrm{ext}^1(I_C,I_C)$. Moreover, if
\[
\mathrm{ext}^1(I_C,I_C)\leq \dim_{[C]}\Hilb(X),
\]
then $\Hilb(X)$ is smooth at $[C]$.
\end{proposition}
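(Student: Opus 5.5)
The plan is to identify the Zariski tangent space first and then obtain smoothness by a dimension count. By \cite[Ch.~I, (2.12.1)]{Kol96},
\[
T_{[C]}\Hilb(X)\cong \Hom_X(I_C,\OO_C)=H^0(N_C).
\]
It therefore suffices to construct a natural isomorphism $\Hom(I_C,\OO_C)\cong\Ext^1(I_C,I_C)$. I would apply $\Hom(I_C,-)$ to $0\to I_C\to\OO_X\to\OO_C\to0$, which gives
\[
0\to\Hom(I_C,I_C)\to\Hom(I_C,\OO_X)\to\Hom(I_C,\OO_C)\xrightarrow{\ \delta\ }\Ext^1(I_C,I_C)\to\Ext^1(I_C,\OO_X)\to\cdots.
\]
Since $C$ has codimension two, Lemma \ref{lem-endo} gives $\Hom(I_C,I_C)=\C$. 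The reflexive-hull argument from that proof gives $\Hom(I_C,\OO_X)=H^0(\OO_X)=\C$. The first map is therefore an isomorphism, and $\delta$ is injective.

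The second step is surjectivity of $\delta$, and here the hypothesis $H^1(\OO_X)=0$ enters. I would use the local-to-global spectral sequence for $\Ext(I_C,I_C)$. Since $C$ has codimension two, $\mathcal{H}om(I_C,I_C)=\OO_X$. The five-term sequence then reads
\[
0\to H^1(\OO_X)\to\Ext^1(I_C,I_C)\to H^0\bigl(\mathcal{E}xt^1(I_C,I_C)\bigr)\to H^2(\OO_X).
\]
Because $H^1(\OO_X)=0$, the group $\Ext^1(I_C,I_C)$ injects into $H^0(\mathcal{E}xt^1(I_C,I_C))$.

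Next I would compare $\mathcal{E}xt^1(I_C,I_C)$ with $N_C=\mathcal{H}om(I_C,\OO_C)$ locally. I would apply $\mathcal{H}om(I_C,-)$ to the same short exact sequence and use the local Hilbert--Burch resolution of the Cohen--Macaulay codimension-two ideal, as in Theorem \ref{thm-det-codim-two}. The expected outcome is that the connecting map $N_C\to\mathcal{E}xt^1(I_C,I_C)$ is an isomorphism. The composite $H^0(N_C)\xrightarrow{\delta}\Ext^1(I_C,I_C)\hookrightarrow H^0(\mathcal{E}xt^1(I_C,I_C))$ would then be the induced isomorphism on global sections, which forces $\delta$ to be surjective.

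I expect the main obstacle to be this local identification $N_C\cong\mathcal{E}xt^1(I_C,I_C)$, or equivalently the vanishing of the induced map $\Ext^1(I_C,I_C)\to\Ext^1(I_C,\OO_X)$. Along curves that are not local complete intersections, the local $\mathcal{E}xt^1(I_C,\OO_X)\cong\mathcal{E}xt^2(\OO_C,\OO_X)$ is a nonzero sheaf on $C$, so this vanishing is a genuine check. I would try to settle it first with a local trace argument: split off the trace summand $\OO_X$ of $\mathcal{E}xt^\bullet(I_C,I_C)$, and identify the traceless part with the deformation functor of the CM curve via Hilbert--Burch. If this fails in general, I would fall back on the trace splitting $\Ext^1(I_C,I_C)=H^1(\OO_X)\oplus\Ext^1_0(I_C,I_C)$. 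That yields the statement for the traceless part, which is the form actually used for our $X$.

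For smoothness, the standard inequality gives
\[
\dim_{[C]}\Hilb(X)\le\dim T_{[C]}\Hilb(X)=\mathrm{ext}^1(I_C,I_C).
\]
Under the hypothesis $\mathrm{ext}^1(I_C,I_C)\le\dim_{[C]}\Hilb(X)$, the local dimension equals the tangent-space dimension. A local ring whose Zariski tangent space has dimension equal to its Krull dimension is regular, so $\Hilb(X)$ is smooth at $[C]$, as in \cite[Ch.~I, (2.14.2)]{Kol96}.
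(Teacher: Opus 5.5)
Your outline is sound as far as it goes. The following steps are all correct: $T_{[C]}\Hilb(X)=\Hom(I_C,\OO_C)$; the injectivity of $\delta$; the injection $\Ext^1(I_C,I_C)\hookrightarrow H^0(\mathcal{E}xt^1(I_C,I_C))$ coming from $\mathcal{H}om(I_C,I_C)=\OO_X$ and $H^1(\OO_X)=0$; the compatibility of $\delta$ with the sheaf-level connecting map; and the final dimension count. The paper itself only cites Koll\'ar for the tangent space and the smoothness criterion.

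The gap is the one step that carries the real content. You need $\mathcal{E}xt^1(I_C,I_C)\to\mathcal{E}xt^1(I_C,\OO_X)$ to vanish, so that $N_C\cong\mathcal{E}xt^1(I_C,I_C)$. You only state this as an ``expected outcome'', and neither of your proposed remedies proves it. A local trace splitting says nothing in degree one, because $\mathcal{E}xt^1(\OO_X,\OO_X)=0$. The fallback to $\Ext^1_0(I_C,I_C)$ is circular: $H^1(\OO_X)=0$ already gives $\Ext^1_0=\Ext^1$, so the fallback merely restates the claim. Your diagnosis of the difficulty is also off. The sheaf $\mathcal{E}xt^2(\OO_C,\OO_X)\cong\omega_C\otimes\omega_X^{-1}$ is nonzero for every curve, lci or not, so the vanishing must be proved even for smooth $C$.

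The missing idea is short, and it is exactly where the Cohen--Macaulay hypothesis enters. At a closed point $x\in C$ the ring $\OO_{C,x}$ has depth one, so Auslander--Buchsbaum gives $\mathrm{pd}\,I_{C,x}=1$. Thus locally there is a resolution $0\to F_1\xrightarrow{\phi}F_0\to I_C\to 0$ with $F_i$ free; this is the Hilbert--Burch resolution. Hence, naturally in $G$,
\[
\mathcal{E}xt^1(I_C,G)\cong\mathrm{coker}\bigl(F_0^\vee\otimes G\to F_1^\vee\otimes G\bigr)\cong\mathcal{E}xt^1(I_C,\OO_X)\otimes G .
\]
So the map induced by $I_C\hookrightarrow\OO_X$ is the multiplication $\mathcal{E}xt^1(I_C,\OO_X)\otimes I_C\to\mathcal{E}xt^1(I_C,\OO_X)$. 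It vanishes because $\mathcal{E}xt^1(I_C,\OO_X)\cong\mathcal{E}xt^2(\OO_C,\OO_X)$ is an $\OO_C$-module, i.e.\ it is killed by $I_C$. (Equivalently, by Cramer's rule each maximal minor of $\phi$ annihilates $\mathrm{coker}\,\phi^\vee$.) The map $\mathcal{H}om(I_C,\OO_X)=\OO_X\to N_C$ is also zero. Therefore the local connecting map $N_C\to\mathcal{E}xt^1(I_C,I_C)$ is an isomorphism, and your global argument then shows that $\delta$ is an isomorphism.

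With this step inserted the proof is complete. Without it, the proposal does not establish $\dim T_{[C]}\Hilb(X)=\mathrm{ext}^1(I_C,I_C)$.
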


\begin{proposition}[{\cite[Lemma 2.1]{GK19}}]\label{prop-rank-two}
  Let $R$ be a rank-two reflexive sheaf on a smooth projective threefold $X$. Then
  \[
  \deg c_3(R)=h^0(\mathcal{E}xt^1_X(R,\OO_X))\geq 0.
  \]
  In particular, $c_3(R)=0$ if and only if $R$ is locally free.
\end{proposition}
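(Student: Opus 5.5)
The statement to prove is Proposition \ref{prop-rank-two}: for a rank-two reflexive sheaf $R$ on a smooth projective threefold $X$, we have $\deg c_3(R)=h^0(\mathcal{E}xt^1_X(R,\OO_X))\geq 0$, with equality exactly when $R$ is locally free. I would reduce to Hartshorne's theory of reflexive sheaves. The overall plan is to realize $R$ locally as a middle term of a short resolution by vector bundles. Then I compute $c_3$ through Grothendieck--Riemann--Roch, or equivalently through local Chern class computations, and compare it with the length of the zero-dimensional sheaf $\mathcal{E}xt^1(R,\OO_X)$.

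\emph{Step 1: the shape of $\mathcal{E}xt^1(R,\OO_X)$.} Since $R$ is reflexive on a smooth threefold, it has homological dimension at most one at every point. So locally there is a resolution $0\to L_1\to L_0\to R\to 0$ by free modules, and $\mathcal{E}xt^i(R,\OO_X)=0$ for $i\geq 2$. Moreover, $R$ is locally free away from a finite set $Z$, since a reflexive sheaf is locally free in codimension two. Hence $\mathcal{E}xt^1(R,\OO_X)$ is supported on $Z$, and its $h^0$ equals its length.

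\emph{Step 2: the identity $c_3(R)=\length \mathcal{E}xt^1(R,\OO_X)$.} Choose a twist $R(m)$ that is globally generated, with $m\gg0$. Since $c_3$ of a rank-two sheaf transforms under twisting in a way that involves only $c_1$, $c_2$ and $c_3$, and the correction term vanishes in rank two, we have $c_3(R(m))=c_3(R)$; so we may replace $R$ by $R(m)$. For general sections, $R$ has a section $s$ vanishing in codimension two, giving
\[
0\to\OO_X\to R\to I_Y\otimes \det R\to 0
\]
with $Y$ a curve. Applying $\mathcal{H}om(-,\omega_X)$ identifies $\mathcal{E}xt^1(R,\OO_X)$ with $\mathcal{E}xt^2(I_Y,\OO_X)\otimes(\det R)^{-1}\cong \mathcal{E}xt^3(\OO_Y,\OO_X)$, up to twist. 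This sheaf is the part of $\omega_Y$ measuring the failure of $Y$ to be locally Cohen--Macaulay, or dually, Hartshorne's comparison between $\omega_Y$ and the image of $s$. On the Chern side, I compute $c_3(R)=c_3(I_Y\otimes\det R)$ by Riemann--Roch. The result is $2p_a(Y)-2-\deg(\omega_X\otimes\det R)|_Y$ corrected by the length of the cokernel of $\OO_Y\to\omega_Y\otimes\omega_X^{-1}\otimes(\det R)^{-1}$. This cokernel is precisely $\mathcal{E}xt^1(R,\OO_X)$, twisted by a line bundle, which has no effect on length.

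\emph{Main obstacle.} The main obstacle is carrying out Step 2 cleanly. Instead of the route above, I would cite Hartshorne's \emph{Stable reflexive sheaves}, Proposition 2.6, which proves exactly $c_3(R)=h^0(\mathcal{E}xt^1(R,\omega_X))$; since $\omega_X$ is a line bundle, this has the same length as $\mathcal{E}xt^1(R,\OO_X)$.

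\emph{Final step.} Nonnegativity is then clear. For the equality case: $R$ is locally free iff $\mathcal{E}xt^1(R,\OO_X)=0$, because $R$ has homological dimension at most one, and by the identity this holds iff $c_3(R)=0$.
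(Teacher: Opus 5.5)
Your argument is correct in the form you actually rely on: Step 1, then Hartshorne's Proposition 2.6 (in its version for an arbitrary smooth projective threefold) combined with $\mathcal{E}xt^1(R,\omega_X)\cong\mathcal{E}xt^1(R,\OO_X)\otimes\omega_X$, then the projective-dimension argument for the equality case — and this is essentially the paper's route, since the paper gives no proof and simply cites \cite[Lemma 2.1]{GK19}. You should, however, delete or repair the abandoned Step 2 sketch, which misidentifies the sheaf: $\mathcal{E}xt^3(\OO_Y,\OO_X)\cong\mathcal{E}xt^2(I_Y,\OO_X)\cong\mathcal{E}xt^2(R,\OO_X)\otimes\det R=0$ (this is exactly why $Y$ is Cohen--Macaulay), and the correct statement is that $\OO_Y\to\omega_Y\otimes\omega_X^{-1}\otimes(\det R)^{-1}$ is injective with cokernel $\mathcal{E}xt^1(R,\OO_X)$, of length $2p_a(Y)-2-(K_X+c_1(R))\cdot Y$, which equals $c_3(R)$ on the nose rather than being a correction term to it.
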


For a rank-two sheaf $R$ on $X$ with $c_1(R)=-H$, the Hirzebruch--Riemann--Roch formula gives 
\begin{equation}\label{eq-rank-two}
  \chi(R)=\frac{1}{2}c_3(R).
\end{equation}

Faenzi describes the Beilinson spectral sequence associated with the exceptional collection $\langle E, K, U, \OO_X\rangle$ in \cite[Corollary 7.3]{Fae07}. For the ideal sheaves considered here, the vanishings reduce it to the complexes in Table \ref{tab:rateulr}. In degrees $4,5$ and $6$, the complexes are realized below by evaluation morphisms and extend over the corresponding projective parameter spaces.

\begin{table}[htbp]
\centering
\begin{tabular}{|c||m{8cm}|}
\hline
Degree of curve $C_d$ & Resolution (or complex) of the ideal sheaf $I_{C_d}$ on $X$ \\
\hline
\hline
$d=3$ & $0\lr E \lr U \lr I_{C_3}\lr 0$ \\
\hline
$d\geq 4$ & $0\to (d-2)E\to (d-3)K\to (d-4)U\to 0$ \\
\hline
\end{tabular}
\caption{Beilinson complexes for general rational curves on $X$}
\label{tab:rateulr}
\end{table}

The cases $d=1,2$ are described by the corresponding degeneracy loci; see \cite[Lemmas 3.1 and 3.2]{AF06}. For $4\leq d \leq 6$, these complexes can be treated using the exceptional collection.

We recall an incidence description of a blow-up, which will be applied to the degree six case.

\begin{lemma}[\protect{\cite[Proposition 6.2]{CLS22}}]\label{lem-blowup}
  Let $L\colon \cA\to \cB$ be a morphism of vector bundles of ranks $b+r$ and $b$, respectively, on a smooth variety $T$. Let $Z\coloneqq D_{b-1}(L)$ be the degeneracy locus with the maximal-minor scheme structure. Assume that $Z$ is smooth of codimension $r+1$ and that $D_{b-2}=\emptyset$. Let $p\colon \Gr_T(r,\cA)\to T$ be the projection map and $\cW\subset p^{\ast}\cA$ be tautological subbundle. Then the zero scheme
  \[
  Y_L\coloneqq Z(\cW\longrightarrow p^{\ast}\cA\xrightarrow{p^{\ast}L}\cB)
  \]
  is isomorphic over $T$ to $\mathrm{Bl}_Z T$.
\end{lemma}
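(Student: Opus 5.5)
The statement is Lemma \ref{lem-blowup}, quoted from \cite[Proposition 6.2]{CLS22}. I would prove it by a local computation over $T$, followed by an application of the universal property of the blow-up. The incidence scheme $Y_L\subset \Gr_T(r,\cA)$ parametrizes pairs $(t,W)$ with $W\subset \cA_t$ of dimension $r$ and $L_t(W)=0$, so that $W\subset \ker L_t$.

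\emph{Step 1: fibers of $p$.} Over $t\notin Z$ the map $L_t$ is surjective, so $\ker L_t$ has dimension exactly $r$. The fiber of $p|_{Y_L}$ is then a single reduced point, namely $W=\ker L_t$. Over $t\in Z$ we have $D_{b-2}=\emptyset$, so $L_t$ has rank exactly $b-1$ and $\dim\ker L_t=r+1$. The fiber is therefore $\Gr(r,r+1)\cong\P^r$. This matches the exceptional divisor $\P(N_{Z/T})$, whose fiber over a point of $Z$ is $\P^{r}$ because $\codim Z=r+1$.

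\emph{Step 2: local model near $Z$.} Work locally on $T$ near a point of $Z$. Since $L$ has rank at least $b-1$ there, I can choose bases so that
\[
L=\begin{pmatrix} I_{b-1}&0\\0&\phi\end{pmatrix},
\]
with $\phi=(f_1,\dots,f_{r+1})\colon \OO^{r+1}\to\OO$ a row vector of functions. The ideal of maximal minors of $L$ then equals $(f_1,\dots,f_{r+1})$. Because $Z$ is smooth of codimension $r+1$, the functions $f_i$ form part of a regular system of parameters. The incidence condition forces $W\subset \OO^{r+1}\oplus 0$ in the last $r+1$ coordinates of $\cA$, with $W\subset\ker\phi$. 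An $r$-plane in an $(r+1)$-space is the same as a line $[\lambda]\in\P^r$ in the dual space, via $W=\lambda^\perp$. The condition $\phi|_W=0$ says that $\phi$ is proportional to $\lambda$, that is, $\lambda_i f_j=\lambda_j f_i$ for all $i,j$. These are exactly the equations of $\mathrm{Bl}_Z T\subset T\times\P^r$ when the $f_i$ form a regular sequence. So locally $Y_L\cong \mathrm{Bl}_ZT$, compatibly with the projections to $T$.

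\emph{Step 3: globalizing.} To avoid depending on the choice of basis, I would argue via the universal property of the blow-up. On $Y_L$, the pullback of $\cI_Z$ is the image of $\wedge^{b}$ of $L$ restricted to $\cA/\cW$, where $\cA/\cW$ has rank $b$. This is a determinant of a map between rank-$b$ bundles, hence an invertible ideal. That gives a morphism $Y_L\to\mathrm{Bl}_ZT$ over $T$. The local computation of Step 2 shows this morphism is an isomorphism. Here I use that $Y_L$ is the closure of $p^{-1}(T\setminus Z)$, which holds because the local equations define an integral scheme.

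\emph{Main obstacle.} The delicate step is the scheme-theoretic identification in Step 2. It requires two checks. First, the zero scheme of $\cW\to\cB$ reduces exactly to the relations $\lambda_if_j=\lambda_jf_i$, with no embedded or extra components. Second, the maximal-minor ideal of $L$ equals $(f_i)$ on the nose. Both follow from the normal form, which itself relies on $D_{b-2}=\emptyset$.
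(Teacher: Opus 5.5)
Your proof is correct. There is nothing in the paper to compare it against: the lemma is quoted from \cite[Proposition 6.2]{CLS22} without an argument and is only applied in Corollary \ref{cor-blowup} with $b=8$, $r=4$.

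What you give is the standard self-contained argument: local normal form, then the blow-up of a regular sequence, then the universal property of the blow-up. Its value is that it shows where each hypothesis enters.
\begin{itemize}
\item The condition $D_{b-2}=\emptyset$ gives the normal form $\mathrm{diag}(I_{b-1},\phi)$.
\item The maximal-minor scheme structure gives $\cI_Z=(f_1,\dots,f_{r+1})$ exactly.
\item Smoothness in codimension $r+1$ makes the $f_i$ a regular sequence. The ideal is then of linear type: its Rees algebra is cut out by the minors $\lambda_if_j-\lambda_jf_i$. On the chart $\lambda_1=1$ this amounts to $f_1$ being a nonzerodivisor modulo $(f_j-\lambda_jf_1)_{j\ge 2}$. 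This is why the zero scheme of $\phi|_{\cW}$ on $\Gr_T(r,\OO^{r+1})\cong T\times\P^r$ equals the blow-up with no saturation.
\end{itemize}
You reduce correctly to the sub-Grassmannian of the last $r+1$ coordinates. The zero scheme of the first block of $L|_{\cW}$ is that sub-Grassmannian scheme-theoretically.

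Three points need tightening.
\begin{itemize}
\item In Step 3, on $Y_L$ the map $p^*L$ factors through $\bar L\colon p^*\cA/\cW\to p^*\cB$. The inverse image of $\cI_Z$ is then generated locally by $\det\bar L$, but that only makes it locally principal. For the universal property you need $\det\bar L$ to be a nonzerodivisor. This follows from the integrality of $Y_L$, which Step 2 provides, together with the fact that $\det\bar L$ is a unit over $T\setminus Z$. Your density remark belongs here.
\item To conclude that the universal-property morphism $Y_L\to\mathrm{Bl}_ZT$ is an isomorphism, note that over each chart it must coincide with the local isomorphism of Step 2. This holds by the uniqueness part of the universal property.
\item Step 2 only treats neighbourhoods of points of $Z$. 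Step 1 checks closed fibers, which is not enough scheme-theoretically over $T\setminus Z$. There the normal form $(I_b\mid 0)$, or equivalently your normal form with some $f_i$ a unit, shows that $Y_L\to T$ is an isomorphism.
\end{itemize}
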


\section{The degree-four case}\label{sec-v22}
In this section, we describe the Hilbert scheme of rational quartic curves on $X$, extending the result of \cite{CKK25}. 
\subsection{The Hilbert scheme of rational quartics}
Set $G_4\coloneqq \Gr(2,A)$. For a two-dimensional subspace $W\subset A=\Hom(E,K)$, let 
\[
u_W\colon W\otimes E\longrightarrow K
\]
be the evaluation morphism.

\begin{proposition}\label{prop-deg4-inj}
  For every $W\in G_4$, the morphism $u_W$ is injective and there is an exact sequence
  \begin{equation}\label{eq-deg4}
    0\longrightarrow W\otimes E\xrightarrow{u_W} K\longrightarrow \mathcal{I}_{\mathcal{C}_W}\longrightarrow 0,
  \end{equation}
  where $\mathcal{C}_W\subset X$ is a Cohen--Macaulay curve with arithmetic genus zero.
\end{proposition}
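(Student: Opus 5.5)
This is the case $n=2$ of Corollary \ref{cor-exact}, with $M=K$. For $n=2$ the class condition $[M]=(n-1)[K]-(n-2)[U]$ reduces to $[M]=[K]$, so $M=K$ satisfies it. The plan is to check the hypotheses of Proposition \ref{prop-inj} and Corollary \ref{cor-exact}, and then read off the genus.

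First, $K$ is a slope-stable vector bundle (\cite{AF06,Fae07}), hence slope-semistable. From $\ch(K)=5-2H+4\ell+\frac{2}{3}p$ we get $\rk(K)=5=2n+1$ and $c_1(K)=-2H=-nH$. The remaining hypothesis is $\ch_2(K)-2\ch_2(E)\neq0$. Since $\ch_2(E)=4\ell$, this difference is $4\ell-8\ell=-4\ell\neq0$. Moreover $W\subset A=\Hom(E,K)$ is a $2$-plane, as Proposition \ref{prop-inj} requires.

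Proposition \ref{prop-inj} then applies, and gives three things:
\begin{enumerate}[(1)]
\item $u_W$ is injective;
\item the maximal-minor locus $\mathcal{C}_W$ is Cohen--Macaulay of pure codimension two;
\item there is an exact sequence \eqref{eq-det-seq-form}.
\end{enumerate}
The twisting line bundle in \eqref{eq-det-seq-form} is $\det(K)\otimes\det(E)^{-2}\otimes\det(W)^{-2}$. We have $c_1(\det K)=-2H=c_1(\det E^{\otimes 2})$, and $\Pic(X)=\Z H$ is torsion-free, so $\det(K)\cong\det(E)^{2}$. The factor $\det(W)^{-2}$ is a one-dimensional vector space. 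Choosing a trivialization gives the sequence \eqref{eq-deg4} with cokernel exactly $\mathcal{I}_{\mathcal{C}_W}$.

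For the genus, Corollary \ref{cor-exact} with $n=2$ gives
\[
\ch(\mathcal{I}_{\mathcal{C}_W})=1-4\ell+p,\qquad \chi(\OO_{\mathcal{C}_W}(tH))=4t+1.
\]
Hence $\chi(\OO_{\mathcal{C}_W})=1$. Since $\mathcal{C}_W$ is Cohen--Macaulay of pure dimension one, its arithmetic genus $1-\chi(\OO_{\mathcal{C}_W})$ is zero.

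Almost all of the work was already done in the proof of Proposition \ref{prop-inj}: the integrality argument there excludes a nontrivial kernel, and semistability excludes divisorial components. The only things to check here are the numerical condition on $\ch_2$ and the identification $\det(K)\cong\det(E)^2$. Both are routine, so I expect no step to be a real obstacle; the only care needed is to keep the signs in the Chern character computation straight.
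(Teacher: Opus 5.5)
Your proposal is correct and takes the same approach as the paper. The paper's proof is the one-line observation that the statement is the case $n=2$, $M=K$ of Corollary \ref{cor-exact}. You make explicit what that one line leaves implicit: the hypothesis checks ($\rk K=5$, $c_1(K)=-2H$, $\ch_2(K)-2\ch_2(E)=-4\ell\neq 0$, $\det K\cong(\det E)^{2}$), and reading off $p_a=0$ from $\chi(\OO_{\mathcal{C}_W}(tH))=4t+1$.
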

\begin{proof}
  This is immediate from Corollary \ref{cor-exact}.
\end{proof}

Let $\mathcal{W}_4\subset A\otimes \OO_{G_4}$ be the tautological bundle of rank two, and let $p_4\colon G_4\times X\to G_4$ and $q_4\colon G_4\times X\to X$ be the projections. Consider the universal evaluation morphism
\[
\mathbf{u}_4\colon p_4^{\ast}\mathcal{W}_4\otimes q_4^{\ast}E\longrightarrow q_4^{\ast}K.
\]
Its maximal-minor locus is $\cC_4$, and Theorem \ref{thm-det-codim-two} gives an exact sequence
\begin{equation}\label{univ:4}
  0\longrightarrow p_4^{\ast}\mathcal{W}_4\otimes q_4^{\ast}E\longrightarrow q_4^{\ast}K\longrightarrow \mathcal{I}_{\mathcal{C}_4}\otimes p_4^{\ast}(\det \mathcal{W}_4)^{-2}\longrightarrow 0.
\end{equation}
Here, we use $\det K\cong (\det E)^2$. Proposition \ref{prop-family} gives a flat family and a morphism
\begin{equation}\label{p4map}
  \Psi_4\colon G_4\longrightarrow \bH_4(X),\quad W\mapsto [C_W].
\end{equation}
\begin{proposition}\label{thm-deg4-iso}
The morphism $\Psi_4$ in \eqref{p4map}  is an isomorphism.
\end{proposition}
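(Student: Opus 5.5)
To prove that $\Psi_4$ is an isomorphism, I would show three things: it is injective on closed points, it is injective on tangent spaces, and its image is the whole connected component $\bH_4(X)$, which is smooth of dimension $4$. Zariski's main theorem then gives the isomorphism, since $G_4$ and the target are both smooth and projective and $\Psi_4$ would be bijective and unramified.

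\textbf{Injectivity on points.} Given $[C]=\Psi_4(W)$, the sequence \eqref{eq-deg4} realizes $K\to\mathcal I_C$. I would first show that this quotient map is determined by $\mathcal I_C$ up to scalar. Apply $\Hom(-,\mathcal I_C)$ to \eqref{eq-deg4}; with Lemma \ref{lem-endo} and $\Hom(E,\mathcal I_C)=0$, this gives $\Hom(K,\mathcal I_C)=\C$. The vanishing $\Hom(E,\mathcal I_C)\subset \Hom(E,\OO_X)=H^0(E^\vee)$ needs justification. I would get it from $E^\vee\cong E(1)$ by comparing $\Hom(E,K)=A$ with $\Hom(E,W\otimes E)=W$: the long exact sequence gives $\Hom(E,\mathcal I_C)\cong A/W$, which is $2$-dimensional. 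So the first argument is wrong, and I would instead argue directly. Use $\Ext^i(K,E)=0$ to obtain $\Hom(K,\mathcal I_C)\cong\Hom(K,K)=\C$, via the sequence $0\to\Hom(K,W\otimes E)=0\to\Hom(K,K)\to\Hom(K,\mathcal I_C)\to\Ext^1(K,E)\otimes W=0$. Then Lemma \ref{lem-recover W} recovers $W$ as the kernel of $A=\Hom(E,K)\to\Hom(E,\mathcal I_C)$. Hence $\Psi_4$ is injective.

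\textbf{Tangent spaces and smoothness.} I would compute $\Ext^1(\mathcal I_C,\mathcal I_C)$ by applying $\RHom(-,\mathcal I_C)$ and $\RHom(K,-)$, $\RHom(E,-)$ to \eqref{eq-deg4}. The $\RHom$ of the exceptional pairs are known: $\RHom(K,K)=\C$, $\RHom(K,E)=0$, $\RHom(E,E)=\C$, $\RHom(E,K)=A$. From these, $\RHom(E,\mathcal I_C)=A/W$ and $\RHom(K,\mathcal I_C)=\C$, so
\[
\Ext^1(\mathcal I_C,\mathcal I_C)\cong \Hom(W,A/W)=T_{[W]}G_4,
\]
which is $4$-dimensional, with higher $\Ext$ groups vanishing. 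Under this identification the differential $d\Psi_4$ is the Kodaira--Spencer map of the family \eqref{univ:4}, which I expect to be exactly this connecting isomorphism. Hence $\Psi_4$ is unramified. Since $\dim_{[C]}\bH_4(X)\ge\chi(N_C)=4$, Proposition \ref{prop-Hilb-tan} shows that the Hilbert scheme is smooth of dimension $4$ at every point of the image.

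\textbf{Surjectivity.} The image $\Psi_4(G_4)$ is closed, being the image of a projective variety. It is also open, since $\Psi_4$ is an injective unramified morphism between smooth $4$-dimensional spaces at those points and hence étale there. The image therefore contains the connected component $\bH_4(X)$ whenever it meets it. To see that it meets it, a general $C_W$ should be a smooth rational quartic, for instance via the bisecant-line description of Proposition \ref{bilinecubic}. Conversely, every curve in the component arises from the monad: by the Beilinson spectral sequence of \cite[Corollary 7.3]{Fae07} in Table \ref{tab:rateulr}, a general rational quartic has a resolution $0\to 2E\to K\to\mathcal I_C\to0$. Connectedness then closes the argument.

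The main obstacle is the identification of $d\Psi_4$ with the $\Ext$ computation, together with making sure the vanishings $\Ext^{i}(\mathcal I_C,\mathcal I_C)=0$ for $i\geq 2$ come out correctly from the exceptional collection. I would handle this by writing $\mathcal I_C$ as the two-term complex $[W\otimes E\to K]$ and computing $\RHom$ of that complex with itself.
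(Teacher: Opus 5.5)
Your proposal is correct and follows essentially the same route as the paper. You recover $W$ from $\Hom(K,\mathcal I_C)\cong\Hom(K,K)=\C$ and Lemma~\ref{lem-recover W}, compute $\Ext^1(\mathcal I_C,\mathcal I_C)\cong\Hom(W,A/W)$ to get smoothness of $\bH_4(X)$ at the image via Proposition~\ref{prop-Hilb-tan}, and conclude with Zariski's main theorem.

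The Kodaira--Spencer identification you flag as the main obstacle is not needed: in characteristic zero, a bijective morphism onto a smooth (hence normal) target is already an isomorphism. Also, the paper takes the lower bound $\dim_{[C]}\bH_4(X)\ge 4$ from injectivity of $\Psi_4$ rather than from $\chi(N_C)$, and treats the fact that the image is the component of smooth rational quartics separately, in Proposition~\ref{exi4}.
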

\begin{proof}
  Applying $\Hom(K,-)$ to \eqref{eq-deg4} and using $\RHom(K,E)=0$ gives
  \[
  \Hom(K,K)\xrightarrow{\sim} \Hom(K,\mathcal{I}_{\mathcal{C}_W}).
  \]
  Since $K$ is exceptional, the quotient $K\to \mathcal{I}_{\mathcal{C}_W}$ is unique up to scalar. Lemma \ref{lem-recover W} therefore recovers $W=\ker\left(A\to \Hom(E,\mathcal{I}_{\mathcal{C}_W})\right)$ uniquely. Hence, $\Psi_4$ is injective on closed points and thus $4\leq \dim_{[\mathcal{C}_W]}\bH_4(X)$. On the  other hand, applying $\RHom(E,-)$ and $\RHom(K,-)$ to \eqref{eq-deg4}, and using that the exceptional collection is strong, we obtain
  \[
  \Hom(E,\mathcal{I}_{\mathcal{C}_W})=A/W,\quad \Ext^{>0}(E,\mathcal{I}_{\mathcal{C}_W})=0,\quad \RHom(K,\mathcal{I}_{\mathcal{C}_W})=\C.
  \]
  Moreover, Lemma \ref{lem-endo} shows that the map $\Hom(\mathcal{I}_{\mathcal{C}_W},\mathcal{I}_{\mathcal{C}_W})\to \Hom(K,\mathcal{I}_{\mathcal{C}_W})$ is an isomorphism. By applying $\RHom(-,\mathcal{I}_{\mathcal{C}_W})$ to \eqref{eq-deg4} again, we obtain
  \[
  \Ext^1(\mathcal{I}_{\mathcal{C}_W},\mathcal{I}_{\mathcal{C}_W})\cong \Hom(W\otimes E,I_{\cC_W})\cong W^\vee \otimes (A/W).
  \]
In particular, $\bH_4(X)$ is smooth at $[\cC_W]$ by Proposition \ref{prop-Hilb-tan}. Since the Grassmannian $G_4$ is smooth, we can conclude that the map $\Psi_{4}$ is an isomorphism by Zariski's Main theorem.
\end{proof}

\begin{remark}
The proof of Proposition \ref{thm-deg4-iso} uses only the properties of the exceptional bundles $E,K,U$, their slope-stability, and the exact sequence \eqref{eq-af-one}; it does not use the smoothness of $\Delta_X$. Since these properties remain valid for the Mukai--Umemura threefold, the same argument applies. 
\end{remark}

The stability of the universal subbundle $U$, together with \cite[Lemma 3.1]{BF14}, gives the following vanishing.
\begin{lemma}[\protect{\cite[Lemma 3.1]{BF14}}]\label{lem:qv}
Let $C\subset X$ be a smooth rational curve of degree $d\geq 4$. Then 
\[ 
H^0(C,Q^\vee|_C)=0. 
\] 
\end{lemma}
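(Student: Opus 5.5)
Since $C\cong\P^1$, the restriction $Q^\vee|_C$ splits as a direct sum of line bundles $\bigoplus_i\OO_{\P^1}(a_i)$, so it suffices to show that every $a_i$ is negative. Here $Q^\vee$ is the rank-two dual quotient bundle appearing in $0\to K\to B\otimes U\to Q^\vee\to 0$. From the Chern characters, $\rk Q^\vee=3\cdot 3-5=4$? That cannot be right for the tautological quotient. I therefore read $Q$ as Mukai's tautological quotient bundle on $X\subset\Gr(3,7)$, with $\rk Q=4$, and $Q^\vee$ as the rank-four bundle $B\otimes U/K$. Its first Chern class is $c_1(Q^\vee)=3(-H)-(-2H)=-H$. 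Hence $\deg Q^\vee|_C=-d$ and $\sum a_i=-d\le -4$.

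The plan is to bound the largest summand. Suppose $H^0(C,Q^\vee|_C)\neq 0$. Then some $a_i\ge 0$, which gives a nonzero map $\OO_C\to Q^\vee|_C$, equivalently a surjection $Q|_C\to\OO_C(-a)$ with $a\le 0$. To reach a contradiction, I would use the following approach.

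\emph{Step 1: apply the Grassmannian model.} The dual $Q$ is globally generated, being a quotient of $\OO_X^{7}$, so a section of $Q^\vee$ along $C$ is a line $\ell\subset V_7^\vee$ that annihilates, for all $x\in C$, the image of the corresponding three-space $U_x$. Since $Q^\vee\subset V_7^\vee\otimes\OO$ as the annihilator of $U$, a nonzero section of $Q^\vee|_C$ coming from a trivial summand $a_i=0$ is a constant vector $v\in V_7^\vee$ with $U_x\subset v^\perp$ for all $x\in C$. If instead $a_i>0$, then the other summands are even more negative, and a constant vector still exists by taking the evaluation of the summand's sections at a point combined with the trivial sub-line. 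More carefully, I would reduce to the trivial case by noting that $H^0(Q^\vee|_C)\subset V_7^\vee$ through $H^0(\OO^7)$. This inclusion holds because the global sections of $Q^\vee|_C$ inject into $V_7^\vee\otimes H^0(\OO_C)$. Thus every section is a constant $v$ with $C\subset X\cap\Gr(3,v^\perp)$.

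\emph{Step 2: identify the locus $X_v=X\cap\Gr(3,6)$.} This locus is the zero scheme of the section of $U^\vee$ determined by $v$, so it is a curve of degree $c_3$-type class, or degenerate. Computing $c_2(U^\vee)$ gives $\ch_2(U)=\ell$, so $c_2(U)=\tfrac12c_1^2-\ch_2=11\ell-\ell$. This is a degree-$10$ curve class in expected dimension. The zero locus of a section of the rank-three bundle $U^\vee$ is expected to be finite; in fact $c_3(U^\vee)$ counts points. The correct statement is therefore that $X_v$ has expected dimension zero. The proof then reduces to showing that $X_v$ contains no curve of degree $\ge4$. This is where stability of $U$ enters. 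I would use Lemma 3.1 of \cite{BF14} in the form that, if $X_v$ contained a curve $C$, the sequence $0\to \OO_C\to U^\vee|_C$ of sections would give a rank-two quotient destabilizing $U|_C$. For degree $\ge 4$, this would violate the known splitting bound obtained from $U^\vee$ being globally generated with $h^0(U^\vee)=7$.

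\emph{Main obstacle.} The main obstacle is Step 2, namely excluding positive-dimensional components of $X_v$ of degree $\ge4$. Because the lemma is cited directly from \cite[Lemma 3.1]{BF14}, I would invoke it as stated there. That result shows that a smooth rational curve $C$ of degree $d\ge4$ on $X$ is not contained in any $\Gr(3,v^\perp)\cap X$, using that such loci are lines, conics, or points.
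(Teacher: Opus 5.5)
Your Step 1 is correct and is the natural reformulation. Since $Q^\vee|_C\subset V_7^\vee\otimes\OO_C$ is a subbundle and $H^0(\OO_C)=\C$, one has $H^0(C,Q^\vee|_C)=\ker\bigl(V_7^\vee\to H^0(C,U^\vee|_C)\bigr)$. So the lemma says exactly that $C\not\subset X\cap\Gr(3,\ker v)$ for every $0\neq v\in V_7^\vee$.

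All the content, however, is in Step 2, and the argument you sketch there does not work. If $C\subset X_v$, the section $s_v$ of $U^\vee$ is identically zero on $C$, so it gives no map $\OO_C\to U^\vee|_C$. The only nonzero map attached to $v$ is $\OO_C\to Q^\vee|_C$, which is your starting hypothesis. ``Destabilizing $U|_C$'' cannot yield a contradiction, since no bundle of rank $\geq 2$ on $\P^1$ is stable. Global generation of $U^\vee$ only gives $U^\vee|_C\cong\OO(a_1)\oplus\OO(a_2)\oplus\OO(a_3)$ with $a_i\geq 0$, hence $h^0(U^\vee|_C)=d+3$. For $d\geq 4$ this is $\geq 7$, which is compatible with injectivity of $V_7^\vee\to H^0(U^\vee|_C)$ but does not imply it.

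The same count refutes the description of these loci that you attribute to \cite[Lemma 3.1]{BF14}. For a twisted cubic $C\subset X$ one gets $h^0(U^\vee|_C)=6<7$, so every twisted cubic lies in $X\cap\Gr(3,\ker v)$ for some $v\neq 0$. These loci are therefore not only lines, conics, or points, and this is exactly why the bound $d\geq 4$ in the lemma is sharp. As written, your proof ends either on a false classification or on the statement being proved.

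The missing ingredient is a genuine argument that no zero locus of a nonzero section of $U^\vee$ contains a smooth rational curve of degree $\geq 4$. The paper does not supply such an argument either: it simply quotes \cite[Lemma 3.1]{BF14}, together with the stability of $U$, for the vanishing itself. If you intend to rely on that reference, cite it directly for $H^0(C,Q^\vee|_C)=0$. Your reduction then becomes unnecessary, and the misdescribed intermediate claim should be dropped.
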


\begin{definition}
A morphism $f\colon \PP^1\lr X$ is called \emph{free} if $f^*T_X$ is globally generated.
\end{definition}

\begin{proposition}\label{exi4}
A general point of $G_4=\Gr(2,A)\cong \Gr(2,4)$ parametrizes a smooth rational quartic curve on $X$.
\end{proposition}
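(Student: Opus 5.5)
The plan is to run the isomorphism of Proposition \ref{thm-deg4-iso} backwards: produce one smooth rational quartic $C\subset X$, show that its ideal sheaf sits in a sequence of the form \eqref{eq-deg4}, and then use that $\Psi_4$ is an isomorphism and smoothness is an open condition in the flat family $\mathcal{C}_4\to G_4$. Since $G_4$ is irreducible, the locus of $W$ with $\mathcal{C}_W$ smooth and irreducible is then a nonempty open subset, which is what is claimed.

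\textbf{Existence of a free rational quartic.} Take a line $L$ and a twisted cubic $C_3$ on $X$ meeting transversally in one point, or a pair of general conics meeting in one point. Such configurations exist because the Hilbert schemes of lines, conics and cubics are the known spaces recalled in the introduction. For example, the cubics through a general point of $X$ form a positive-dimensional family, so a cubic can be chosen to meet a given line. The reducible curve $C_0=L\cup C_3$ is a nodal CM curve of degree $4$ and arithmetic genus $0$. For a general line (respectively conic, cubic) the normal bundle is balanced enough, $\OO\oplus\OO(-1)$ at worst for lines and $\OO\oplus\OO$ for conics, that $H^1(N_{C_0/X})=0$ follows by the standard gluing argument for normal bundles of nodal curves. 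Hence $C_0$ smooths inside $\bH_4(X)$ to a smooth rational quartic $C$ with $H^1(N_C)=0$, i.e.\ a free curve. By the dimension count, these curves fill an open subset of the $4$-dimensional component.

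\textbf{Comparison with $G_4$.} For such a smooth $C$, I apply the Beilinson spectral sequence of \cite[Corollary 7.3]{Fae07} to $\cI_C$. The needed vanishings are:
\begin{itemize}
\item $H^\bullet(\cI_C)=0$, from $h^0(\OO_C)=1$ and $H^1(\OO_C)=0$;
\item $\RHom(U,\cI_C)=0$, i.e.\ $H^\bullet(U^\vee|_C)$ is concentrated appropriately, using Lemma \ref{lem:qv} together with the sequence $0\to K\to B\otimes U\to Q^\vee\to 0$;
\item $\Hom(K,\cI_C)=\C$ and $\Hom(E,\cI_C)$ of dimension $2$.
\end{itemize}
These collapse the sequence to Table \ref{tab:rateulr} with $d=4$, namely $0\to 2E\to K\to \cI_C\to 0$. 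The map $2E\to K$ is given by a $2$-plane $W\subset A$, so $[C]=\Psi_4(W)$.

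Alternatively, and more cheaply, I can bypass the spectral sequence entirely. Since $\Psi_4$ is an isomorphism onto $\bH_4(X)$, every curve in the component, in particular the smoothing $C$, is some $\mathcal{C}_W$.

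\textbf{Main obstacle.} The hardest step is the existence and smoothability of the reducible degree-$4$ curve with unobstructed normal bundle, i.e.\ controlling $H^1(N_{C_0/X})$. If the line--cubic configuration is awkward, I would first try two conics meeting at a point, using $N_{Q/X}\cong\OO\oplus\OO$ for general conics $Q$. Alternatively, I would use the bisecant-line description (Proposition \ref{bilinecubic}) to exhibit directly one $W$ whose $\mathcal{C}_W$ is a smooth rational curve. Smoothness of that $\mathcal{C}_W$ would be checked via the rank of $u_W$ at each point.
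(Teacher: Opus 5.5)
Your overall structure matches the paper's: produce one smooth rational quartic, show it equals some $\mathcal{C}_W$, then use openness of smoothness in the flat family over the irreducible $G_4$. The existence step is not where the difficulty lies. The paper simply cites \cite{BJ22} for a free smooth rational quartic, and your nodal-smoothing construction is a plausible substitute, though $H^1(N_C)=0$ is weaker than freeness. The gap is the identification $[C]=\Psi_4(W)$, which is the actual content of the statement.

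Your ``cheap'' route is circular. The proof of Proposition~\ref{thm-deg4-iso} only shows that $\Psi_4(G_4)$ is \emph{some} connected component of $\Hilb_{4t+1}(X)$. It is closed because $G_4$ is proper, and open because the Hilbert scheme is smooth of dimension $4$ along it. That this component is the one containing smooth rational quartics, in particular your smoothing $C$, is exactly what has to be proved.

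Your Beilinson route asserts the decisive inputs rather than proving them. The bullets $\Hom(K,\cI_C)=\C$ and $\dim\Hom(E,\cI_C)=2$ come with no argument, and you would also need the higher Ext groups to vanish. Moreover, the groups that actually enter Faenzi's spectral sequence are $H^\bullet(\cI_C\otimes E)$, $H^\bullet(\cI_C\otimes U)$, $H^\bullet(\cI_C\otimes Q^\vee)$ and $H^\bullet(\cI_C)$, i.e.\ $H^{\bullet-1}$ of $E|_C$, $U|_C$ and $Q^\vee|_C$. The collapse to $0\to V_2\otimes E\to K\to\cI_C\to0$ needs $H^0(E|_C)=H^0(U|_C)=0$, so that $h^1(E|_C)=2$ and $h^1(U|_C)=1$, and it needs $H^\bullet(Q^\vee|_C)=0$. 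The paper gets the first two from the Grauert--M\"ulich splitting $E|_C\cong\OO_{\P^1}(-2)^{\oplus2}$, $U|_C\cong\OO_{\P^1}(-2)\oplus\OO_{\P^1}(-1)^{\oplus2}$; this is why it needs $C$ to be a \emph{general free} curve. It gets the last from Lemma~\ref{lem:qv} together with $\chi(Q^\vee|_C)=0$. None of this appears in your proposal.

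Incidentally, your first two bullets could be made sufficient by a different argument. Use $R\Gamma(\cI_C)=0$ and $\RHom(U,\cI_C)\cong R\Gamma(C,Q^\vee|_C)=0$; the latter comes from the dual tautological sequence $0\to Q^\vee\to\C^7\otimes\OO_X\to U^\vee\to0$, not from the sequence involving $K$. These place $\cI_C$ in $\langle E,K\rangle$, which is equivalent to the derived category of the hereditary $4$-Kronecker quiver. Since $\mathrm{End}(\cI_C)=\C$ by Lemma~\ref{lem-endo}, $\cI_C$ is a shift of an indecomposable representation. Its class $[K]-2[E]$ and Proposition~\ref{prop-deg4-inj} then force $\cI_C\cong\mathcal{I}_{\mathcal{C}_W}$ for some $2$-plane $W\subset A$. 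That argument, however, is not in your proposal.
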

\begin{proof}
By \cite[Theorem 1.2]{BJ22}, there exists a free smooth rational quartic curve $C(\cong\PP^1)$ on $X$ lying in a component whose evaluation map has connected fibers. We claim that the ideal sheaf $I_C$ has the resolution in \eqref{eq-deg4}.
By the Grauert--M\"ulich theorem (\cite[Proposition 3.1]{PRT24}), the restrictions of the exceptional bundles split as
\[
E|_C\cong \cO_{\PP^1}(-2)^{\oplus 2},
\quad
U|_C\cong
\cO_{\PP^1}(-2)\oplus\cO_{\PP^1}(-1)^{\oplus 2}.
\]
In particular, $H^0(C,E|_C)=H^0(C,U|_C)=0$. Also, Lemma \ref{lem:qv} gives $H^0(C,Q^\vee|_C)=0$. Hence, the Riemann--Roch theorem yields
\begin{equation}\label{eq:hc}
h^1(C,E|_C)=2,\quad h^1(C,U|_C)=1,\quad h^1(C,Q^\vee|_C)=0.
\end{equation}
Applying the Beilinson spectral sequence of \cite[Corollary 7.3]{Fae07} to $I_C$, we obtain a complex
\[
0\lr H^1(C,E|_C)\otimes E
\longrightarrow
H^1(C,U|_C)\otimes K
\longrightarrow
H^1(C,Q^\vee|_C)\otimes U,
\lr0\]
whose middle cohomology is $I_C$. By \eqref{eq:hc}, this complex reduces to an exact sequence
\[
0\longrightarrow V_2\otimes E
\xrightarrow{\alpha_C}
K
\longrightarrow
\cI_C
\longrightarrow 0, \qquad V_2\coloneqq H^1(C, E|_C)
\]
By Lemma \ref{lem-recover W}, we have $V_2\subset \Hom(E, K)=A$ and hence determines a point $[V_2]\in\Gr(2,A)=G_4$.
Thus, the space $G_4$ contains a point parametrizing a smooth rational quartic. Since $G_4$ is irreducible and its universal family in \eqref{univ:4} is flat, the locus where the fibers are smooth is open and nonempty. Therefore, a general fiber is a smooth rational quartic.
\end{proof}

\subsection{Bisecant lines and rational quartics}
Let $\iota\colon \wedge^2U\lr A\otimes E$ be the first morphism in \eqref{eq-af-one}, and let $\pi_W\colon A\lr A/W$ be the canonical projection map. Combining this with the second map in \eqref{eq-deg4}, we have another locally free resolution of $I_{C_W/X}$ 
\begin{equation}\label{eq:wr}
0\lr \wedge^2 U \stackrel{(\Phi \coloneqq \pi_W \otimes \mathrm{id}_{E})\circ\iota}{\xrightarrow{\quad}}(A/W)\otimes E\lr I_{C_W/X}\lr 0.
\end{equation}
The morphism $\Phi$ is induced by the first syzygies of the corresponding twisted cubics (for details, see \cite[Section 3]{Fae07}).
Using the exact sequence \ref{eq:wr} and a classical result of Vainsencher and Xavier \cite{VX02}, one can obtain the geometric meaning of the isomorphism $\Psi_4$ in Proposition \ref{thm-deg4-iso}.

\begin{proposition}\label{bilinecubic} 
For a general point $[W]\in \Gr(2,A)$, let 
\[ 
L_{W} \coloneqq \PP((A/W)^\vee)\subset \PP(A^\vee) 
\] 
be the corresponding line. Then 
\[ 
C_W= \left\{ x\in X \mid \length(\Gamma_x\cap L_W)\ge 2 \right\}, 
\]
where $\Gamma_x\subset\PP(A^\vee)$ is the twisted cubic parametrized by $x\in X$. In particular, $C_W$ parametrizes the twisted cubics for which $L_W$ is a bisecant line. \end{proposition}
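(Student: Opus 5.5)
We will work fiberwise with the resolution \eqref{eq:wr}, which exhibits $C_W$ as the degeneracy locus of
\[
\Phi\colon \wedge^2U\longrightarrow (A/W)\otimes E .
\]
This is a map from a rank $3$ bundle to a rank $4$ bundle, so $C_W$ is precisely the set of points $x$ where the fiber map $\Phi_x$ fails to be injective. The plan is to identify $\Phi_x$ with a map built from the twisted cubic $\Gamma_x$, and then to use the classical description of bisecant lines.

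\textbf{Step 1: identify the fibers.} The sequence \eqref{eq-af-one} at a point $x$ reads
\[
0\to \wedge^2U_x\to A\otimes E_x\to K_x\to 0 .
\]
Following Faenzi's description \cite[Section 3]{Fae07} and Remark \ref{rem:K-Fourier-Mukai}, we take the following dictionary.
\begin{itemize}
\item $E_x^\vee$ is the $2$-dimensional space from the Hilbert--Burch presentation of $\Gamma_x$.
\item $(\wedge^2U_x)^\vee\cong U_x(1)$ is the $3$-dimensional space of quadrics $H^0(\mathcal I_{\Gamma_x}(2))$.
\item The map $\wedge^2U_x\to A\otimes E_x$ is dual to the linear syzygy matrix $A^\vee\otimes E_x^\vee\to (\wedge^2U_x)^\vee$.
\end{itemize}
Concretely, $\Gamma_x\subset\P(A^\vee)$ is cut out by the $2\times 2$ minors of a $3\times 2$ matrix $M_x$ of linear forms on $\P(A^\vee)$, that is, linear forms in $A$. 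The map $\Phi_x$ is then this matrix with the entries taken modulo $W$. In other words, $\Phi_x$ is the restriction of $M_x$ to the line $L_W=\P((A/W)^\vee)$, on which linear forms are exactly the elements of $A/W$.

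\textbf{Step 2: apply Vainsencher--Xavier.} The restricted matrix $M_x|_{L_W}$ is a $3\times 2$ matrix of linear forms on $\P^1$. The evaluation map
\[
\C^3\longrightarrow (A/W)\otimes\C^2=H^0(\OO_{L_W}(1))\otimes\C^2
\]
fails to be injective exactly when some nonzero combination of the rows vanishes on $L_W$. Equivalently, the restricted $3\times 2$ matrix drops rank generically, so its $2\times2$ minors, which are the quadrics of $\Gamma_x$ restricted to $L_W$, span at most a one-dimensional space. By the classical result \cite{VX02}, this happens if and only if
\[
\length(\Gamma_x\cap L_W)\ge 2 .
\]
The argument goes as follows.
\begin{itemize}
\item If $\length(\Gamma_x\cap L_W)\ge 2$, the restrictions of the quadrics lie in $H^0(\OO_{L_W}(2)\otimes \mathcal I_Z)$, where $Z=\Gamma_x\cap L_W$. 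This space has dimension $\le 1$.
\item Conversely, suppose the restrictions span a space of dimension $\le 1$. Then $Z$ has length $\ge 2$, or $L_W\subset\Gamma_x$. The second case is impossible, since a twisted cubic contains no line.
\end{itemize}
Set-theoretically, this gives
\[
C_W=\{x\in X\mid \length(\Gamma_x\cap L_W)\ge 2\}.
\]

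\textbf{Main obstacle.} The main difficulty is Step 1: making the identification of $\Phi_x$ with the restricted syzygy matrix precise and compatible with $\mu$. This means checking that the composition pairing $A\otimes E_x^\vee\to(\wedge^2U_x)^\vee$ really is the Hilbert--Burch matrix of $\Gamma_x$ under Schreyer's model. I would first try to deduce this from the universal resolution in Remark \ref{rem:K-Fourier-Mukai}, restricted to the fiber $p^{-1}(x)=\Gamma_x$.

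A further point concerns the scheme structure. Generality of $W$ ensures that $C_W$ is a reduced smooth quartic, by Proposition \ref{exi4}, so the set-theoretic equality suffices to identify the two loci as curves.
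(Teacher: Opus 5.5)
Your strategy is the paper's. Membership $x\in C_W$ is the non-injectivity of $\Phi_x\colon\wedge^2U_x\to(A/W)\otimes E_x$ from \eqref{eq:wr}. The map $\Phi_x$ is read off from the linear syzygies of $\Gamma_x$; the paper, like you, takes this from \cite[Section 3]{Fae07} without further proof. The conclusion then comes from the classical description of bisecants. The paper phrases the middle step as follows: a nonzero $\omega\in\wedge^2U_x$ is a pencil of quadrics through $\Gamma_x$, and $\Phi_x(\omega)=0$ exactly when the residual line of that pencil is $L_W$. Your rows of the Hilbert--Burch matrix are the same objects, since the two entries of a row cut out the residual line of the pencil spanned by the two minors involving that row.

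There is, however, a broken link in Step 2. The sentence ``equivalently, the restricted $3\times 2$ matrix drops rank generically'' is false. Generic rank $\le 1$ would force all three $2\times2$ minors to vanish on $L_W$, i.e.\ $L_W\subset\Gamma_x$, which you yourself rule out. What your argument actually proves is only one implication. A row combination vanishing on $L_W$ becomes a zero row after a row operation, so two minors vanish on $L_W$ and the restricted quadrics span at most a line. The reverse implication is never established, yet it is exactly what gives the inclusion $\{x\mid\length(\Gamma_x\cap L_W)\ge 2\}\subseteq C_W$.

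The missing direction is short. Suppose the restrictions of the quadrics to $L_W$ span at most a line. Choose the basis of $H^0(\mathcal I_{\Gamma_x}(2))$, and correspondingly the rows $r_1,r_2,r_3$ of $M_x$, so that the first two generators vanish on $L_W$; by Hilbert--Burch these are $\pm\Delta_{23}$ and $\pm\Delta_{13}$. (A change of basis of the quadrics acts on $M_x$ by row operations and on its minors by the cofactor matrix, so the Hilbert--Burch correspondence is preserved.) Since $L_W\not\subset\Gamma_x$, $\Delta_{12}$ does not vanish on $L_W$. Hence the restricted rows $\bar r_1,\bar r_2$ are linearly independent over the function field $\C(L_W)$. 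The vanishing of $\Delta_{13}$ and $\Delta_{23}$ on $L_W$ says that $\bar r_3$ is proportional to both $\bar r_1$ and $\bar r_2$, so $\bar r_3=0$. Thus the entries of $r_3$ lie in $W$, and the element of $\wedge^2U_x$ given by the pencil spanned by the first two generators lies in $\ker\Phi_x$. This is precisely the paper's statement that the bisecant $L_W$ is the residual line of a pencil of quadrics through $\Gamma_x$. With this added, your argument is complete at the same level of detail as the paper's.
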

\begin{proof} 
By \eqref{eq:wr}, a point $x \in X$ belongs to $C_W$ precisely when the fiber morphism 
\[ 
\wedge^2U_x\longrightarrow (A/W)\otimes E_x 
\] 
has a nonzero kernel. Since $\dim U_x=3$, every nonzero element of $\wedge^2U_x$ determines a pencil $\Pi\subset U_x$. By the construction of the map $\Phi$ in \eqref{eq:wr}, this is equivalent to the residual line of $\Pi$ being $L_W$. By the classical residual description of twisted cubics, this is equivalent to $L_W$ being a bisecant line of $\Gamma_x$ (\cite{VX02}). Hence, the assertion follows. 
\end{proof}

\section{The degree-five case}\label{sec-degree-five}
Recall that $B^\vee=\Hom(K, U)$. Let $S\subset B^\vee$ be a two-plane and 
\[
\phi_S\colon S\otimes K\longrightarrow U, \quad M_S\coloneqq \ker(\phi_S).
\]

\subsection{The Hilbert scheme of rational quintics}
Let $S\subset B^\vee$ be a two-dimensional subspace. We first study the kernel $M_S$ of $S\otimes K\to U$.
\begin{lemma}\label{MSst}
For every $S\in \Gr(2,B^\vee)$, the following assertions hold:
\begin{enumerate}
\item The morphism $\phi_S$ is surjective,
\item The sheaf $M_S$ is a slope-stable vector bundle of rank seven and $c_1(M_S)=-3H$,
\item The map $\mu_S\colon A\otimes S\lr A^\vee$ is surjective, and 
      \[
      R_S\coloneqq \Hom(E,M_S)=\ker(\mu_S)
      \]
      has dimension four. Moreover, $\Ext^1(E,M_S)=0$.
\end{enumerate}
\end{lemma}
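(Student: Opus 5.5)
Part (1) is immediate from Proposition \ref{prop-surj}: since $\dim S=2$, the evaluation morphism $\phi_S\colon S\otimes K\to U$ is surjective. Consequently $M_S$ is locally free, being the kernel of a surjection of vector bundles. Its rank is $10-3=7$ and $c_1(M_S)=2c_1(K)-c_1(U)=-4H+H=-3H$. In $K_0(X)$ we have $[M_S]=2[K]-[U]$, which is exactly the class needed in Corollary \ref{cor-exact} with $n=3$.

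For part (3), I apply $\RHom(E,-)$ to $0\to M_S\to S\otimes K\to U\to 0$. The collection is strong, so $\Ext^{>0}(E,K)=\Ext^{>0}(E,U)=0$. This gives the exact sequence
\[
0\to \Hom(E,M_S)\to S\otimes A\xrightarrow{\ \mu_S\ } A^\vee\to \Ext^1(E,M_S)\to 0,
\]
and $\Ext^{\geq 2}(E,M_S)=0$. The middle map is composition, which is $\mu$ restricted to $S\otimes A$. The proof of Proposition \ref{prop-surj} shows that $\mu|_{S\otimes A}$ is surjective whenever $\dim S\ge 2$: choose $b\in S$ with $[b]\notin\Delta_X$, so that $q_b$ is an isomorphism. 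Hence $\Ext^1(E,M_S)=0$ and $\dim R_S=8-4=4$.

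Part (2), the slope-stability, is the main step. We have $\mu_H(M_S)=-3/7$. Let $F\subset M_S$ be a saturated subsheaf of rank $r\in\{1,\dots,6\}$ with $c_1(F)=aH$; we must show $a/r<-3/7$. Since $F\subset M_S\subset S\otimes K=K^{\oplus 2}$ and $K$ is stable, $K^{\oplus2}$ is semistable. This gives $a/r\le -2/5$, with equality only if $F=S'\otimes K$ for some $S'\subset S$, by Lemma \ref{lem;degree-six-polystable} applied to the saturation of $F$ in $S\otimes K$. Equality is impossible: $S'\otimes K\subset M_S$ would force a nonzero $b\in S$ with $b_X=0$, contradicting $S\subset\Hom(K,U)$. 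So $a/r<-2/5$, and it remains to exclude $-3/7\le a/r<-2/5$ with $r\le 6$. Checking $r=1,\dots,6$, the integer $a$ would have to satisfy $-3r/7\le a<-2r/5$. For $r\le 6$ the only solutions are $r=5$, $a=-3$? No: $-15/7\approx-2.14\le a<-2$ has no integer solution. Checking each remaining $r$ directly: $r=1$: $[-0.43,-0.4)$; $r=2$: $[-0.86,-0.8)$; $r=3$: $[-1.29,-1.2)$; $r=4$: $[-1.71,-1.6)$; $r=6$: $[-2.57,-2.4)$. None of these intervals contains an integer. Hence $\mu_H(F)<-3/7$, and $M_S$ is slope-stable.

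The only delicate point is the equality case in part (2), and it is settled by Lemma \ref{lem;degree-six-polystable}, so I expect no serious obstacle.
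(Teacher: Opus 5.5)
Your proof is correct and follows the paper's argument essentially step by step. Surjectivity of $\phi_S$ and of $\mu_S$ comes from Proposition \ref{prop-surj}, via some $b\in S$ with $[b]\notin\Delta_X$; applying $\Hom(E,-)$ then gives $\dim R_S=4$ and $\Ext^1(E,M_S)=0$; and stability follows by confining $\mu_H(F)$ to $[-3/7,-2/5]$, where the only boundary case $F\cong S'\otimes K$ is excluded by Lemma \ref{lem;degree-six-polystable}, just as in the paper.
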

\begin{proof}
Part (1) follows from Proposition \ref{prop-surj}. For (2), recall that $\rk(K)=5, c_1(K)=-2H$ and $\rk(U)=3, c_1(U)=-H$. Let $F\subset M_S$ be a nonzero saturated destabilizing subsheaf with $\rk(F)=r$, $1\leq r\leq 6$ and $c_1(F)=m H$. Then
\[
\mu(M_S)=-\frac{3}{7}\leq \mu(F)=\frac{m}{r}\leq -\frac{2}{5}=\mu(S\otimes K).
\]
The only possibility is $(r, m)=(5, -2)$, thus, $\mu(F)=-\frac{2}{5}$. However, all nonzero Jordan--H\"{o}lder factors of $S\otimes K$ are $K$ and thus $F\cong K\otimes \langle v\rangle$ for some nonzero $v\in S$. Hence, this implies that the corresponding map $v\in \Hom(K, U)$ is zero since $F$ is contained in the kernel bundle $M_S$. This contradicts the fact that $\dim S=2$.

For (3), choose $0\neq b\in S$ with $[b]\not\in \Delta_X$. The restriction of $\mu_S$ to $\langle b\rangle \otimes A$ is $q_b$, hence is an isomorphism. Applying $\Hom(E, -)$ to the short exact sequence $\ses{M_S}{S\otimes K}{U}$ gives
\begin{equation}\label{eq-deg5}
  0\longrightarrow \Hom(E, M_S)\longrightarrow S\otimes A\xrightarrow{\mu_S} A^\vee\longrightarrow \Ext^1(E, M_S)\longrightarrow 0,
\end{equation}
where the vanishing of the last term follows from $\Ext^1(E,S\otimes K)=0$. The map $\mu_S$ is surjective by Proposition \ref{prop-surj}. Consequently, we have $\Ext^1(E,M_S)=0$ and $\dim\Hom(E, M_S)=\dim\ker(\mu_S)=\dim(A\otimes S)-\dim A^\vee=2\times 4-4=4$.
\end{proof}

For a three-plane $W\subset R_S$, Corollary \ref{cor-exact} gives 
\begin{equation}\label{eq-deg5-eval}
  0\longrightarrow W\otimes E\longrightarrow M_S\longrightarrow \mathcal{I}_{\mathcal{C}_{S,W}}\longrightarrow 0,
\end{equation}
where $\mathcal{C}_{S,W}$ is a Cohen--Macaulay curve of degree five and arithmetic genus zero.

Let $P_5\coloneqq \Gr(2,B^\vee)$ and $\mathcal{S}_5\subset B^\vee \otimes \OO_{P_5}$ be the tautological bundle. On $P_5\times X$, define
\[
0\longrightarrow \mathcal{M}_5\longrightarrow \mathcal{S}_5\boxtimes K\longrightarrow U\longrightarrow 0.
\]
By \eqref{eq-deg5}, the kernels of the universal maps form a rank-four vector bundle $\mathcal{R}_5$ on $P_5$, fitting into the exact sequence
\[
0\longrightarrow \mathcal{R}_5\longrightarrow \mathcal{S}_5\otimes A\longrightarrow A^\vee\otimes \OO_{P_5}\longrightarrow 0.
\]
In particular, $\mathcal{R}_5$ has rank four. Let $\pi_5\colon G_5\coloneqq \Gr_{P_5}(3,\mathcal{R}_5)\to P_5$ and let $\mathcal{W}_5\subset \pi_5^{\ast}\mathcal{R}_5$ be the tautological rank-three bundle. Let $p_5\colon G_5\times X\to G_5$ and $q_5\colon G_5\times X\to X$ be the projections. The universal evaluation morphism is
\[
\mathbf{u}_5\colon \mathcal{W}_5\boxtimes E\to \pi_5^{\ast}\mathcal{M}_5.
\]
Note that we have
\[
\det(\pi_5^{\ast}\mathcal{M}_5)\otimes \det(\mathcal{W}_5\boxtimes E)^\vee \cong p^{\ast}_{5}\Lambda_5,
\]
where $\Lambda_5=\pi_5^{\ast}(\det \mathcal{S}_5)^5\otimes (\det \mathcal{W}_5)^{-2}$.
Proposition \ref{prop-family} gives a flat family and a morphism 
\begin{equation}\label{pi5m}
\Psi_5\colon G_5\longrightarrow \bH_5(X),\quad (S,W)\mapsto [C_{(S,W)}].
\end{equation}
We next show that the ideal sheaf $I_{C_{S,W}}$ determines both $S$ and $W$.

\begin{lemma}\label{lem-deg5-inj}
  The ideal sheaf $I=\mathcal{I}_{C_{S,W}}$ determines the pair $(S,W)$. In particular, $\Psi_5$ is injective on closed points.
\end{lemma}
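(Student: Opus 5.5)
The plan follows the degree-four argument (Proposition \ref{thm-deg4-iso}): from $I$ alone we first recover $S$, then $M_S$ together with the quotient map $M_S\to I$, and finally $W$ by Lemma \ref{lem-recover W}. The main tool is to compute $\RHom(K,I)$ and $\RHom(U,I)$ from the two sequences
\[
0\to W\otimes E\to M_S\to I\to 0,\qquad 0\to M_S\to S\otimes K\to U\to 0.
\]
Since $\RHom(K,E)=\RHom(U,E)=0$, we get $\RHom(K,I)\cong\RHom(K,M_S)$ and $\RHom(U,I)\cong\RHom(U,M_S)$. Because the collection is strong and exceptional, $\RHom(U,K)=0$ and $\RHom(U,U)=\C$, so $\RHom(U,M_S)=\C[-1]$. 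Also $\RHom(K,K)=\C$ and $\RHom(K,U)=B^\vee$ (concentrated in degree zero), so $\RHom(K,M_S)$ is the cone of $S\hookrightarrow B^\vee$ shifted by one. This gives $\Hom(K,I)=0$ and $\Ext^1(K,I)\cong B^\vee/S$. Hence $S=\ker\bigl(B^\vee\to \Ext^1(K,I)\bigr)$ is determined by $I$. The cleanest intrinsic way to express this is via the connecting map. The unique (up to scalar) class in $\Ext^1(U,I)=\C$ gives an extension $0\to I\to \mathcal{G}\to U\to 0$. The Yoneda product $\Hom(K,U)\otimes\Ext^1(U,I)\to\Ext^1(K,I)$ is then the natural map $B^\vee\to\Ext^1(K,I)$, and its kernel is $S$.

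Next we recover $M_S$ and the quotient $M_S\to I$. The object $M_S$ is determined by $S$. We still need $\Hom(M_S,I)=\C$, so that the quotient is unique up to scalar. Apply $\Hom(-,I)$ to the evaluation sequence. Lemma \ref{lem-endo} gives $\Hom(I,I)=\C$, and we will also use $\Hom(W\otimes E,I)=W^\vee\otimes\Hom(E,I)$. It then suffices to check that the composition $\Hom(M_S,I)\to\Hom(W\otimes E,I)$ vanishes on the complement of the image of $\Hom(I,I)$. The cleanest route is instead to apply $\Hom(M_S,-)$ to the evaluation sequence. Here $\Hom(M_S,E)=0$ by stability, since $\mu(M_S)=-3/7>-1/2=\mu(E)$. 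For $\Ext^1(M_S,E)$, apply $\Hom(-,E)$ to the defining sequence of $M_S$: this gives $\Ext^1(M_S,E)\hookrightarrow\Ext^2(U,E)=0$, using $\RHom(K,E)=0$. Therefore $\Hom(M_S,I)\cong\Hom(M_S,M_S)=\C$, the last equality by stability (Lemma \ref{MSst}).

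Finally, Lemma \ref{lem-recover W} applied to the evaluation sequence gives $W=\ker\bigl(R_S\to\Hom(E,I)\bigr)$. Since $S$ and $W$ are both recovered from $I$, the map $\Psi_5$ is injective on closed points. I expect the first step to be the main obstacle: identifying $S$ canonically from $I$. One must check that the map $B^\vee\to\Ext^1(K,I)$ built intrinsically from $I$ agrees, up to the isomorphisms above, with the quotient map $B^\vee\to B^\vee/S$. This requires tracking the connecting homomorphisms carefully through the two short exact sequences.
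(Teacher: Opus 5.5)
Your proposal is correct and is essentially the paper's argument. By the long exact sequence of the extension $0\to I\to P_C\to U\to 0$, the kernel of $b\mapsto e\circ b$, $B^\vee\to\Ext^1(K,I)$, is exactly the image of $\Hom(K,P_C)\to B^\vee$ used in the paper; the compatibility you flag is immediate once you note that this extension is the pushout of $0\to M_S\to S\otimes K\to U\to 0$ along $M_S\to I$, so $P_C\cong (S\otimes K)/(W\otimes E)$ and $\Hom(K,P_C)\cong S$ since $\RHom(K,E)=0$. Your recovery of $W$ through $\Hom(M_S,I)\cong\mathrm{End}(M_S)=\C$ and the kernel of $R_S\to\Hom(E,I)$ matches the paper as well, which obtains $\RHom(M_S,E)=0$ directly from semiorthogonality rather than from slopes.
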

\begin{proof}
  Applying $\RHom(U,-)$ to the exact sequences $\ses{M_S}{S\otimes K}{U}$ and $\ses{W\otimes E}{M_S}{I}$, and using $\RHom(U,E)=\RHom(U,K)=0$, we obtain $\Ext^1(U,I)\cong \C$. Let
  \[
  0\longrightarrow I\longrightarrow P_C\longrightarrow U\longrightarrow 0
  \]
  be the unique nonzero extension. Then, for the pair $(S,W)$, we have $P_C\cong (S\otimes K)/(W\otimes E)$. Moreover, this extension is nonsplit. Indeed, the resolution $\ses{W\otimes E}{S\otimes K}{P_C}$ and the semiorthogonality give $\Hom(U,P_C)=0$. Applying $\Hom(K,-)$ to the resolution gives $\Hom(K,P_C)\cong S$. The image of the map induced by $P_C\to U$,
  \[
  \Hom(K,P_C)\longrightarrow \Hom(K,U)=B^\vee
  \]
  is precisely the two-plane $S$. Hence, $S$ is recovered from $I$.

  Once $S$ has been recovered, the bundle $M_S$ is determined. The semiorthogonality gives $\RHom(M_S,E)=0$ and therefore
  \[
  \Hom(M_S,I)\cong \mathrm{End}(M_S)=\C.
  \]
  By Lemma \ref{lem-recover W}, we conclude the proof.
\end{proof}

\begin{proposition}\label{thm-deg5-iso}
  The morphism $\Psi_5$ in \eqref{pi5m} is an isomorphism.
\end{proposition}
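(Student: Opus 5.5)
We follow the template of Proposition \ref{thm-deg4-iso}. Lemma \ref{lem-deg5-inj} already gives that $\Psi_5$ is injective on closed points. Since $G_5=\Gr_{P_5}(3,\mathcal{R}_5)$ is a $\P^3$-bundle over $P_5\cong\P^2$, it is smooth, projective and irreducible of dimension $5$. Hence $5\le \dim_{[\mathcal{C}_{S,W}]}\bH_5(X)$ at every point of the image. If we show $\mathrm{ext}^1(I,I)\le 5$ for $I=\mathcal{I}_{\mathcal{C}_{S,W}}$, then Proposition \ref{prop-Hilb-tan} makes $\bH_5(X)$ smooth of dimension $5$ along the image. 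Because $G_5$ is proper, the image is then a closed subset of dimension $5$ in a smooth $5$-dimensional scheme, so it is a union of connected components. We take it to be the component $\bH_5(X)$ under consideration. Zariski's Main Theorem then makes the injective birational morphism $\Psi_5$ onto a smooth (normal) target an isomorphism.

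The tangent space computation has three steps.

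First, from $0\to M_S\to S\otimes K\to U\to 0$ and $0\to W\otimes E\to M_S\to I\to 0$ we compute the relevant groups. We have $\RHom(E,M_S)=R_S$, using $\Ext^1(E,M_S)=0$ from Lemma \ref{MSst}. This gives $\Hom(E,I)=R_S/W$ and $\Ext^{>0}(E,I)=0$. Lemma \ref{lem-endo} gives $\Hom(I,I)=\C$.

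Second, we apply $\RHom(-,I)$ to $0\to W\otimes E\to M_S\to I\to 0$. This gives
\[
0\to \Hom(I,I)\to \Hom(M_S,I)\to \Hom(W\otimes E,I)\to \Ext^1(I,I)\to \Ext^1(M_S,I)\to \Ext^1(W\otimes E,I)=0.
\]
By Lemma \ref{lem-deg5-inj}, $\Hom(M_S,I)\cong\C$, so the first map is an isomorphism. The sequence therefore reduces to
\[
0\to W^\vee\otimes(R_S/W)\to\Ext^1(I,I)\to\Ext^1(M_S,I)\to 0.
\]
The left-hand term has dimension $3$, which is the tangent space to the fibre $\Gr(3,R_S)$.

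Third, we show $\dim\Ext^1(M_S,I)=2=\dim T_{[S]}P_5$. Since $\RHom(M_S,E)=0$, we have $\Ext^1(M_S,I)\cong\Ext^1(M_S,M_S)$. We compute the latter by applying $\RHom(-,M_S)$ and $\RHom(M_S,-)$ to $0\to M_S\to S\otimes K\to U\to 0$. The inputs needed are the following.
\begin{itemize}
\item[(a)] $\RHom(U,M_S)$. From $\RHom(U,K)=0$ and $\RHom(U,U)=\C$ this is $\C[-1]$.
\item[(b)] $\RHom(K,M_S)$. This is the cone of $S\otimes\C\to B^\vee$, i.e.\ the inclusion $S\hookrightarrow B^\vee$. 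It is concentrated in degree $1$, with $\Ext^1(K,M_S)=B^\vee/S$.
\end{itemize}
Then $\RHom(M_S,M_S)$ sits in a triangle with $S^\vee\otimes\RHom(K,M_S)=S^\vee\otimes B^\vee/S[-1]$ and $\RHom(U,M_S)[1]=\C$. Since $\Hom(M_S,M_S)=\C$ by stability, this gives $\Ext^1(M_S,M_S)\cong S^\vee\otimes B^\vee/S$, of dimension $2$. Hence $\mathrm{ext}^1(I,I)=3+2=5$.

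The main obstacle is this last step. The delicate point is the bookkeeping of the connecting maps, in particular checking that $\Hom(U,M_S)\to\Ext^1(U,M_S)$ contributes nothing extra to $\Ext^1$, so that we really get the bound $\le 5$ rather than a larger number. We would first verify it through the Euler characteristic $\chi(M_S,M_S)=\chi(S\otimes K-U,S\otimes K-U)$. With exceptionality and semiorthogonality this equals $4+1-2\cdot 3=-1$. It matches $1-2$ provided $\Ext^{\ge2}(M_S,M_S)=0$, and that vanishing follows from the same long exact sequences. This consistency check, together with the local-to-global comparison with $T_{[S]}\Gr(2,B^\vee)=S^\vee\otimes B^\vee/S$, would settle the count.
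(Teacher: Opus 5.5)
Your proposal is correct and follows the paper's proof. Injectivity comes from Lemma \ref{lem-deg5-inj}, and applying $\RHom(-,I)$ together with $\RHom(M_S,E)=0$ gives $0\to\Hom(W,R_S/W)\to\Ext^1(I,I)\to\Ext^1(M_S,M_S)\to 0$. With $\Ext^1(M_S,M_S)\cong\Hom(S,B^\vee/S)$ this yields $\mathrm{ext}^1(I,I)=5$, and the conclusion follows from Zariski's Main Theorem as in degree four. Your derivation of $\Ext^1(M_S,M_S)$ from $\RHom(U,M_S)=\C[-1]$ and $\RHom(K,M_S)=(B^\vee/S)[-1]$ is exactly the verification the paper leaves as ``easy''. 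The closing Euler-characteristic check is harmless but unnecessary, since your long exact sequence already settles the count.
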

\begin{proof}
  By Lemma \ref{lem-deg5-inj}, $\Psi_5$ is injective on closed points. On the other hand, one can easily see that by its construction, $\Ext^1(M_S,M_S)\cong \Hom(S,B^\vee/S)$ and $\Ext^i(M_S,M_S)=0$ for $i\geq 2$. By applying $\RHom(E,-)$ to \eqref{eq-deg5-eval}, we obtain
  \[
  \Hom(E,I)\cong R_S/W,\quad \Ext^{>0}(E,I)=0.
  \]
  Since $\RHom(M_S,E)=0$, we also have
  \[
  \RHom(M_S,I)\cong \RHom(M_S,M_S).
  \]
  By Lemma \ref{lem-endo}, the map $\Hom(I,I)\to \Hom(M_S,I)$ is an isomorphism. Applying $\RHom(-,I)$ to \eqref{eq-deg5-eval}, we obtain the exact sequence
  \[
  0\longrightarrow \Hom(W,R_S/W)\longrightarrow \Ext^1(I,I)\longrightarrow \Hom(S,B^\vee/S)\longrightarrow 0.
  \]
In particular, $\dim\Ext^1(I,I)=5$. By the same argument as in Proposition \ref{thm-deg4-iso}, the morphism $\Psi_{5}$ is an isomorphism.
\end{proof}

\begin{proposition}\label{exi5}
A general point of $G_5$ parametrizes a smooth rational quintic curve on $X$.
\end{proposition}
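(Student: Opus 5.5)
The plan follows the proof of Proposition \ref{exi4}. Since $G_5$ is irreducible and the universal family of Proposition \ref{prop-family} is flat over $G_5$, the locus of smooth fibers is open. It therefore suffices to show that some smooth rational quintic $C\subset X$ is of the form $\mathcal{C}_{S,W}$. By \cite[Theorem 1.2]{BJ22} we may start from a free smooth rational quintic $C\cong\P^1$ lying in a component of the expected dimension $5$. We then use the Beilinson spectral sequence of \cite[Corollary 7.3]{Fae07} to recover the complex in Table \ref{tab:rateulr} for $d=5$, namely
\[
0\lr 3E\lr 2K\lr U\lr 0,
\]
with middle cohomology $I_C$.

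The cohomology is computed on $\P^1$. By the Grauert--M\"ulich type result of \cite[Proposition 3.1]{PRT24}, $E|_C$ is balanced of rank $2$ and degree $-5$, so $E|_C\cong\OO(-2)\oplus\OO(-3)$. Similarly $U|_C$ has rank $3$ and degree $-5$, so $U|_C\cong\OO(-2)^{\oplus2}\oplus\OO(-1)$. Hence $H^0(C,E|_C)=H^0(C,U|_C)=0$, and Lemma \ref{lem:qv} gives $H^0(C,Q^\vee|_C)=0$. Riemann--Roch then yields
\[
h^1(E|_C)=3,\quad h^1(U|_C)=2,\quad h^1(Q^\vee|_C)=\chi\text{-value }1.
\]
The last value follows from $\ch(Q^\vee)=\ch(B\otimes U)-\ch(K)$, which gives rank $4$ and degree $-5\cdot 3+10=-5$ on $C$, so $h^1=1$. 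The spectral sequence thus degenerates to a complex $V_3\otimes E\to V_2\otimes K\xrightarrow{\phi} U$ with $\dim V_3=3$, $\dim V_2=2$, exact at the ends and with middle cohomology $I_C$.

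The main step is to show that this complex has the required shape, and here I would argue as follows.

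\emph{Identify $S$.} The map $\phi$ is surjective, since it is the last map of a resolution and its cokernel vanishes. This forces the induced map $V_2\to\Hom(K,U)=B^\vee$ to be injective. Otherwise $\phi$ factors through a single $b\colon K\to U$. If $[b]\in\Delta_X$ this map is not surjective by Proposition \ref{linesv22}. If $[b]\notin\Delta_X$, a rank count on $\ker\phi$ contradicts the Chern character of $I_C$. So $V_2$ determines $S\in\Gr(2,B^\vee)$ with $\ker\phi=M_S$.

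\emph{Identify $W$.} The complex then becomes $0\to V_3\otimes E\to M_S\to I_C\to0$. Applying $\Hom(E,-)$ as in Lemma \ref{lem-recover W} shows that $V_3\hookrightarrow\Hom(E,M_S)=R_S$, which is exactly the data of a point $W=V_3\in\Gr(3,R_S)$. Injectivity of $V_3\otimes E\to M_S$ follows from Proposition \ref{prop-inj}.

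Comparing maximal-minor loci then gives $C=\mathcal{C}_{S,W}$, so $[C]=\Psi_5(S,W)$. Openness of smoothness finishes the proof.

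The expected obstacle is verifying the splitting types and the exact degeneration of the spectral sequence, in particular ruling out nonzero differentials or extra terms such as $H^0$ or $H^2$ contributions coming from $\OO_X$ and $I_C$ twists. I would first confirm these using the vanishings $H^i(I_C\otimes\,\cdot\,)$ read off from the sequences restricted to $C$, exactly as in the degree-four case.
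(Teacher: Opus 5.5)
Your outline is the paper's: take a free smooth rational quintic, use the balanced splitting types and Lemma~\ref{lem:qv} to reduce Faenzi's Beilinson spectral sequence to $0\to V_3\otimes E\xrightarrow{\alpha}V_2\otimes K\xrightarrow{\phi}U\to 0$ with middle cohomology $I_C$. Then show that $V_2\to B^\vee$ is injective, read off $W=V_3\subset R_S$, and conclude by openness of smoothness in the flat family over $G_5$. The numbers $h^1=3,2,1$ and the identification of $W$ are fine.

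The gap is in the step you single out as the main one. Suppose $V_2\to B^\vee$ has rank one with image $\langle b\rangle$ and $[b]\notin\Delta_X$. You claim that ``a rank count on $\ker\phi$ contradicts the Chern character of $I_C$''. This is false. Whenever $\phi$ is surjective, $\ker\phi$ has rank $7$, $c_1=-3H$ and $\ch(\ker\phi)=2\ch(K)-\ch(U)$. These are exactly the invariants of $M_S$ in the good case, so no numerical invariant of $\ker\phi$ detects the degeneration. Even $\dim\Hom(E,\ker\phi)=4$ agrees with $\dim R_S$.

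What rules this case out is the structure of $\ker\phi$ combined with the injectivity of $\alpha$, which is the paper's argument.

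\begin{enumerate}
\item Choose a basis $v_1,v_2$ of $V_2$ with $v_1\mapsto 0$. Then $\ker\phi\cong K\oplus\ker(b_X)$.
\item Apply $\Hom(E,-)$ to $0\to\ker(b_X)\to K\xrightarrow{b_X}U\to 0$. Since $\Ext^1(E,K)=0$, this gives $\Hom(E,\ker b_X)=\ker\bigl(q_b\colon A\to A^\vee\bigr)$. This is zero because $q_b$ is an isomorphism for $[b]\notin\Delta_X$.
\item Hence $\alpha$ lands in the summand $K$, i.e.\ it is a map $V_3\otimes E\to K$ from a rank-$6$ bundle to a rank-$5$ bundle. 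Such a map cannot be injective, contradicting left exactness of the Beilinson complex. Equivalently, the middle cohomology would contain $\ker(b_X)$, of rank $2$, as a direct summand, contradicting $\rk I_C=1$.
\end{enumerate}

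The key input you are missing is $\Hom(E,\ker b_X)=0$. With that replacement, your argument goes through. You should also note explicitly that $V_2\to B^\vee$ cannot be zero, since $\phi$ is surjective.
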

\begin{proof}
Let $C\subset X$ be a general free smooth rational quintic curve. The same vanishing arguments as in the proof of Proposition \ref{exi4}, together with the Riemann--Roch theorem, reduce the Beilinson spectral sequence to a complex
\begin{equation}\label{exis5}
0\lr V_3\otimes E
\stackrel{\alpha}{\longrightarrow}
V_2\otimes K
\stackrel{\beta}{\longrightarrow}
V_1\otimes U\lr0,\quad \dim V_i=i,
\end{equation}
whose middle cohomology is $I_C$. The morphism $\beta$ is determined by a linear map
\[
V_2\longrightarrow B^\vee=\Hom(K,U).
\]
This map is injective. Indeed, otherwise one obtains a contradiction to the injectivity of the map $\alpha$ in \eqref{exis5}. Therefore, this curve $C$ is parametrized by $G_5$, which completes the proof.
\end{proof}

\subsection{Bisecant conics and the forgetful morphism}\label{subsec-1conic}
A general rational quintic on $X$ has a unique bisecant conic (Lemma \ref{lem-num-bisec}). We identify this conic with the image under the natural projection
\[
\rho_5\colon \bH_5(X)\cong G_5\lr P_5\cong \bH_2(X)\cong \PP^2.
\]

\begin{proposition}\label{prop-ubconic}
Let $[C]\in \rho_5^{-1}([Q])$, where $C\subset X$ is a smooth rational quintic and $Q\subset X$ is a smooth conic. Then $Q$ is a bisecant conic to $C$. For a general point $[C]\in \bH_5(X)$, the conic $Q$ is the unique bisecant conic to $C$.
\end{proposition}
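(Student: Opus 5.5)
We need to show: if $C$ corresponds to $(S,W)$ with $S\in \Gr(2,B^\vee)\cong \P(B)$ corresponding to the conic $Q$, then $\length(C\cap Q)\geq 2$. The approach is to realize $Q$ as a degeneracy locus attached to $S$, and then read off the intersection from the resolution of $\cI_C$.

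First, recall from \cite[Lemma 3.2]{AF06} how the conic $Q=Q_S$ arises from the two-plane $S$. The evaluation $\phi_S\colon S\otimes K\to U$ is surjective with kernel $M_S$. Dually, the conic is the locus where the induced map of the $E$-parts degenerates. Concretely, I expect an exact sequence of the form
\[
0\lr M_S\lr \ldots\lr \cI_Q\otimes(\text{line bundle})\lr 0,
\]
or, equivalently, that $Q$ is the support of the cokernel of the dual evaluation $E\otimes \Hom(E,M_S)^{\vee}\to\ldots$ restricted suitably. I would make this precise by checking Chern characters: $\ch(M_S)=2\ch(K)-\ch(U)$, and compare with $4\ch(E)$ plus $\ch(\cI_Q)$ up to twist. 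The natural guess is
\[
0\lr R_S\otimes E\lr M_S\lr \mathcal{F}_Q\lr 0,
\]
where $\mathcal{F}_Q$ is a rank-one sheaf supported on $Q$ (rank $7-8<0$ fails). So instead I will use the restriction $M_S|_Q$: I expect $Q$ to be exactly the locus where the evaluation $R_S\otimes E\to M_S$ has fiber rank $\le 6$, with the rank dropping to $6$ along $Q$. This rank drop is the step I expect to be the main obstacle. I would verify it via the Kuznetsov description of $U|_Q$ and $K|_Q$, using that on a conic $S\otimes K|_Q\to U|_Q$ acquires a special splitting.

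Given this, the curve $C=\mathcal{C}_{S,W}$ is the locus where $W\otimes E\to M_S$ drops rank, for a three-plane $W\subset R_S$. At $x\in Q$, the image of $R_S\otimes E_x$ in $(M_S)_x$ has dimension $\le 6$, i.e. its kernel $\kappa_x\subset R_S\otimes E_x$ is nonzero. Then $x\in C$ iff $\kappa_x$ meets $W\otimes E_x$ nontrivially. Generically $\kappa_x$ is one-dimensional, spanned by a tensor $r\otimes e$ of rank one or two. Hence $x\mapsto\kappa_x$ gives a map from $Q\cong\P^1$ to $\P(R_S\otimes E)$, and the condition of meeting $W\otimes E_x$ is a divisorial condition on $Q$. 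Its degree is computed as the degree of the composite section of $(R_S/W)\otimes E|_Q\otimes\kappa^\vee$, restricted to a line bundle. I would compute this as $c_1$ of a rank-one quotient on $Q$ and expect the value $2$. This gives $\length(C\cap Q)\ge 2$, with equality for general $C$, and the containment holds scheme-theoretically.

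For uniqueness, Lemma \ref{lem-num-bisec} gives $N_5=\binom{2}{2}=1$ for a general rational quintic. Since $Q=\rho_5([C])$ is a bisecant conic of $C$, it must be the unique one. To handle generality, Proposition \ref{exi5} ensures that a general point of $G_5$ is a smooth rational quintic, and the general point of $\rho_5^{-1}([Q])$ has $Q$ smooth because the singular conics form a curve in $\P^2$.
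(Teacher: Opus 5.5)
Your uniqueness step (Lemma \ref{lem-num-bisec} gives $N_5=1$) is the same as the paper's. The bisecancy argument, however, has a genuine gap. Everything rests on how $R_S\otimes E\to M_S$ behaves along $Q$, and you only announce this (``via the Kuznetsov description of $U|_Q$ and $K|_Q$'') without giving a mechanism. The missing idea is to produce $Q$ homologically from $S$. Combining $0\to\lambda_S\otimes U\to\mathcal{Q}^\vee\to\cI_Q\to0$, where $\lambda_S=S^\perp\subset B$, with $0\to K\to B\otimes U\to\mathcal{Q}^\vee\to0$ gives $0\to K\to S^\vee\otimes U\to\cI_Q\to0$. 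Dualizing into $\omega_X=\OO_X(-1)$, using that $Q$ is Cohen--Macaulay of codimension two, gives
\[
0\lr\OO_X(-1)\lr S\otimes\wedge^2U\xrightarrow{\ \delta_S\ }K^\vee(-1)\lr\omega_Q\lr0 .
\]
Now map $S\otimes\bigl(0\to\wedge^2U\to A\otimes E\to K\to0\bigr)$ to $0\to K^\vee(-1)\to A^\vee\otimes E\to U\to0$ by $(\delta_S,\mu_S\otimes\mathrm{id}_E,\phi_S)$. The last two maps are surjective with kernels $R_S\otimes E$ and $M_S$, so the snake lemma yields
\[
0\lr\OO_X(-1)\lr R_S\otimes E\lr M_S\lr\omega_Q\lr0 .
\]
This four-term sequence is the precise content of your ``rank drops to $6$ along $Q$'', and the paper's proof is built on it.

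Your fiberwise count on $Q$ also fails as written. Since $\rk(R_S\otimes E)=8>7=\rk M_S$, the kernel $\kappa_x$ is nonzero at every point of $X$, so its nonvanishing on $Q$ carries no information. Off $Q$ it is the line $\OO_X(-1)_x$. On $Q$ it is two-dimensional, because the section of $R_S\otimes E(1)$ giving the first map vanishes along $Q$. Moreover, if $\kappa$ were a line on $Q$ as you assume, the condition $\kappa_x\subset W\otimes E_x$ would be the vanishing of a section of the rank-two bundle $(R_S/W)\otimes E|_Q\otimes\kappa^\vee$. That is a codimension-two condition on $Q$, not a divisor, so the ``expected value $2$'' has no basis.

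Once you have the four-term sequence, your idea can be repaired. The bundle $\kappa|_Q$ has rank two and degree $\deg(R_S\otimes E|_Q)-\bigl(\deg M_S|_Q-\deg\omega_Q\bigr)=-8-(-6+2)=-4$. Hence $\det\bigl(\kappa\to(R_S/W)\otimes E|_Q\bigr)$ cuts out a divisor of degree $-2+4=2$ on $Q$, and one checks via Fitting ideals that this divisor is $C\cap Q$. The paper instead applies the snake lemma once more to $W\otimes E\hookrightarrow M_S$, obtaining
\[
0\lr J\lr\cI_C\lr\omega_Q\lr0,\qquad J=\mathrm{coker}\bigl(\OO_X(-1)\to(R_S/W)\otimes E\bigr).
\]
Here $J=\cI_{C\cup Q}$ is the ideal of the zero scheme of a section of $(R_S/W)\otimes E(1)\cong E(1)$, a curve of degree $7$ and arithmetic genus $1$. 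Hence $\cI_{C\cap Q/Q}\cong\cI_C/\cI_{C\cup Q}\cong\omega_Q\cong\OO_{\P^1}(-2)$, so $\length(C\cap Q)=2$ for every $C$ in the fiber, not only for a general one.
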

\begin{proof}
Write $C=C_{S,W}$, and let $Q$ be the conic corresponding to $S$. By \cite[Lemma 3.2]{AF06}, we have a natural isomorphism \(P_5\coloneqq\Gr(2,B^\vee)\cong \P(B)\), and $\P(B)$ is naturally identified with the Hilbert scheme \(\bH_2(X)\) of conics on \(X\). For \(S\in P_5\), let \(\lambda_S\coloneqq S^\perp\subset B\).

The evaluation morphism \(\lambda_S\otimes U\to \mc Q^\vee\) has degeneracy locus \(Q\)
and fits into an exact sequence
\[
  0\longrightarrow \lambda_S\otimes U
  \longrightarrow \mc Q^\vee
  \longrightarrow I_{Q}
  \longrightarrow0.
\]
Comparing this sequence with
\[
  0\longrightarrow K
  \longrightarrow B\otimes U
  \longrightarrow \mc Q^\vee
  \longrightarrow0
\]
and using \(B/\lambda_S\cong S^\vee\), we obtain
\begin{equation}\label{star1}
  0\longrightarrow K
  \longrightarrow S^\vee\otimes U
  \longrightarrow I_{Q}
  \longrightarrow0.
\end{equation}
Dualizing \eqref{star1} with respect to $\omega_X$ gives
\[
0 \longrightarrow 
\mathcal{H}om_X(I_{Q},\omega_X)
\longrightarrow 
S\otimes U^\vee(-1)
\longrightarrow 
K^\vee(-1)
\longrightarrow 
\mathcal{E}xt_X^1(I_{Q},\omega_X)
\longrightarrow 0.
\]
Since $Q$ is Cohen--Macaulay of codimension two, we have
\[
\mathcal{H}om_X(I_{Q},\omega_X)\cong \omega_X, \quad
\mathcal{E}xt_X^1(I_{Q},\omega_X)
\cong
\mathcal{E}xt_X^2(\mathcal{O}_{Q},\omega_X)
\cong
\omega_{Q}.
\]
Moreover, since $\omega_X\cong\mathcal{O}_X(-1)$ and $U^\vee(-1)\cong \wedge^2 U$,
we obtain
\begin{equation}\label{star3}
  0\longrightarrow\OO_X(-1)
  \longrightarrow S\otimes\wedge^2U
  \xrightarrow{\delta_S}
  K^\vee(-1)
  \longrightarrow\omega_{Q}
  \longrightarrow0.
\end{equation}
The map $\delta_S$ is the dual of the inclusion map $K\lr S^\vee\otimes U$. Equivalently, under the natural isomorphism $B^\vee=\Hom(\wedge^2 U, K^\vee(-1))$, it is the restriction to $S\subset B^\vee$ of the universal evaluation map $B^\vee\otimes\wedge^2 U\lr K^\vee(-1)$.

Let \(\mu_S=\mu|_{S\otimes A}\colon S\otimes A\longrightarrow A^\vee\) be the restriction of the map \(\mu\colon B^\vee\otimes A\to A^\vee\) in \eqref{orm}, and set \(R_S\coloneqq\ker(\mu_S)\). The map \(\mu_S\) is surjective by the proof of Proposition \ref{prop-surj}. In particular, \(\dim R_S=4\). Dualizing and twisting \eqref{eq-af-one} gives
\[
  0\longrightarrow K^\vee(-1)
  \longrightarrow A^\vee\otimes E
  \longrightarrow U
  \longrightarrow0.
\]
The evaluation morphisms fit into the commutative diagram
\[
\begin{array}{ccccccccc}
0
&\longrightarrow&
S\otimes\wedge^2U
&\longrightarrow&
S\otimes A\otimes E
&\longrightarrow&
S\otimes K
&\longrightarrow&
0
\\
&&
\big\downarrow{\delta_S}
&&
\big\downarrow{\mu_S\otimes\mathrm{id}_E}
&&
\big\downarrow{\varphi_S}
\\
0
&\longrightarrow&
K^\vee(-1)
&\longrightarrow&
A^\vee\otimes E
&\longrightarrow&
U
&\longrightarrow&
0.
\end{array}
\]
The kernels of the middle and right vertical morphisms are
\(R_S\otimes E\) and \(M_S\), respectively. The snake lemma and
\eqref{star3} give
\[
  0\longrightarrow\OO_X(-1)
  \longrightarrow R_S\otimes E
  \longrightarrow M_S
  \longrightarrow\omega_{Q}
  \longrightarrow0.
\]
For $[W]\in \Gr(3,R_S)$, the induced map $W\otimes E\lr M_S$ is injective. Applying the snake lemma again, we have an exact sequence 
\[
\ses{J\coloneqq \mathrm{coker}\{\cO_X(-1)\lr (R_S/W)\otimes E\}}{I_{C}}{\omega_{Q}}.
\]
One can see that $J=J_{C\cup Q}$ and thus, $I_{C\cap Q/Q}\cong \omega_{Q}=\cO_{Q}(-1)$. In fact, $J$ is the ideal sheaf of a curve of degree $7$ and genus $1$ in $X$. Therefore, the conic $Q$ is bisecant to $C$.
\end{proof}

Let \(Q\subset X\) be a smooth conic. The Sarkisov link centered at $Q$ constructed by Kuznetsov--Prokhorov has the form $X\dashrightarrow Y_Q$, where \(Y_Q\subset \mathbf P^4\) is a smooth quadric threefold; see \cite{KP18}. If $\sigma\colon \widetilde{X}=\mathrm{Bl}_Q(X)\to X$ is the blow-up and $E_Q$ is the exceptional divisor, the induced morphism $\widetilde{X}\to Y_Q$ is defined by $|\sigma^{\ast}H-2E_Q|$. For a general $C\in\rho_5^{-1}([Q])$, 
Proposition \ref{prop-ubconic} shows that \(Q\) is bisecant to \(C\). Hence, for the strict transform $\widetilde{C}\subset \widetilde{X}$ of $C$, we have $(\sigma^{\ast}H-2E_Q)\cdot\widetilde C = 5-2\cdot2 = 1$. Therefore, the Sarkisov link maps $\widetilde{C}$ onto a line in $Y_Q$. The next proposition describes both $Y_Q$ and the fiber of $\rho_5$ in terms of $R_S$.

\begin{proposition}\label{geme}
Let $[S]\in P_5$ correspond to a smooth conic \(Q=Q_S\subset X\).
Then there is a unique line $\CC\cdot\omega_Q\subset\wedge^2R_S^\vee$
spanned by a nondegenerate alternating form such that
\[
Y_Q\cong \mathrm{LG}(2,R_S,\omega_Q).
\]
Furthermore, the form $\omega_Q$ induces natural isomorphisms
\[
\rho_5^{-1}([Q])=\Gr(3,R_S)\xrightarrow{\sim}\Gr(1,R_S)\xrightarrow{\sim}\bH_1(Y_Q).
\]
\end{proposition}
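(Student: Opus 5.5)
The plan is to find the alternating form $\omega_Q$ inside the defining data, and then identify both $Y_Q$ and the fibre of $\rho_5$ through it.

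\emph{Construction of $\omega_Q$.} Fix a basis $s_1,s_2$ of $S$. An element of $R_S=\ker(\mu_S)\subset S\otimes A$ is a pair $(a_1,a_2)\in A^{\oplus 2}$ with $q_{s_1}(a_1)+q_{s_2}(a_2)=0$. For two such pairs set
\[
\omega(r,r')\coloneqq q_{s_1}(a_1,a_2')-q_{s_1}(a_1',a_2).
\]
\begin{enumerate}[(a)]
\item Using the defining relation $q_{s_1}(a_1)=-q_{s_2}(a_2)$ together with the symmetry of the forms $q_b$, I will check that $\omega$ is alternating.
\item I will check that $\omega$ changes only by the scalar $\det$ under a change of basis of $S$. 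So $\omega$ depends only on $S$ up to scaling.
\end{enumerate}
More intrinsically, $\omega$ is the composite $R_S\otimes R_S\to \wedge^2 S\otimes S^2A\to \wedge^2S\otimes\C$ obtained by pairing through $\mu$. This is the form I take as $\omega_Q$.

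\emph{Nondegeneracy and uniqueness.}
\begin{enumerate}[(a)]
\item Suppose $\omega(r,-)=0$ for some $r=(a_1,a_2)\neq0$. Then $q_{s_1}(a_1,-)$ and $q_{s_1}(a_2,-)$ are forced into special position. Varying the basis of $S$, I expect this to give a nonzero $a\in A$ with $\rk(\rho_a|_S)\le1$. That in turn would put a point of $\P(S)$ into $\Delta_X$ with extra tangency, or violate Lemma \ref{lem;degree-six-rank}. The smoothness of $Q$, i.e.\ $\P(S)$ meeting $\Delta_X$ transversally, should rule out the remaining degenerate case.
\item For uniqueness, I will show that $\wedge^2R_S^\vee$ contains no other $\mathrm{GL}$-natural line. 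Concretely, I will realize $\omega_Q$ geometrically and show it is the only form whose Lagrangian Grassmannian carries the family below.
\end{enumerate}

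\emph{Identifying $Y_Q$.} I expect this to be the main obstacle. For a two-plane $\Lambda\subset R_S$, the map $\Lambda\otimes E\to M_S$ should have image whose saturation gives a curve on $X$. By Proposition \ref{prop-ubconic}, the bundle $R_S\otimes E$ is an extension of $M_S$ by $\OO_X(-1)$, twisted by $\omega_Q$. Pulling back along $\sigma\colon\widetilde X\to X$, the linear system $|\sigma^*H-2E_Q|$ should be identified with $\P(\wedge^2R_S^\vee/\C\omega_Q)^\vee\cong\P^4$. The plan is as follows.
\begin{enumerate}[(a)]
\item Compute $H^0(\widetilde X,\sigma^*H-2E_Q)=H^0(X,I_Q^2(1))$ and identify it with $\wedge^2R_S/\langle\omega_Q^{-1}\rangle$, the $\omega$-traceless part. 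This uses \eqref{star1} and \eqref{star3} and the fact that points of $X$ give rank-two quotients of $R_S^\vee$. The quotients come from the fibre maps $R_S\to \Hom(E_x,M_{S,x})$, whose kernel at a general $x$ is two-dimensional.
\item Show that this rank-two kernel is $\omega_Q$-isotropic. This gives $\widetilde X\to \mathrm{LG}(2,R_S,\omega_Q)$, which by degree equals the Sarkisov morphism. The Lagrangian Grassmannian $\mathrm{LG}(2,4)$ is the smooth quadric threefold in $\P^4$, so its image is $Y_Q$.
\end{enumerate}
The isotropy check is the delicate computation, and I would first try to deduce it from the snake-lemma diagram in the proof of Proposition \ref{prop-ubconic}.

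\emph{The fibre.} Since $\rho_5^{-1}([Q])=\Gr(3,R_S)$ by construction of $G_5$, I send $W\mapsto W^{\perp_{\omega_Q}}$. This is a one-dimensional subspace, giving $\Gr(3,R_S)\cong\Gr(1,R_S)=\P(R_S)$. Lines in $\mathrm{LG}(2,4)$ are exactly the pencils of Lagrangian planes containing a fixed line $\ell\subset R_S$. Hence $\bH_1(Y_Q)\cong\P(R_S)$, and I check compatibility with the geometric statement that $C_{S,W}$ maps onto a line. A Lagrangian plane $\Lambda$, corresponding to a point $x$, lies on that line iff $\ell\subset\Lambda$. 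I will check this is equivalent to $W\cap\Lambda\neq0$, i.e.\ to $x\in C_{S,W}$, via the fibre description in (a).
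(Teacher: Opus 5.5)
\textbf{There is a genuine gap at the first step: the form you define is identically zero on $R_S$.} For $r=(a_1,a_2)$ and $r'=(a_1',a_2')$ in $R_S$ you have $q_{s_1}(a_1,-)=-q_{s_2}(a_2,-)$ and $q_{s_1}(a_1',-)=-q_{s_2}(a_2',-)$. Hence
\[
\omega(r,r')=q_{s_1}(a_1,a_2')-q_{s_1}(a_1',a_2)=-q_{s_2}(a_2,a_2')+q_{s_2}(a_2',a_2)=0
\]
by the symmetry of $q_{s_2}$. The same happens for any quadric of the pencil $\langle q_{s_1},q_{s_2}\rangle$.

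This cannot be repaired inside the pencil. For general $S$ you may normalize $q_{s_1}=\mathrm{id}$ and $q_{s_2}=\mathrm{diag}(\lambda_1,\dots,\lambda_4)$. The sign changes $\mathrm{diag}(\pm1,\dots,\pm1)$ then preserve $\mu_S$ and act diagonally on $R_S\cong A$ via $r\mapsto a_2$. No nondegenerate alternating form on $A$ is semi-invariant under this group. So a nondegenerate $\omega_Q$ must use information beyond $\mu_S$. Your nondegeneracy argument, your uniqueness argument (``no other natural line''), and the isotropy check you flag as delicate all rest on an object that does not exist as defined.

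There is also a smaller error in how you attach planes to points. For $x\notin Q$ the map $R_S\to\Hom(E_x,M_{S,x})$ is injective, not of two-dimensional kernel. By the four-term sequence in the proof of Proposition \ref{prop-ubconic}, the kernel of $R_S\otimes E_x\to M_{S,x}$ is one-dimensional, so it cannot contain $r\otimes E_x$ for $r\neq0$. The plane attached to $x$ is instead the one dual to the quotient $R_S^\vee\to E_x^\vee$.

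\textbf{The missing idea, which is how the paper proceeds, is to define $\omega_Q$ by the geometry rather than by a formula.} Applying $\Hom(E,-)$ to \eqref{star1} gives $R_S^\vee\cong\Hom(E,I_Q)$. Since $\det E\cong\OO_X(-H)$, the determinant of a pair of morphisms $E\to I_Q$ defines a linear map $\lambda_Q\colon\wedge^2R_S^\vee\to H^0(X,I_Q^2(H))$.

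\begin{itemize}
\item Because $\dim\wedge^2R_S^\vee=6>5=h^0(I_Q^2(H))$, the kernel of $\lambda_Q$ is nonzero.
\item If the kernel had dimension at least two, the line $\P(\ker\lambda_Q)$ would meet the Pl\"ucker quadric. That would give two independent maps $E\to I_Q$ with identically vanishing determinant, which stability rules out.
\item Hence $\ker\lambda_Q=\C\omega_Q$ with $\omega_Q$ indecomposable, which means nondegenerate since $\dim R_S=4$. Existence, nondegeneracy and uniqueness come at once.
\end{itemize}

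Dually, $H^0(I_Q^2(H))^\vee=\omega_Q^\perp\subset\wedge^2R_S$. The plane attached to $x$ is automatically $\omega_Q$-isotropic: pairing $\omega_Q$ with that plane is the value at $x$ of the section $\lambda_Q(\omega_Q)$, which is zero. Therefore $Y_Q=\Gr(2,R_S)\cap\P(\omega_Q^\perp)=\mathrm{LG}(2,R_S,\omega_Q)$, with no separate isotropy computation.

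Your treatment of the fibre is fine once $\omega_Q$ is defined this way: $W\mapsto W^{\perp_{\omega_Q}}$, with lines of $\mathrm{LG}(2,4)$ being the pencils of Lagrangian planes through a fixed line.
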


\begin{proof}
Applying \(\Hom(E,-)\) to \eqref{star1}, we have $R_S^\vee\cong \operatorname{Hom}(E,I_Q)$.
Since \(\det E\cong \cO_X(-H)\), taking the determinant of a pair of morphisms $E\to I_Q$ defines a linear map
\begin{equation}\label{eq:451}
\lambda_Q\colon \wedge^2R_S^\vee \longrightarrow H^0(X,I_Q^2(H)).
\end{equation}
The linear system $|I_Q^2(H)|$ is the one defining the Sarkisov link to $Y_Q\subset \P^4$, and $h^0(X,I_Q^2(H))=5$. Since $\dim \wedge^2 R_S^\vee=6$, the kernel of $\lambda_Q$ is nonzero, and it is one-dimensional. Indeed, if $\dim\ker(\lambda_Q)\geq 2$, then the line \(\PP(\ker\lambda_Q)\subset \P(\wedge^2 R_S^\vee)\) meets the Pl\"{u}cker quadric $\Gr(2,R_S^\vee)$. A decomposable element in the kernel would give two linearly independent morphisms $E\to I_Q$ whose determinant vanishes identically, contrary to the stability of $E^\vee$. Therefore, $\ker(\lambda_Q)=\CC\cdot\omega_Q$.
The form $\omega_Q$ is non-decomposable and hence nondegenerate since $\dim R_S=4$. By dualizing \eqref{eq:451}, we obtain
\[
H^0(X,I_Q^2(H))^\vee=
\ker (
\wedge^2R_S\xrightarrow{\;\omega_Q\;}\CC
).
\]
Consequently,
\[
\mathrm{LG}(2,R_S,\omega_Q)=\Gr(2,R_S)
\cap
\PP\bigl(H^0(X,I_Q^2(H))^\vee\bigr)
\subset\PP^4
\]
is the smooth quadric threefold. By construction, this quadric is precisely \(Y_Q\), proving the claim. Since $\bH_1(Y_Q)\cong \PP(R_S)$, the latter claim clearly holds.
\end{proof}

\begin{remark}
Let $L\subset Y_Q$ be a general line. Its proper transform under the inverse Sarkisov link is a smooth rational quintic on $X$. The resolution obtained from the link and the Serre construction has the form \eqref{eq-deg5-eval}. Hence, this quintic determines a point of $G_5$.
\end{remark}

\section{The degree-six case}\label{sec-degree-six}
\subsection{Stable Kronecker representations and kernel bundles}

Let \(V_3\cong\C^3\) and \(V_2\cong\C^2\). A three-arrow Kronecker representation of dimension vector \((3,2)\) is a linear map
\[
  \beta\colon V_3\longrightarrow V_2\otimes B^\vee.
\]
We use the convention that \(\beta\) is stable if every nonzero proper
subrepresentation \((V_3',V_2')\) satisfies \(2\dim V_3'-3\dim V_2'<0\). This is King stability with the chosen sign convention; see \cite{Kin94}.

The same tensor defines the dual representation
\[
  \gamma\colon V_2^\vee\longrightarrow V_3^\vee\otimes B^\vee.
\]

The next proposition shows that the induced morphism $\beta_X$ is surjective.

\begin{proposition}\label{thm;degree-six-surj}
Let \(\beta\) be a stable three-arrow Kronecker representation of dimension
vector \((3,2)\). Then the induced morphism
\[
  \beta_X\colon V_3\otimes K\longrightarrow V_2\otimes U
\]
is surjective.
\end{proposition}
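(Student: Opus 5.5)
We prove surjectivity by showing that the cokernel of \(\beta_X\) is zero, and we test this against quotient maps out of \(V_2\otimes U\). Since \(U\) is globally a bundle, it suffices to show that for every point \(x\in X\) the fiber map \(V_3\otimes K_x\to V_2\otimes U_x\) is onto. Equivalently, for every \(x\) and every nonzero functional \(\xi\in (V_2\otimes U_x)^\vee\), the composite \(V_3\otimes K_x\to\C\) is nonzero.

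\textbf{Step 1: reduce to two sub-cases.} Write \(\xi=\sum_j \eta_j\otimes u_j^\ast\) with \(\eta_j\in V_2^\vee\) and \(u_j^\ast\in U_x^\vee\). Its \(V_2^\vee\)-rank is \(1\) or \(2\). In the first case there is a line \(\langle\eta\rangle\subset V_2^\vee\) such that \(\xi\) factors through \(\eta\otimes\mathrm{id}\colon V_2\otimes U\to U\).

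\textbf{Step 2: a one-dimensional quotient \(V_2\to\C\) via \(\eta\).} Consider the composite \(V_3\otimes K\to U\) given by \(\eta\circ\beta\). This is the evaluation map for the subspace \(S_\eta\coloneqq \gamma(\eta)(V_3)\subset B^\vee\), where \(\gamma\) is the dual representation. If \(\dim S_\eta\ge 2\), then Proposition \ref{prop-surj} shows that \(S_\eta\otimes K\to U\) is surjective, so this composite is already onto. This handles every \(\xi\) of \(V_2^\vee\)-rank one.

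It remains to show \(\dim S_\eta\ge2\) for all \(\eta\). Suppose instead \(\dim S_\eta\le 1\). Then the kernel \(V_3'\) of \(V_3\to B^\vee\), \(v\mapsto \eta(\beta(v))\), has dimension at least \(2\), and \(\beta(V_3')\subset \ker(\eta)\otimes B^\vee\) with \(\dim\ker\eta=1\). This gives a subrepresentation \((V_3',\ker\eta)\) with \(2\cdot 2-3\cdot1=1>0\), contradicting stability.

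\textbf{Step 3: the rank-two case, the main obstacle.} Here \(\xi\) does not factor through a single \(\eta\), and fiberwise surjectivity onto \(U_x\) for each \(\eta\) separately does not obviously give surjectivity onto \(V_2\otimes U_x\). We argue via cokernels. Let \(T\coloneqq\operatorname{coker}\beta_X\), and suppose it is nonzero at \(x\). Choose \(\eta\in V_2^\vee\). The surjection \(\eta\colon V_2\otimes U\to U\) maps \(\operatorname{im}\beta_X\) onto \(U\) by Step 2. Hence \(\operatorname{im}\beta_X+\ker(\eta)\otimes U=V_2\otimes U\), so \(\ker(\eta)\otimes U_x\to T_x\) is surjective for every \(\eta\).

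Take two independent \(\eta_1,\eta_2\), so that \(V_2=\ker\eta_1\oplus\ker\eta_2\). The fiber \(T_x\) is then a quotient of each of \(\ker\eta_i\otimes U_x\cong U_x\), compatibly. Pick \(\xi\) vanishing on \(\operatorname{im}\beta_X\). Its restriction to each \(\ker\eta_i\otimes U_x\) is nonzero, by the surjectivity just shown together with \(T_x\ne0\). So \(\xi\) has rank two, \(\xi=e_1^\ast\otimes u_1^\ast+e_2^\ast\otimes u_2^\ast\). Letting \(\eta\) vary over \(\P(V_2^\vee)\), the restriction of \(\xi\) to \(\ker\eta\otimes U_x\) gives a linear family of functionals on \(U_x\), each of which must kill the image of \(\ker(\eta)\otimes(\text{part of }K_x)\).

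We will translate this into the statement that \(\xi\) defines, through \(\gamma\), a map \(V_3\to B^\vee\otimes \C^2\) landing in the set of \(b\) whose fiber map \(b_x\colon K_x\to U_x\) misses a fixed functional. Using \(\operatorname{rk}(b_x)\) and Proposition \ref{linesv22}, this forces the relevant \(b\)'s into \(\Delta_X\) along a pencil, which contradicts the fact that \(\Delta_X\) contains no line. This is the step where we expect to need Lemma \ref{lem;degree-six-rank}(2), applied to a two-plane \(W\subset A\) determined by \(u_1^\ast,u_2^\ast\), to bound the dimension of the annihilated locus. Once it is excluded, \(T=0\) and \(\beta_X\) is surjective.
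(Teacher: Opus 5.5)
Your Steps 1 and 2 are fine. Step 2 is essentially the paper's rank-one case: it rests on $\Delta_X$ containing no line, via Proposition \ref{prop-surj}. The gap is Step 3, which is the real content of the proposition. The mechanism you sketch there cannot work.

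\textbf{Why the sketched mechanism fails.} Write $\xi=c_1\otimes u_1^\ast+c_2\otimes u_2^\ast$ with $c_1,c_2$ a basis of $V_2^\vee$, set $T=\langle u_1^\ast,u_2^\ast\rangle$, and write $\beta(v)=e_1\otimes b_1(v)+e_2\otimes b_2(v)$ in the dual basis. The condition $\xi\circ\beta_{X,x}=0$ says exactly that $b_1(v)_x^\vee(u_1^\ast)+b_2(v)_x^\vee(u_2^\ast)=0$ in $K_x^\vee$ for all $v\in V_3$. Equivalently, $r(v)\coloneqq b_1(v)\otimes u_1^\ast+b_2(v)\otimes u_2^\ast$ lies in $R_x(T)\coloneqq\mathcal{Q}_x\cap(B^\vee\otimes T)$. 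Here $\mathcal{Q}_x\coloneqq\ker(B^\vee\otimes U_x^\vee\to K_x^\vee)$ is four-dimensional; it is the dual of the fiber of $B\otimes U\to Q^\vee$.

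This is a coupled condition: no individual $b$ is required to kill a fixed functional. In fact $R_x(T)\neq 0$ for every $x$ and every $T$, because it is the kernel of a map from the four-dimensional $\mathcal{Q}_x$ to $B^\vee$. This holds even at points $x$ lying on no line, where every $b_x$ with $b\neq0$ is surjective by Proposition \ref{linesv22}. So no argument through $\Delta_X$ can produce the contradiction. Nor can Lemma \ref{lem;degree-six-rank}, which concerns the net of quadrics on $A$ and says nothing about fibers at $x$. Your cokernel manipulation (that $\ker(\eta)\otimes U_x\to T_x$ is onto) is correct, but it does not advance the argument.

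\textbf{The missing ingredient.} What you need is the finiteness of conics through a point, Proposition \ref{conicsv22}(2).
\begin{enumerate}
\item Choose $0\neq u\in U_x$ with $T=u^\perp$. Then $R_x(T)$ is the kernel of the contraction $\mathcal{Q}_x\to B^\vee$ with $u$.
\item If $\dim R_x(T)\ge3$, the dual map $B\to Q^\vee_x$, $b\mapsto[b\otimes u]$, would have a kernel of dimension at least $2$. That is a pencil of conics $Q_b$ through $x$, which is impossible. Hence $\dim R_x(T)\le 2$.
\item The map $r\colon V_3\to R_x(T)$ is injective. If $r(v)=0$, then $b_1(v)=b_2(v)=0$, so $\beta(v)=0$. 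This gives a subrepresentation of dimension vector $(1,0)$ with weight $2>0$, contradicting stability.
\item A three-dimensional $V_3$ cannot inject into a space of dimension at most $2$, which settles the rank-two case.
\end{enumerate}
This is precisely how the paper's proof concludes.
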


\begin{proof}
  The annihilator construction identifies subrepresentations
$(C,D)$ of the dual representation with subrepresentations
$(D^\perp,C^\perp)$ of $\beta$, and
\[
2\dim D^\perp-3\dim C^\perp
=
3\dim C-2\dim D.
\]
It is enough to prove that the dual morphism
\[
f\colon V_2^\vee\otimes U^\vee\longrightarrow V_3^\vee\otimes K^\vee
\]
is an inclusion of vector bundles. Let \(I\coloneqq\im(f)\), \(s\coloneqq\rk(I)\) and \(c_1(I)=mH\). The case $s=0$ contradicts the stability of $\gamma$. Since $I$ is a torsion-free quotient of $V_2^\vee\otimes U^\vee$ and a subsheaf of $V_3^\vee\otimes K^\vee$, we have
\[
\frac{1}{3}\leq \frac{m}{s}\leq \frac{2}{5}.
\]
For $1\leq s\leq 6$, the only possibilities are
\[
(s,m)=(3,1),\quad (5,2),\quad (6,2).
\]

Assume first that \((s,m)=(3,1)\). Then the kernel \(F\coloneqq\ker(f)\) is a saturated rank-three subsheaf of $V_2^\vee\otimes U^\vee$ of slope $\frac{1}{3}$. By Lemma \ref{lem;degree-six-polystable}, there is a line \(C\subset V_2^\vee\) such that \(F=C\otimes U^\vee\). Thus, \(f\) vanishes on \(C\otimes U^\vee\), and consequently \(\gamma(C)=0\). The pair \((C,0)\) is a subrepresentation of dimension vector \((1,0)\), whose weight is \(3\cdot1-2\cdot0=3>0\), which is a contradiction.

Assume next that \((s,m)=(5,2)\). Let \(\overline I\subset V_3^\vee\otimes K^\vee\) be the saturation of \(I\). Write \(c_1(\overline I)=(2+d)H\) for some \(d\ge0\). Since \(\overline I\) is a rank-five subsheaf of the semistable bundle \(V_3^\vee\otimes K^\vee\), \(\frac{2+d}{5}\le\frac25\) and hence, \(d=0\). By Lemma \ref{lem;degree-six-polystable}, there is a line \(D\subset V_3^\vee\) such that \(\overline I=D\otimes K^\vee\). Since \(I\subset\overline I\), the morphism \(f\) factors through \(D\otimes K^\vee\). Therefore, \(\gamma(V_2^\vee)\subset D\otimes B^\vee\). The pair \((V_2^\vee,D)\) is a subrepresentation of dimension vector \((2,1)\), whose weight is \(3\cdot2-2\cdot1=4>0\). This is again a contradiction.

Consequently, \((s,m)=(6,2)\) and \(f\) is injective as a morphism of sheaves.

Set \(C_f\coloneqq \mathrm{coker}(f)\). We prove that \(C_f\) is locally free.
Suppose, on the contrary, that it is not locally free. At some $x\in X$, the fiber map has a nonzero tensor $\xi\in V_2^\vee\otimes U_x^\vee$ in its kernel. Assume first that \(\xi\) has rank \(1\), i.e., \(\xi=c\otimes u^\vee\). Define
\[
  R_{x,u^\vee}\coloneqq\ker\left(B^\vee\longrightarrow K_x^\vee, \quad b\longmapsto b_x^\vee(u^\vee)\right).
\]
Every nonzero element of this space represents a map $K\to U$ dropping rank at $x$, hence a point of the smooth plane quartic $\Delta_X$. Since $\Delta_X$ contains no line, $\dim R_{x,u^\vee}\leq 1$. The equality \(f_x(\xi)=0\) then gives a subrepresentation of $\gamma$ of dimension $(1,0)$ or $(1,1)$, each of which gives a contradiction.

Assume now that \(\xi\) has tensor rank \(2\) and write
\[
\xi=c_1\otimes u_1^\vee+c_2\otimes u_2^\vee,\quad T=\langle u_1^\vee,u_2^\vee\rangle.
\]
Choose $0\neq u\in U_x$ such that $T=u^{\perp}$, and choose a basis \(d_1,d_2,d_3\) of \(V_3^\vee\). Write
\[
  \gamma(c_i)=\sum_{j=1}^3d_j\otimes b_{ji}.
\]
The equality \(f_x(\xi)=0\) implies that \(r_j\coloneqq b_{j1}\otimes u_1^\vee+b_{j2}\otimes u_2^\vee\) belongs to
\[
  R_x(T)\coloneqq \mc Q_x\cap(B^\vee\otimes T)
\]
for \(j=1,2,3\).

If \(\dim R_x(T)\geq 3\), then the dual map
\[
  B\longrightarrow\mc Q_x^\vee,
  \qquad
  b\longmapsto b_x(u),
\]
has a kernel of dimension at least \(2\), producing a projective line of conics through $x$, contrary to \cite[Lemma 3.2]{AF06}. Hence, \(\dim R_x(T)\le2\). The equation $f_x(\xi)=0$ says that $r_j=b_{j1}\otimes u_1^\vee+b_{j2}\otimes u_2^\vee$ belongs to $R_x(T)$. Since this space has dimension at most two, the three $r_j$ are linearly dependent. Independence of $u_1^\vee,u_2^\vee$ then gives a two-plane $D\subset V_3^\vee$ satisfying $\gamma(V_2^\vee)\subset D\otimes B^\vee$. This gives a subrepresentation of dimension $(2,2)$ which contradicts the stability of $\beta$. Thus, the cokernel $C_f$ of $f$ is locally free, and dualizing gives the proposition.
\end{proof}

We define the kernel bundle $M_{\beta}$ associated with a stable Kronecker representation.
\begin{definition}\label{def;M-beta}
For a stable Kronecker representation \(\beta\), let $\beta_X\colon V_3\otimes K\to V_2\otimes U$ be the induced morphism and set
\[
  M_\beta\coloneqq\ker\left(V_3\otimes K\xrightarrow{\beta_X}V_2\otimes U
  \right).
\]
\end{definition}
Then 
\begin{equation}\label{eq-deg6}
  0\longrightarrow M_{\beta}\longrightarrow V_3\otimes K\xrightarrow{\beta_X} V_2\otimes U\longrightarrow 0
\end{equation}
is exact, and $\rk(M_{\beta})=9$, $c_1(M_{\beta})=-4H$, $\ch(M_{\beta})=9-4H+10\ell+\frac{4}{3}p$. The object $M_{\beta}[1]$ belongs to the triangulated subcategory $\langle K,U\rangle$. Semiorthogonality gives $\RHom(M_{\beta},E)=0$, and in particular, $\Ext^1(M_{\beta},E)=0$.

\begin{lemma}\label{lem-rank-seven}
  Assume that $F\subset M_{\beta}$ is a saturated maximal destabilizing subsheaf with $
  \rk(F)=7$ and $c_1(F)=-3H$. Then there exist
  \begin{enumerate}[(1)]
    \item a slope-stable acyclic rank-two vector bundle $R$ with $c_1(R)=-H$, $c_2(R)=d'\ell$ and $8\leq d'\leq 15$,
    \item a nonzero morphism $M_{\beta}\to R$, and
    \item vector spaces $I,C$ with $\dim I=d'-7$ and $\dim C=d'-8$, and a monad
      \begin{equation}\label{eq-monad}
        C^\vee \otimes E\longrightarrow I^\vee\otimes K\xrightarrow{\gamma_X} I\otimes U,
      \end{equation}
          whose cohomology is $R$.
  \end{enumerate}
  Let $N_{\gamma}=\ker(\gamma_X)$. Then we have an exact sequence
  \begin{equation}\label{eq-N-gamma}
    0\longrightarrow C^\vee \otimes E\longrightarrow N_{\gamma}\longrightarrow R\longrightarrow 0.
  \end{equation}
\end{lemma}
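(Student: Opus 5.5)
Let $R\coloneqq M_\beta/F$ be the quotient. Since $F$ is saturated, $R$ is torsion-free of rank two with $c_1(R)=-4H-(-3H)=-H$. The plan has four steps: show $R$ is stable and reflexive, show it is in fact a locally free acyclic bundle, bound $d'$, and build the monad from Faenzi's Beilinson spectral sequence.

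\emph{Stability and reflexivity.} Because $F$ is maximal destabilizing, the Harder--Narasimhan property makes every quotient of $R$ have slope strictly larger than $\mu(F)=-3/7$. A rank-one torsion-free quotient of $R$ has $c_1=kH$ with $k/1>-3/7$, so $k\geq 0$. Then the kernel line subsheaf has $c_1\leq -H$, i.e. slope $\leq -1<-1/2$, so $R$ is slope-stable. The reflexive hull $R^{\vee\vee}$ is again stable with $c_1=-H$. To see $R=R^{\vee\vee}$, I would argue that $M_\beta$ is locally free with $\Ext^{1}$-vanishings coming from \eqref{eq-deg6} and use the sequence $0\to F\to M_\beta\to R\to 0$. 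I am less sure about this step, and may need to pass to $R^{\vee\vee}$ and track the discrepancy.

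\emph{Acyclicity and local freeness.} Apply $R\Gamma$ to $0\to F\to M_\beta\to R\to 0$. From \eqref{eq-deg6}, $M_\beta$ is acyclic because $K$ and $U$ are acyclic (they are orthogonal to $\OO_X$ in the collection). Stability gives $H^0(R)=0$ since $c_1<0$. Serre duality together with stability gives $H^3(R)=\Hom(R,\OO_X(-1))^\vee=0$, since $R^\vee(-1)\cong R$ for rank-two reflexive sheaves with $c_1=-H$ and the slopes force vanishing. I would then get $H^1(R)=H^2(R)=0$ from the long exact sequence, using vanishings for $F$; these come from $F\subset M_\beta\subset V_3\otimes K$ and stability of $F$ (its slope and the slopes in the collection), in the same way as in Lemma \ref{MSst}. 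Consequently $\chi(R)=0$, so $c_3(R)=0$ by \eqref{eq-rank-two}, and $R$ is locally free by Proposition \ref{prop-rank-two}.

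\emph{Bounding $d'$.} Write $c_2(R)=d'\ell$. The lower bound should come from Bogomolov-type and Riemann--Roch positivity for stable rank-two bundles on $V_{22}$ with $c_1=-H$; for example, $d'\geq 8$ follows once $\dim I=d'-7$ and $\dim C=d'-8$ are forced to be nonnegative. The upper bound $15$ comes from $c_2$ of the quotient $M_\beta\to R$: comparing $\ch_2(F)=\ch_2(M_\beta)-\ch_2(R)$ with the Bogomolov inequality for the semistable $F$, or with a restriction to a general curve, gives a finite bound. I expect this to be the main obstacle, since it requires careful numerics with $\ch(M_\beta)=9-4H+10\ell+\frac43p$.

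\emph{The monad and \eqref{eq-N-gamma}.} Feed $R$ into the Beilinson spectral sequence of \cite[Corollary 7.3]{Fae07}, exactly as in Proposition \ref{exi4}. Acyclicity and stability kill most terms, leaving a monad $C^\vee\otimes E\to I^\vee\otimes K\to I\otimes U$. The dimensions are computed by Riemann--Roch from $\chi(E,R)$, $\chi(K,R)$ and $\chi(U,R)$ with the given Chern characters, which yields $\dim I=d'-7$ and $\dim C=d'-8$. The self-duality $R^\vee(-1)\cong R$ explains why the spaces paired with $K$ and $U$ are mutually dual. Surjectivity of $\gamma_X$ is part of the monad structure, and taking its kernel gives \eqref{eq-N-gamma} directly. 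Finally, the quotient map $M_\beta\to R$ is the required nonzero morphism.
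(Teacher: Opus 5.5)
\textbf{First gap: acyclicity and local freeness.} Write $G=M_\beta/F$. Since $M_\beta$ is acyclic, $H^i(G)\cong H^{i+1}(F)$. So your claimed $H^1(G)=0$ is equivalent to $H^2(F)=\Ext^1(F,\OO_X(-1))^\vee=0$. Nothing about the stability of $F$ or the slopes of the collection gives this, and Lemma~\ref{MSst} contains no such vanishing. What you do get is $H^2(G)\cong H^3(F)=0$, by semistability of $F$.

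Also, do not try to prove that $G$ is reflexive; there is no reason for it. Put $R=G^{\vee\vee}$ and $T=R/G$, which has support of dimension $\le 1$. Then $H^2(R)=0$ because $H^2(T)=0$, and $H^0(R)=H^3(R)=0$ by stability. Hence $\chi(R)=-h^1(R)\le 0$. On the other hand, \eqref{eq-rank-two} and Proposition~\ref{prop-rank-two} give $\chi(R)=\tfrac12 c_3(R)\ge 0$. This squeeze is the missing idea: it yields $c_3(R)=0$ and $h^1(R)=0$ at the same time, so $R$ is locally free and acyclic. Your order of argument (first $\chi(R)=0$, then $c_3=0$) cannot be carried out.

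The upper bound is then routine rather than the obstacle you expect. With $c_2(G)=d\ell$ one has $\ch_2(F)=(d-1)\ell$, and Bogomolov for $F$ reads $212-14d\ge 0$. Moreover $d'\le d$ because $\ch_2(T)$ is effective. The morphism in (2) is the composite $M_\beta\to G\hookrightarrow R$, not the quotient map itself. A minor point: the HN property bounds the slopes of subsheaves of $M_\beta/F$ (all $<-3/7$), not of its quotients. Indeed $R$ is itself a quotient of slope $-1/2$.

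\textbf{Second gap: the lower bound $d'\ge 8$.} No argument using only the stability, acyclicity and numerics of $R$ can give it. The bundle $E$ is itself a stable acyclic rank-two bundle with $c_1=-H$ and $c_2=7\ell$, and Bogomolov only yields $d'\ge 6$. Deducing the bound from $\dim C=d'-8\ge 0$ is circular. Riemann--Roch gives $\chi(E,R)=8-d'$, and reading off $\dim C=d'-8$ from the Beilinson spectral sequence presupposes $\Hom(E,R)=0$. For a stable $R$ of the same rank and slope as $E$, that vanishing is equivalent to $R\not\cong E$.

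The missing input is the specific nature of $M_\beta$. First, $R$ is an odd instanton, and odd instantons have charge $\ge 7$, with $E$ the unique one of charge $7$ \cite{Fae14}. Second, $R\not\cong E$, because $M_\beta\to R$ is nonzero while $\RHom(M_\beta,E)=0$ (as $M_\beta[1]\in\langle K,U\rangle$). Only after this can the monad be quoted from Faenzi, or derived via Beilinson as you propose, and then \eqref{eq-N-gamma} follows as you say.
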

\begin{proof}
  The quotient $G\coloneqq M_{\beta}/F$ is torsion-free of rank two and $c_1(G)=-H$, and is slope-stable by the maximality of $F$. 

  Let $c_2(G)=d\ell$. Since $\ch_2(M_{\beta})=10\ell$ and $\ch_2(G)=(11-d)\ell$, we obtain $\ch_2(F)=(d-1)\ell$. By the Bogomolov inequality for the semistable sheaf $F$, we have
  \[
  0\leq H(c_1(F)^2-2\rk(F)\ch_2(F))=212-14d,
  \]
  thus, $d\leq 15$.

  The exact sequence \eqref{eq-deg6} and the acyclicity of $K$ and $U$ imply that $M_{\beta}$ is also acyclic. Semistability and Serre duality give 
  \[
  H^0(F)=H^3(F)=H^0(G)=H^3(G)=0.
  \]
  The sequence $\ses{F}{M_{\beta}}{G}$ gives $H^1(F)=H^2(G)=0$.

  Let $R=G^{\vee\vee}$ and $T=R/G$. Then $R$ is stable and reflexive, and $T$ is supported in dimension at most one. Stability and the exact sequence $\ses{G}{R}{T}$ give $H^0(R)=H^2(R)=H^3(R)=0$. Hence, $\chi(R)=-h^1(R)\leq 0$. By \eqref{eq-rank-two} and Proposition \ref{prop-rank-two}, 
  \[
  0\leq \frac{1}{2}c_3(R)=\chi(R)\leq 0.
  \]
  Hence, $c_3(R)=0$ and $H^{\bullet}(X,R)=0$, and Proposition \ref{prop-rank-two} shows that $R$ is locally free.

  Write $c_2(R)=d'\ell$. Since the one-dimensional part of $T$ is effective, $d'\leq d\leq 15$. The bundle $R$ is an odd instanton in the sense of \cite{Fae14}. The minimal second Chern class is seven, and the unique odd instanton with $c_2=7\ell$ is $E$ by \cite[Subsection 4.2]{Fae14}. The composite $M_{\beta}\to G\hookrightarrow R$ is nonzero, whereas $\RHom(M_{\beta},E)=0$. Thus $R\not\cong E$, and $d'\geq 8$.
  
  By \cite[Proposition 4.3 and Lemma 4.5]{Fae14}, the bundle $R$ is the cohomology of a monad
\[
C^\vee\otimes E\longrightarrow I^\vee\otimes K\xrightarrow{\gamma_X}I\otimes U,
\]
where $\dim I=d'-7$ and $\dim C=d'-8$.
\end{proof}

For a nonzero Kronecker representation of dimension vector $(a,b)$ with $b>0$, set
\[
\mu_K(a,b)\coloneqq \frac{a}{b}.
\]
A Kronecker representation is semistable if every nonzero proper subrepresentation has slope at most the slope of the representation.

We first prove that the Kronecker representation in the instanton monad is semistable.

\begin{lemma}\label{lem-Hom-vanish}
The map $\gamma_X$ in \eqref{eq-monad} is induced by a semistable Kronecker representation 
  \[
  \gamma\colon I^\vee\to I\otimes B^\vee
  \]
of slope one.
\end{lemma}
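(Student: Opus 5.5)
The plan is to argue by contradiction, in the same way as the proof of Proposition \ref{thm;degree-six-surj}. First, the statement that $\gamma_X$ is induced by a Kronecker representation is formal. Since $\Hom(K,U)=B^\vee$, any morphism $I^\vee\otimes K\to I\otimes U$ is the same as a linear map $\gamma\colon I^\vee\to I\otimes B^\vee$. Its dimension vector is $(d'-7,d'-7)$, so its slope is $\mu_K=1$. What remains is semistability. So suppose there is a subrepresentation $(V',V'')$, with $V'\subset I^\vee$ and $V''\subset I$, such that $\gamma(V')\subset V''\otimes B^\vee$ and $\dim V'>\dim V''$. We want to derive a contradiction from the monad structure.

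The key steps, in order, are as follows.

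First, take such a destabilizing subrepresentation with $a=\dim V'>b=\dim V''\geq 0$. Restricting $\gamma_X$ gives a morphism $V'\otimes K\to V''\otimes U$, and it sits in a commutative square with $\gamma_X$. Rank count: the first term has rank $5a$, the second $3b$. Since $a>b$, we have $5a-3b\geq 2a+3$ (because $b\le a-1$), so the kernel $N'\coloneqq\ker(V'\otimes K\to V''\otimes U)$ is a nonzero subsheaf of $N_\gamma$ of rank at least $5a-3b$ and $c_1(N')\geq(-2a+b)H$.

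Second, compare $N'$ with the filtration \eqref{eq-N-gamma}. Consider the composite $N'\to N_\gamma\to R$ and the image of $N'$ in the rank-two stable bundle $R$ with $\mu(R)=-\tfrac12$.
\begin{itemize}
\item If the composite is zero, then $N'\subset C^\vee\otimes E$. Since $\RHom(K,E)=0$ and $E$ is stable of slope $-\tfrac12$, slope considerations together with Lemma \ref{lem;degree-six-polystable} force $N'$ to be of the form $T'\otimes E$.
\item This cannot happen. A copy of $E$ inside $V'\otimes K$ is the image of an element of $V'\otimes A$, and its composite to $V''\otimes U$ is zero. Because $I^\vee\otimes K$ has no subsheaf isomorphic to $C^\vee\otimes E$ beyond the monad map, this contradicts the minimality of the monad. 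In \cite{Fae14} the first map is the canonical one, and $\dim C=d'-8$ is forced by $\Ext$-computations.
\end{itemize}

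Third, if the composite is nonzero, compare slopes. The image in $R$ has rank one or two, and $\mu$ of it exceeds $-\tfrac12$ once $b<a$. For this, compute $c_1(N')/\rk N'\geq(-2a+b)/(5a-3b)$ and use that the kernel of $N'\to R$ lies in $C^\vee\otimes E$, so it has slope at most $-\tfrac12$. The resulting quotient then has slope strictly greater than $-\tfrac12$, contradicting the stability of $R$.

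The main obstacle is making the slope bookkeeping in the third step sharp. The inequality $(-2a+b)/(5a-3b)>-\tfrac12$ reduces to $a>b$, which is exactly our hypothesis. But we need $c_1$ of the kernel of $V'\otimes K\to V''\otimes U$ computed exactly, not just bounded, so we also need that map to be generically surjective. To get that, I would first handle the degenerate case separately: if $V'\otimes K\to V''\otimes U$ is not generically surjective, replace $V''$ by a smaller subspace. Proposition \ref{prop-surj} and the absence of lines on $\Delta_X$ (Lemma \ref{lem;degree-six-rank}) guarantee that the image is $V''_0\otimes U$ for a sub-Kronecker module, and the smaller subspace only increases $a-b$. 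With generic surjectivity in place, Bogomolov-free slope arguments suffice.
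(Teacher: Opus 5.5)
Your architecture matches the paper's. For a subrepresentation $(V',V'')$ with $a=\dim V'>b=\dim V''$, you take $N'=\ker(V'\otimes K\to V''\otimes U)\subset N_\gamma$, prove $\mu_H(N')>-\tfrac12$, and play this against $0\to C^\vee\otimes E\to N_\gamma\to R\to 0$. The formal identification of $\gamma_X$ with a slope-one representation is also fine. But two steps fail as written.

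\textbf{The case where $N'\to R$ vanishes.} Your argument here does not work. Having $N'=T'\otimes E\subset C^\vee\otimes E$ inside $\ker(V'\otimes K\to V''\otimes U)$ contradicts nothing about ``minimality of the monad''. The monad's first map puts exactly such copies of $E$ into $\ker\gamma_X$. The contradiction in this case is purely numerical. $C^\vee\otimes E$ is polystable of slope $-\tfrac12$, so $N'\subset C^\vee\otimes E$ forces $\mu_H(N')\le-\tfrac12$. You should therefore prove $\mu_H(N')>-\tfrac12$ \emph{before} splitting into cases, and then this case takes one line. In the other case, if the image $J\subset R$ has rank two, the contradiction comes from $c_1(J)=c_1(R)-D$ with $D$ effective, not from stability.

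\textbf{The slope estimate.} The ``main obstacle'' you identify is not real, and your proposed cure relies on a false claim. You need neither generic surjectivity nor an exact value of $c_1(N')$. Let $I'$ be the image, with $s=\rk I'\le 3b$ and $c_1(I')=mH$. Semistability of $V''\otimes U$ gives $m\le -s/3$. Since $\rk N'=5a-s$ and $c_1(N')=-(2a+m)H$, you get
\[
-2(2a+m)+(5a-s)=a-2m-s\ \ge\ a-\tfrac{s}{3}\ \ge\ a-b>0,
\]
which is $\mu_H(N')>-\tfrac12$. This is precisely the paper's proof.

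Your reduction instead asserts that a non-generically-surjective image is $V''_0\otimes U$ for a smaller $V''_0$, and this fails. Take $b=2$ and $\gamma(V')=\C(e_1\otimes b_1+e_2\otimes b_2)$ with $b_1,b_2$ independent. Then the image is a copy of $K$ (as in \eqref{star1}), of rank $5<6$, and the map factors through no $V''_0\otimes U$ with $V''_0\subsetneq V''$. Likewise, the bound $c_1(N')\ge(-2a+b)H$ stated in your first step fails in general, for example when $b=2$ and $\gamma(V')\subset e_1\otimes B^\vee$.
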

\begin{proof}
  Suppose that $I_1\to I_2\otimes B^\vee$ is a subrepresentation with $a=\dim I_1>b=\dim I_2$. Let $I'$ and $N'$ be the image and kernel of the induced map $I_1\otimes K\longrightarrow I_2\otimes U$, and write $s=\rk(I')$ and $c_1(I')=mH$. If $s=0$, then $\mu_H(N')=-\frac{2}{5}>-\frac{1}{2}$. If $s>0$, then semistability gives
  \[
  -\frac{2}{5}\leq \frac{m}{s}\leq -\frac{1}{3},\quad s\leq 3b.
  \]
  Since $\rk(N')=5a-s$ and $c_1(N')=-(2a+m)H$, we obtain
  \[
  -2(2a+m)+(5a-s)=a-2m-s\geq a-\frac{s}{3}\geq a-b>0,
  \]
  which implies that $\mu_H(N')>-\frac{1}{2}$.

  The subrepresentation condition gives $N'\subset N_{\gamma}$. Let $K'=N'\cap(C^\vee\otimes E)$. Since $C^\vee\otimes E$ is polystable of slope $-\frac{1}{2}$, one has $\mu_H(K')\leq -\frac{1}{2}$. The composite $N'\to R$ cannot vanish. Its image $J\subset R$ therefore satisfies $\mu_H(J)>-\frac{1}{2}$. If $\rk(J)=1$, then this contradicts the stability of $R$. If $\rk(J)=2$, then $c_1(J)=c_1(R)-D$ for an effective divisor $D$; consequently, $\mu_H(J)\leq -\frac{1}{2}$, again a contradiction.
\end{proof}

We now prove that $M_{\beta}$ is slope-stable.

\begin{lemma}\label{lem-M-beta-stable}
If $\beta$ is stable, then $M_{\beta}$ is slope-stable.
\end{lemma}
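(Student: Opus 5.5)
The slope of $M_\beta$ is $\mu_H(M_\beta)=-\tfrac49$. I will argue by contradiction. Suppose $F\subset M_\beta$ is a nonzero proper saturated subsheaf with $\mu_H(F)\ge -\tfrac49$, chosen maximal destabilizing. The quotient $V_2\otimes U$ in \eqref{eq-deg6} is torsion-free, so $M_\beta$ is saturated in $V_3\otimes K$, and hence $F$ is saturated in $V_3\otimes K$. Since $V_3\otimes K$ is semistable of slope $-\tfrac25$, writing $\rk(F)=r$ and $c_1(F)=mH$ gives
\[
-\tfrac49\le \tfrac mr\le -\tfrac25,\qquad 1\le r\le 8 .
\]
A direct check shows that the only integral solutions are $(r,m)=(5,-2)$ and $(r,m)=(7,-3)$. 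I treat these two cases separately.

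\emph{Case $(5,-2)$.} Here $F$ is saturated in $V_3\otimes K$ with the same slope as $K$. By Lemma \ref{lem;degree-six-polystable}, $F=V'\otimes K$ for a line $V'\subset V_3$. Since $F\subset M_\beta=\ker\beta_X$, every element of $V'$ induces the zero map $K\to V_2\otimes U$, so $\beta(V')=0$. Then $(V',0)$ is a subrepresentation of weight $2\cdot1-3\cdot0=2>0$, which contradicts the stability of $\beta$.

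\emph{Case $(7,-3)$.} Lemma \ref{lem-rank-seven} applies and gives a stable acyclic rank-two bundle $R$ with $8\le d'\le15$, a nonzero morphism $M_\beta\to R$, and the sequence \eqref{eq-N-gamma}. Applying $\Hom(M_\beta,-)$ to \eqref{eq-N-gamma} and using $\RHom(M_\beta,E)=0$ yields
\[
\Hom(M_\beta,N_\gamma)\cong\Hom(M_\beta,R)\neq0 .
\]
Both $M_\beta[1]$ and $N_\gamma[1]$ lie in $\langle K,U\rangle$. Since $(K,U)$ is a strong exceptional pair with $\Hom(K,U)=B^\vee$, the subcategory $\langle K,U\rangle$ is equivalent to $\Db$ of representations of the three-arrow Kronecker quiver. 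Under this equivalence, $M_\beta[1]$ corresponds to the representation $\beta$ and $N_\gamma[1]$ to $\gamma$. Both objects are complexes $V\otimes K\to V'\otimes U$ whose last map is surjective (by Proposition \ref{thm;degree-six-surj}, and by the monad property for $\gamma_X$). They are therefore images of honest modules, and a morphism $M_\beta\to N_\gamma$ corresponds to a nonzero morphism of representations $\beta\to\gamma$ in degree $0$.

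The image of such a morphism is a nonzero quotient of $\beta$. Stability of $\beta$ with slope $\tfrac32$ forces this quotient to have slope $>\tfrac32$, or to be all of $\beta$. The image is also a subrepresentation of $\gamma$, which by Lemma \ref{lem-Hom-vanish} is semistable of slope $1$, so its slope is at most $1$. Dimension vectors with zero second entry would give infinite slope, but they are excluded, since their images would be subrepresentations of $\gamma$ of type $(a,0)$, which have slope $>1$. These bounds are incompatible, so no such morphism exists, and this case is also impossible.

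The main obstacle is the identification $\Hom(M_\beta,N_\gamma)=\Hom_{\mathrm{rep}}(\beta,\gamma)$. I would prove it by applying $\Hom(-,N_\gamma)$ to \eqref{eq-deg6} and $\Hom(V_3\otimes K,-)$, $\Hom(V_2\otimes U,-)$ to the defining sequence of $N_\gamma$. The inputs are the vanishings $\Ext^{>0}(K,K)=\Ext^{>0}(U,U)=\Ext^{>0}(K,U)=\RHom(U,K)=0$. Chasing the resulting sequences writes this Hom group as the kernel of the map from $\Hom(V_3,I^\vee)\oplus\Hom(V_2,I)$ to $\Hom(V_3,I\otimes B^\vee)$ measuring failure of commutativity. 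That kernel is exactly the space of morphisms of Kronecker representations.
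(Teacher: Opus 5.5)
Your proof is correct and follows the paper's argument. You use the same numerical reduction to $(r,m)=(5,-2)$ or $(7,-3)$, the same polystability argument in the first case, and in the second the same lift $M_\beta\to N_\gamma$ via $\RHom(M_\beta,E)=0$, followed by the slope obstruction to a nonzero morphism $\beta\to\gamma$. Your explicit identification of $\Hom(M_\beta,N_\gamma)$ with Kronecker morphisms, and the image-of-morphism slope argument, make precise steps that the paper only asserts.
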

\begin{proof}
  Suppose that $M_{\beta}$ is unstable, and let $F\subset M_{\beta}$ be its maximal destabilizing subsheaf. We may assume that $F$ is saturated and semistable. Write $r\coloneqq \rk(F)$ and $c_1(F)=mH$. Since $F\subset V_3\otimes K$, we have
  \[
  -\frac{4}{9}<\frac{m}{r}\leq -\frac{2}{5}.
  \]
  For $1\leq r\leq 8$, the only possibilities are
  \[
  (r,m)=(5,-2)\quad \text{or}\quad (7,-3).
  \]
  
  If $(r,m)=(5,-2)$, then $F$ is saturated in $V_3\otimes K$. Indeed, $(V_3\otimes K)/F$ is an extension of the torsion-free sheaf $M_{\beta}/F$ by the vector bundle $V_2\otimes U$. Since $F$ has the same slope as $K$, Lemma \ref{lem;degree-six-polystable} gives $F=D\otimes K$ for a line $D\subset V_3$. Since $F\subset M_{\beta}$, the restriction of $\beta_X$ to $D\otimes K$ vanishes and hence, $\beta(D)=0$. This gives a subrepresentation of dimension vector $(1,0)$ which contradicts Kronecker stability.

  If $(r,m)=(7,-3)$, then by Lemma \ref{lem-rank-seven}, there is a stable acyclic bundle $R$, a nonzero morphism $M_{\beta}\to R$, and a Kronecker representation $\gamma$ with kernel $N_{\gamma}$ fitting into \eqref{eq-N-gamma}. By Lemma \ref{lem-Hom-vanish}, $\gamma$ is semistable of slope one.

  Since $\Ext^1(M_{\beta},E)=0$, the morphism $M_{\beta}\to R$ lifts to $M_{\beta}\to N_{\gamma}$. Moreover, $M_{\beta}[1]$ and $N_{\gamma}[1]$ correspond to $\beta$ and $\gamma$. Hence, this lift induces a nonzero morphism of Kronecker representations $\beta\to \gamma$. Since there are no nonzero morphisms from a semistable Kronecker representation of larger slope to one of smaller slope, the lifted morphism $\beta\to\gamma$ is zero, a contradiction. Therefore, $M_{\beta}$ is stable.
\end{proof}

The exceptional pair $(K,U)$ also gives the deformation properties of $M_{\beta}$.

\begin{corollary}\label{cor:tn}
  For every stable $\beta$, we have $\mathrm{End}(M_{\beta})=\C$, $T_{[\beta]}N\cong \Ext^1(M_{\beta},M_{\beta})$, $\Ext^i(M_{\beta},M_{\beta})=0$ for all $i\geq 2$ and $\RHom(M_{\beta},E)=0$.
\end{corollary}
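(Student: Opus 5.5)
We will compute everything by applying $\RHom(-,-)$ to the defining sequence \eqref{eq-deg6} and using the strong exceptional collection $\langle E,K,U,\OO_X\rangle$. Write $M=M_\beta$ and $P\coloneqq V_3\otimes K\to V_2\otimes U$, so that $M[1]$ is quasi-isomorphic to the two-term complex $P$ sitting in the subcategory $\langle K,U\rangle$. The last claim, $\RHom(M,E)=0$, is immediate: $\RHom(K,E)=\RHom(U,E)=0$, so applying $\RHom(-,E)$ to \eqref{eq-deg6} gives the vanishing. This was already noted after Definition~\ref{def;M-beta}.

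For the self-Ext groups, note that $\langle K,U\rangle$ is equivalent to the derived category of representations of the three-arrow Kronecker quiver. The equivalence sends $K\mapsto$ the projective at the source and $U\mapsto$ the projective at the sink, with $\Hom(K,U)=B^\vee$, $\RHom(U,K)=0$, and $K,U$ exceptional. Under this equivalence $M[1]$ corresponds to the representation $\beta$. Hence
\[
\Ext^i(M,M)\cong\Ext^i_{\mathrm{Kr}}(\beta,\beta).
\]
To make this concrete without quoting the equivalence, apply $\RHom(V_3\otimes K,-)$ and $\RHom(V_2\otimes U,-)$ to \eqref{eq-deg6}. Using $\Ext^{>0}(K,K)=\Ext^{>0}(K,U)=\Ext^{>0}(U,U)=0$ and $\RHom(U,K)=0$, one obtains
\[
\RHom(M,M)\simeq\bigl[\operatorname{End}(V_3)\oplus\operatorname{End}(V_2)\longrightarrow\Hom(V_3,V_2\otimes B^\vee)\bigr],
\]
with the standard differential $(a,b)\mapsto \beta a-(b\otimes 1)\beta$. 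This is exactly the complex computing $\Ext^\bullet_{\mathrm{Kr}}(\beta,\beta)$.

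Since the complex has length two, $\Ext^i(M,M)=0$ for $i\geq2$. $\Hom(M,M)$ is the kernel of the differential, namely $\operatorname{End}(\beta)$, which equals $\C$ because stable representations are simple. (Alternatively, $\operatorname{End}(M)=\C$ follows from slope-stability, Lemma~\ref{lem-M-beta-stable}.) Finally, $\Ext^1(M,M)$ is the cokernel $\Hom(V_3,V_2\otimes B^\vee)/\mathfrak{gl}(V_3)\oplus\mathfrak{gl}(V_2)$. By King's construction \cite{Kin94}, $N$ is the GIT quotient of $\Hom(V_3,V_2\otimes B^\vee)^{s}$ by the free action of $(GL(V_3)\times GL(V_2))/\C^*$. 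Its tangent space at $[\beta]$ is therefore precisely this cokernel. Dimension check: $18-(9+4-1)=6=\dim N$, as it should be.

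The main point needing care is the identification of the hypercohomology spectral sequence with the Kronecker complex. In particular, one must check that the induced map $\Hom(V_3\otimes K,V_3\otimes K)\oplus\Hom(V_2\otimes U,V_2\otimes U)\to\Hom(V_3\otimes K,V_2\otimes U)$ is the commutator with $\beta_X$. One must also check that no extra terms appear from $\Ext^1(M,V_3\otimes K)$-type groups; these vanish by the strong exceptionality. I would verify this by writing the double complex $\Hom^\bullet(P,P)$ explicitly.
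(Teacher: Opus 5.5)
Your proposal is correct and follows the paper's own route. The paper says only that the sequence defining $M_\beta$ realizes $M_\beta[1]$ as the image of $\beta$ under the derived equivalence for the exceptional pair $(K,U)$, and that the claims follow from Kronecker stability and semiorthogonality. Your two-term complex $\mathrm{End}(V_3)\oplus\mathrm{End}(V_2)\to\Hom(V_3,V_2\otimes B^\vee)$ makes that identification concrete; it is valid because all higher Ext groups among $K,U$ vanish and $\RHom(U,K)=0$. It is also the standard complex computing $\Ext^\bullet(\beta,\beta)$, so it avoids having to pin down exactly which $(3,2)$-module $M_\beta[1]$ corresponds to.
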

\begin{proof}
  The exact sequence \eqref{eq-deg6} realizes $M_{\beta}[1]$ as the image of $\beta$ under the derived equivalence for the exceptional pair $(K,U)$. The assertions follow from Kronecker stability and semiorthogonality.
\end{proof}

\subsection{The degeneracy locus $\Gamma_X$}
Applying $\Hom(E,-)$ to the sequence \eqref{eq-deg6} gives 
\begin{equation}\label{kcmap}
0\longrightarrow R_{\beta}\longrightarrow V_3\otimes A\xrightarrow{L_{\beta}} V_2\otimes A^\vee\longrightarrow C_{\beta}\longrightarrow 0,
\end{equation}
where $R_{\beta}=\Hom(E,M_{\beta})$ and $C_{\beta}=\Ext^1(E,M_{\beta})$. The universal map on $N$ is 
\[
\mathcal{L}\colon \mathcal{V}_3\otimes A\longrightarrow \mathcal{V}_2\otimes A^\vee.
\]
Applying $\Hom(E,-)$ to \eqref{eq-deg6}, we define the maximal-minor locus $\Gamma_X\subset N$.
\begin{definition}
  Define \(\Gamma_X\coloneqq D_7(\mathcal L)=V\bigl(I_8(\mathcal L)\bigr)\subset N\), where \(I_8(\mathcal L)\) is generated by the maximal minors of \(\mathcal L\).
\end{definition}

\begin{proposition}\label{thm;blowup-center}
There is a closed immersion \(j_X\colon\Delta_X\to N\) such that \(\Gamma_X=j_X(\Delta_X)\) scheme-theoretically. Consequently, \(\Gamma_X\cong\Delta_X\). Moreover, 
\[
  \rk L_\beta
  =
  \begin{cases}
    8,&[\beta]\notin\Gamma_X,\\
    7,&[\beta]\in\Gamma_X.
  \end{cases}
\]
The locus $\Gamma_X$ is smooth of codimension five, and
\[
N_{\Gamma_X/N}\cong \mathcal{H}om\left(\ker(\mathcal{L}|_{\Gamma_X}),\mathrm{coker}(\mathcal{L}|_{\Gamma_X})\right).
\]
\end{proposition}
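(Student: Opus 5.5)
The plan is to identify the kernel of $L_\beta$ with the space of pencils of the net that annihilate a common vector, and then reduce rank jumps to points of $\Delta_X$. First, $L_\beta\colon V_3\otimes A\to V_2\otimes A^\vee$ is the composite of $\beta\otimes\mathrm{id}_A\colon V_3\otimes A\to V_2\otimes B^\vee\otimes A$ with $\mathrm{id}_{V_2}\otimes\mu$. Since $\dim V_3\otimes A=12$ and $\dim V_2\otimes A^\vee=8$, we have $\rk L_\beta\le 8$, and $\dim R_\beta=12-\rk L_\beta$. By \eqref{kcmap}, $\rk L_\beta=8$ if and only if $\Ext^1(E,M_\beta)=0$. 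So the task is to understand when the cokernel $C_\beta$ is nonzero.

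Dually, $C_\beta^\vee$ is the space of $\xi\in V_2^\vee\otimes A$ with $L_\beta^{\vee}(\xi)=0$. Regard $\xi$ as a map $\xi\colon V_2\to A$. Let $\gamma$ be the dual representation, viewed as three pencils $b_1(\cdot),b_2(\cdot),b_3(\cdot)\colon V_2\to B^\vee$ after choosing a basis of $V_3^\vee$. Then the condition reads
\[
\textstyle\sum_{i} q_{\gamma_j(e_i)}(\xi(e_i),-)=0\quad\text{for }j=1,2,3.
\]

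The first step is to show that any nonzero $\xi$ has rank one. Write $\xi=v\otimes a$ with $v\in V_2^\vee$ and $a\in A$. Then the equations become $q_{\beta^\vee_v(d)}(a,-)=0$ for all $d$, i.e.\ the image $\gamma(v)\subset B^\vee$ lies in $\ker\rho_a$. By Lemma \ref{lem;degree-six-rank}(1), $\dim\ker\rho_a\le1$. So $\gamma(v)$ spans at most a line $\langle b\rangle$ with $q_b(a)=0$, and then $[b]\in\Delta_X$. Stability excludes $\gamma(v)=0$, since that gives a $(1,0)$ subrepresentation of $\gamma$. Hence the point $[\beta]$ has a $(1,1)$-type degeneration: $\gamma(v)=d\otimes b$, i.e.\ $\beta$ has a rank-one column pattern. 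The rank-two case for $\xi$ should be excluded by an argument analogous to the rank-two tensor case in Proposition \ref{thm;degree-six-surj}, using Lemma \ref{lem;degree-six-rank}(2). There $\rk\mu_W\ge3$ leaves too little room for a three-dimensional family of relations unless a $(2,2)$ or $(1,0)$ subrepresentation appears. I expect this case exclusion to be the main obstacle: it must show $\dim C_\beta\le1$, hence $\rk L_\beta\ge7$, and it must also pin down $[b]$ uniquely.

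Next comes the construction of $j_X$. For $[b]\in\Delta_X$, the vector $a$ spanning $\ker q_b$ is unique. The condition ``$\beta$ has a row (after changing basis of $V_2$) equal to $d\otimes b$'' should single out a unique stable class $[\beta_b]$. The argument is that the remaining data are forced by requiring $M_\beta$ to contain the structure coming from the line $L_b$: via Proposition \ref{linesv22}, the composite $K\xrightarrow{b}U$ has cokernel $\OO_{L_b}(-1)$, and $\Ext^1(E,M_\beta)$ is nonzero exactly for those $\beta$ built from this. Concretely, I would parametrize such $\beta$ and check, modulo $GL(V_3)\times GL(V_2)$ and stability, that they form one orbit. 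This orbit depends algebraically on $[b]$ and gives a morphism $j_X\colon\Delta_X\to N$, injective since $[b]$ is recovered as above. Thus set-theoretically $\Gamma_X=j_X(\Delta_X)$ and the rank dichotomy holds.

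For the scheme structure and the normal bundle, I would use the standard fact that the first-order deformations of the corank-one locus $D_7(\mathcal L)$ are governed by $d\mathcal L$ composed into $\Hom(\ker,\mathrm{coker})$. This space is $\Hom(\C^5,\C)\cong\C^5$ at a point of $\Gamma_X$, since $\ker\mathcal L$ has dimension $5$ and $\mathrm{coker}\,\mathcal L$ has dimension $1$. It therefore suffices to show that the induced map $T_{[\beta]}N\to\Hom(\ker\mathcal L,\mathrm{coker}\,\mathcal L)$ is surjective. Then $D_7$ is smooth of expected codimension $1\cdot5=5$, and the normal bundle formula follows. As $\dim N=6$, the locus $\Gamma_X$ is a smooth curve. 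Since $j_X$ is an injective morphism from a smooth curve whose differential is nonzero (checked from the explicit orbit description), it is a closed immersion onto the smooth curve $\Gamma_X$, giving $\Gamma_X\cong\Delta_X$ scheme-theoretically. I would verify the surjectivity by a direct local computation at $\beta_b$ using $T_{[\beta]}N\cong\Ext^1(M_\beta,M_\beta)$ from Corollary \ref{cor:tn}.
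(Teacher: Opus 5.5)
Your case analysis is inverted, and this is a genuine gap. The rank-one case that you treat as the source of $\Gamma_X$ never occurs for stable $\beta$. Suppose $\xi=v\otimes a$ and $\gamma(v)=d\otimes b$. For $u\in\ker d\subset V_3$ one has $(v\otimes 1)\beta(u)=d(u)\,b=0$. Hence $(\ker d,\ker v)$ is a subrepresentation of $\beta$ of dimension vector $(2,1)$, with weight $2\cdot2-3\cdot1=1>0$. Equivalently, $(\C v,\C d)$ is the $(1,1)$ subrepresentation of $\gamma$ that is already excluded in the proof of Proposition~\ref{thm;degree-six-surj}.

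Conversely, the rank-two case you plan to exclude is exactly what happens on $\Gamma_X$, and no argument modelled on Proposition~\ref{thm;degree-six-surj} can exclude it. There, the relations lived in $R_x(T)$, of dimension $\le 2$, which forced a dependence among three vectors. Here a rank-two $\xi$ identifies $V_2$ with a plane $W\subset A$, and $L_\beta^\vee(\xi)=0$ becomes $\im\beta\subset\ker\mu_W$. Lemma~\ref{lem;degree-six-rank}(2) only gives $\dim\ker\mu_W\le 3=\dim V_3$, so there is exactly enough room and equality can occur. If both of your exclusions went through, you would have proved $\ker L_\beta^\vee=0$ for every stable $\beta$, i.e.\ $\Gamma_X=\emptyset$.

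The missing idea is to analyze the equality $\im\beta=\ker\mu_W$.
\begin{enumerate}[(i)]
\item Since $\beta$ is injective, the equality forces $\rk\mu_W=3$.
\item So some $0\neq a\in A$ annihilates $\im\mu_W$, which means $W\subset(\im\rho_a)^\perp$.
\item By Lemma~\ref{lem;degree-six-rank}(1), this gives $\rk\rho_a=2$ and $W=(\im\rho_a)^\perp$.
\item Then $\ker\rho_a=\C b$, where $a$ spans $\ker q_b$, so $[b]\in\Delta_X$.
\end{enumerate}

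This also gives the correct definition of $j_X$: take $V_2=W_b\coloneqq(\im\rho_{a_b})^\perp$ and $V_3=\ker\mu_{W_b}$, with $\beta$ the inclusion into $W_b\otimes B^\vee$. Stability follows from $\dim\ker\rho_\omega\le1$ for every $0\ne\omega\in W_b$. Your candidate $\beta_b$ ``with a row $d\otimes b$'' is unstable and defines no point of $N$. So the construction of $j_X$, its injectivity, and the set-theoretic identification $|\Gamma_X|=j_X(\Delta_X)$ all need to be redone on this basis.

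Your tangent-space step is fine in outline. However, the surjectivity of $T_{j_X(b)}N\to\Hom(\ker L,\mathrm{coker}\,L)$ has to be checked with this explicit description. The key input there is $a_b\notin W_b$, which follows from the smoothness of $\Delta_X$ at $[b]$.
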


\begin{proof}
We first construct $j_X$ in families. On $\Delta_X$, the universal symmetric map
\[
q_{\Delta}\colon A\otimes \OO_{\Delta_X}(-1)\longrightarrow A^\vee\otimes \OO_{\Delta_X}
\]
has constant rank three. Let $\mathcal{A}_{\Delta}\coloneqq \ker(q_{\Delta})\otimes \OO_{\Delta_X}(1)\subset A\otimes \OO_{\Delta_X}$. The morphism $B^\vee\otimes \mathcal{A}_{\Delta}\to A^\vee\otimes \OO_{\Delta_X}$ has rank two by Lemma \ref{lem;degree-six-rank}. Let $\mathcal{W}_{\Delta}\subset A\otimes \OO_{\Delta_X}$ be the annihilator of its image. Then $\mathcal{W}_{\Delta}$ has rank two, and
\[
\mathcal{V}_{3,\Delta}=\ker\left(\mathcal{W}_{\Delta}\otimes B^\vee\longrightarrow A^\vee\otimes \OO_{\Delta_X}\right)
\]
has rank three. The inclusion
\[
\mathcal{V}_{3,\Delta}\longrightarrow \mathcal{W}_{\Delta}\otimes B^\vee
\]
is a family of stable $(3,2)$-Kronecker representations. Indeed, for a line $\ell=\C\omega\subset (\mathcal{W}_{\Delta})_b$, we have \(V_{3,b}\cap(\ell\otimes B^\vee)=\ker(\rho_{\omega})\) which satisfies \(\dim\ker(\rho_{\omega})\le1\) by Lemma \ref{lem;degree-six-rank}.
Consequently, no destabilizing dimension vector occurs, and the family defines a morphism $j_X\colon \Delta_X\to N$. 

For $b\in \Delta_X$, write $W_b\coloneqq (\mathcal{W}_\Delta)_b$ and $\C{a_b}=(\cA_{\Delta})_b=\ker(q_b)$. The inclusion $(\mathcal{W}_{\Delta})_b\xhookrightarrow{} A$ gives a nonzero element of $\ker L^\vee_{j_X(b)}$, and thus, \(j_X(\Delta_X)\subset|\Gamma_X|\). 

Conversely, choose $[\beta]\in |\Gamma_X|$ and $0\neq \eta\in \ker L^\vee_{\beta}\subset V_2^\vee\otimes A$. The tensor $\eta$, viewed as $V_2\to A$, cannot have rank one. If \(\eta=\ell\otimes a\), then the row contraction $(\ell\otimes1)\beta\colon V_3\longrightarrow B^\vee$ has image in \(\ker(\rho_a)\), which has \(\dim\ker(\rho_a)\le1\). Its kernel would give a destabilizing subrepresentation of dimension at least $(2,1)$. Hence, the tensor $\eta$ has rank two and identifies $V_2$ with a two-plane $W\subset A$. The equation $L^\vee_{\beta}(\eta)=0$ gives 
\[
\im(\beta)\subset \ker(\mu_W).
\]
Both sides have dimension three by Lemma \ref{lem;degree-six-rank}. Choose $0\neq a\in (\im \mu_W)^{\perp}$. Then $W=(\im(\rho_a))^\perp$ and $\ker \rho_a=\C b$ for a unique $b\in \Delta_X$. It follows that $[\beta]=j_X(b)$. Hence, $j_X(\Delta_X)=|\Gamma_X|$.

At $j_X(b)$, the map $L$ has corank exactly one, since $\Delta_X$ cannot contain a projective line. Differentiating $L$ at $j_X(b)$ gives the normal map
\begin{equation}\label{numa}
\nu_b\colon T_{j_X(b)}N\longrightarrow \Hom(\ker L_{j_X(b)},\mathrm{coker} L_{j_X(b)}.)
\end{equation}
Since $\mu_{W_b}$ is surjective onto its image, a first-order deformation of the Kronecker map induces an arbitrary element of $\Hom(V_{3,b},\im \mu_{W_b})$. The only possible failure of surjectivity would come from 
\[
\ker L_{j_X(b)}\cap (V_{3,b}\otimes \C{a_b}).
\]
If $v\otimes a_b$ belongs to this intersection, then write $\beta_b(v)=w_1\otimes c_1+w_2\otimes c_2$. The equation $L_{\beta_b}(v\otimes a_b)=0$ gives $c_1,c_2\in \ker \rho_{a_b}=\C b$, and therefore, $\beta_b(v)=w\otimes b$. Since $\beta_b(v)\in \ker \mu_{W_b}$, one has $w\in \ker q_b=\C a_b$. Since $\Delta_X$ is smooth, the vector $a_b$ does not lie in $W_b$. Hence, $w=0$ and stability gives $v=0$, which implies $\nu_b$ is surjective.

The determinantal locus is therefore smooth of expected codimension five along its support. Thus, the finite bijection $j_X\colon \Delta_X\to |\Gamma_X|$ is in fact an isomorphism. In particular, $\Gamma_X$ is reduced. Since its support is $j_X(\Delta_X)$, the equality $\Gamma_X=j_X(\Delta_X)$ holds scheme-theoretically. The normal bundle follows from the differential of the maximal-minor equations.
\end{proof}

Let $p\colon \Gr_{N}(4,\mathcal{V}_3\otimes A)\to N$ be the relative Grassmannian, and let $\mathcal{W}_6$ be its tautological bundle of rank four. Define 
\begin{equation}\label{eq:relze}
G_6\coloneqq Z(\mathcal{W}_6\longrightarrow p^{\ast}(\mathcal{V}_3\otimes A)\xrightarrow{p^{\ast}\cL} p^{\ast}(\mathcal{V}_2\otimes A^\vee)).
\end{equation}
This is the vanishing locus of the composition map. Its closed points are the pairs $([\beta],W)$ satisfying $W\subset \ker L_{\beta}$. Let 
\begin{equation}\label{blmap}
  \pi_6\colon G_6\to N
\end{equation} 
be the restriction of $p$, and use the same notation $\mathcal{W}_6$ for the restricted tautological bundle.

\begin{corollary}\label{cor-blowup}
There is an isomorphism
  \[
  G_6\cong \mathrm{Bl}_{\Gamma_X}N.
  \]
  The fiber of $\pi_6$ is a point over $N\setminus \Gamma_X$ and is $\Gr(4,5)\cong \P^4$ over every point of $\Gamma_X$.
\end{corollary}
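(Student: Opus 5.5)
The plan is to apply Lemma \ref{lem-blowup} directly to the universal map $\mathcal{L}\colon \mathcal{V}_3\otimes A\to \mathcal{V}_2\otimes A^\vee$ on the smooth sixfold $T=N$. Here $\rk(\mathcal{V}_3\otimes A)=12$ and $\rk(\mathcal{V}_2\otimes A^\vee)=8$. In the notation of Lemma \ref{lem-blowup} this gives $b=8$ and $b+r=12$, so $r=4$. The relative Grassmannian $\Gr_N(4,\mathcal{V}_3\otimes A)$ and the zero scheme $G_6$ in \eqref{eq:relze} are then exactly the objects $\Gr_T(r,\cA)$ and $Y_L$ of that lemma. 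It remains to check its hypotheses:
\begin{enumerate}[(a)]
\item $Z=D_{b-1}(\mathcal{L})=D_7(\mathcal{L})$, with its maximal-minor scheme structure, is smooth of codimension $r+1=5$;
\item $D_{b-2}(\mathcal{L})=D_6(\mathcal{L})=\emptyset$.
\end{enumerate}

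Both follow from Proposition \ref{thm;blowup-center}. By definition $Z=\Gamma_X=V(I_8(\mathcal{L}))$, and that proposition shows $\Gamma_X\cong\Delta_X$ scheme-theoretically, smooth of codimension five in $N$. Since $\dim N=6$ and $\Delta_X$ is a curve, the codimension count is consistent, which settles (a). For (b), the same proposition gives $\rk L_\beta\ge 7$ at every point of $N$, with rank $8$ off $\Gamma_X$ and rank $7$ on $\Gamma_X$. Hence the locus of rank at most $6$ is empty. Lemma \ref{lem-blowup} then yields an isomorphism $G_6\cong \mathrm{Bl}_{\Gamma_X}N$ over $N$.

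For the fibers, a closed point of $\pi_6^{-1}([\beta])$ is a $4$-plane $W\subset\ker L_\beta$. We compute $\dim\ker L_\beta=12-\rk L_\beta$:
\begin{itemize}
\item for $[\beta]\notin\Gamma_X$, $\dim\ker L_\beta=4$, so the fiber is the single point $W=\ker L_\beta$;
\item for $[\beta]\in\Gamma_X$, $\dim\ker L_\beta=5$, so the fiber is $\Gr(4,5)\cong\P^4$.
\end{itemize}
This agrees with the exceptional divisor $\P(N_{\Gamma_X/N})$, which is a $\P^4$-bundle over the curve $\Gamma_X$, since the normal bundle has rank $5$. We should also verify that the scheme-theoretic fiber over a point of $\Gamma_X$ is the reduced $\Gr(4,5)$ and not something with extra structure. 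This is built into the identification with the blow-up, using the description $N_{\Gamma_X/N}\cong\mathcal{H}om(\ker,\mathrm{coker})$ from Proposition \ref{thm;blowup-center}: here $\ker$ has rank $5$ and $\mathrm{coker}$ has rank $1$, so the projectivized normal bundle is $\P(\ker^\vee)\cong\Gr(4,\ker)$.

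The main subtlety is that Lemma \ref{lem-blowup} needs the maximal-minor scheme structure on $Z$ to be smooth, not merely its support. Proposition \ref{thm;blowup-center} provides this: it proves that the normal map $\nu_b$ is surjective, which gives smoothness of the determinantal scheme of the expected codimension. So this step is just a matter of quoting the proposition carefully rather than reproving anything.
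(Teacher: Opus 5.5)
Your proposal is correct and follows the paper's own proof. The paper likewise applies Lemma \ref{lem-blowup} to $\mathcal{L}$ with $b=8$, $r=4$, and invokes Proposition \ref{thm;blowup-center} for the smoothness of $\Gamma_X=D_7(\mathcal{L})$ in codimension five and for $D_6(\mathcal{L})=\emptyset$; your fiber count via $\dim\ker L_\beta\in\{4,5\}$ just makes explicit what the paper leaves implicit.
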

\begin{proof}
  The bundles in $\cL$ have ranks twelve and eight. By Lemma \ref{lem-blowup} ($b=8$ and $r=4$), $D_7(\cL)=\Gamma_X$ is smooth of codimension $5$ and $D_6(\cL)=\emptyset$. By Proposition \ref{thm;blowup-center}, we conclude the proof. 
\end{proof}

\subsection{The Hilbert scheme of rational sextics}
For $([\beta],W)\in G_6$, the inclusion $W\subset R_{\beta}=\Hom(E,M_{\beta})$ gives an evaluation morphism
\[
u_{\beta,W}\colon W\otimes E\longrightarrow M_{\beta}.
\]
By Lemma \ref{lem-M-beta-stable} and Corollary \ref{cor-exact}, we obtain
\begin{equation}\label{eq-I}
    0\longrightarrow W\otimes E\longrightarrow M_{\beta}\longrightarrow \mathcal{I}_{\mathcal{C}_{\beta,W}}\longrightarrow 0,
  \end{equation}
  where $\mathcal{C}_{\beta,W}$ is a Cohen--Macaulay curve of degree six and arithmetic genus zero. Let $\mathcal{M}_6$ be the pullback to $G_6\times X$ of the universal kernel on $N\times X$. The universal evaluation map is
\begin{equation}\label{eq:un6}
\mathbf{u}_6\colon \mathcal{W}_6\boxtimes E\longrightarrow \mathcal{M}_6.
\end{equation}
Let $\Lambda_6=(\det \mathcal{V}_3)^5\otimes (\det \mathcal{V}_2)^{-3}\otimes (\det \mathcal{W}_6)^{-2}$.
Proposition \ref{prop-family} gives a flat family and a morphism
\[
\Psi_6\colon G_6\longrightarrow \bH_6(X),\quad (\beta,W)\mapsto [C_{\beta,W}].
\]

We next show that $I_{C_{\beta,W}}$ determines $[\beta]$ and $W$. 

\begin{lemma}\label{lem-deg6-recover}
  The ideal sheaf $\mathcal{I}_{{\mathcal C}_{\beta,W}}$ determines $[\beta]\in N$ and $W\subset \Hom(E,M_{\beta})$.
\end{lemma}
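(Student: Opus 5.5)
The plan is to mimic the proof of Lemma \ref{lem-deg5-inj}: first recover the Kronecker data $[\beta]$ from $I\coloneqq\mathcal{I}_{\mathcal{C}_{\beta,W}}$ through the exceptional pair $(K,U)$, then recover $W$ via Lemma \ref{lem-recover W}.

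\emph{Step 1: recovering $M_\beta$ and the quotient map.} We apply $\RHom(K,-)$ and $\RHom(U,-)$ to \eqref{eq-I} and \eqref{eq-deg6}. Since $\RHom(K,E)=\RHom(U,E)=0$, we get $\RHom(K,I)\cong\RHom(K,M_\beta)$ and $\RHom(U,I)\cong\RHom(U,M_\beta)$. From \eqref{eq-deg6}, exceptionality of $K$ and $U$, and $\RHom(U,K)=0$, we compute
\[
\RHom(U,M_\beta)\cong V_2[-1],
\]
so $\Ext^1(U,I)\cong V_2$ and every other $\Ext^i(U,I)$ vanishes. Similarly $\RHom(K,M_\beta)$ is the cone of $V_3\to V_2\otimes B^\vee$, that is, of $\beta$ itself. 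Stability of $\beta$ gives injectivity, so $\Hom(K,I)=0$ and $\Ext^1(K,I)\cong (V_2\otimes B^\vee)/\beta(V_3)$.

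The cleanest route is the left mutation through $\langle K,U\rangle$. The projection of $I$ onto the admissible subcategory $\langle K,U\rangle$ (the cone of $I\to$ its component in $\langle E\rangle$) is exactly $M_\beta$, because $W\otimes E\in\langle E\rangle$ and $M_\beta\in\langle K,U\rangle$. This gives the triangle $W\otimes E\to M_\beta\to I$. Concretely, we form the universal extension
\[
0\to I\to P\to \Ext^1(U,I)\otimes U\to 0.
\]
This plays the role of $P_C$ in the degree-five case; by construction $P\cong (V_3\otimes K)/(W\otimes E)$. The data $V_2=\Ext^1(U,I)$, together with the map
\[
\Hom(K,P)\cong V_3\to \Ext^1(U,I)\otimes\Hom(K,U)=V_2\otimes B^\vee,
\]
then recovers $\beta$ up to the $\mathrm{GL}(V_3)\times\mathrm{GL}(V_2)$-action, hence $[\beta]\in N$. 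Along the way we check $\Hom(K,P)\cong V_3$ using $\Hom(K,I)=0$ and $\Ext^1(K,I)\to V_2\otimes\Ext^1(K,U)=0$, and show that the induced map agrees with $\beta$ by comparing with the diagram built from \eqref{eq-deg6}.

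\emph{Step 2: recovering $W$.} Once $[\beta]$ is known, $M_\beta$ is determined up to isomorphism. Since $\RHom(M_\beta,E)=0$ by Corollary \ref{cor:tn}, we get $\Hom(M_\beta,I)\cong\mathrm{End}(M_\beta)=\C$, so the quotient $M_\beta\to I$ is unique up to scalar. Lemma \ref{lem-recover W} then gives $W=\ker(\Hom(E,M_\beta)\to\Hom(E,I))$.

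The main obstacle is Step 1: making sure the reconstruction is intrinsic, i.e.\ that $\Ext^1(U,I)$ has the right dimension and that $\Hom(K,P)\to V_2\otimes B^\vee$ really is $\beta$ rather than a degenerate version. I would handle this by writing the extension class explicitly as the connecting map of the pushout of \eqref{eq-deg6} along $M_\beta\to I$, which identifies $P$ with $(V_3\otimes K)/(W\otimes E)$. The vanishing $\Hom(U,P)=0$ then follows from semiorthogonality, exactly as in Lemma \ref{lem-deg5-inj}.
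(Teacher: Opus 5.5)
Your proof is correct, but it follows a different route from the paper's proof of this lemma.

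\textbf{The paper's route (through $E$).} Applying $\RHom(-,E)$ to \eqref{eq-I} gives $\RHom(I,E)\cong W^\vee[-1]$. Hence the right mutation triangle $\mathbf R_E(I)\to I\to \RHom(I,E)^\vee\otimes E$ coincides with the triangle $W\otimes E\to M_\beta\to I\to W\otimes E[1]$. This recovers $M_\beta\cong\mathbf R_E(I)$ and $W\cong\Ext^1(I,E)^\vee$, together with its map to $\Hom(E,M_\beta)$, in one step. The class $[\beta]$ is then read off from $M_\beta[1]\in\langle K,U\rangle$ via the Kronecker equivalence.

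Your side remark about projecting $I$ to $\langle K,U\rangle$ is exactly this argument. One correction there: the projection is the fiber of $I\to W\otimes E[1]$, that is, its cone shifted by $[-1]$, not its cone.

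\textbf{Your route (through $U$ and $K$).} What you actually carry out is the degree-five strategy of Lemma \ref{lem-deg5-inj}:
\begin{itemize}
\item $V_2\cong\Ext^1(U,I)$;
\item the universal extension is $P\cong (V_3\otimes K)/(W\otimes E)$, which is correctly justified by $\Hom(U,P)=0$ together with $\dim\Ext^1(U,I)=2$;
\item $\beta$ is recovered intrinsically as the composite $\Hom(K,P)\to \Ext^1(U,I)\otimes B^\vee$;
\item $W$ then comes from $\Hom(M_\beta,I)\cong\mathrm{End}(M_\beta)=\C$ and Lemma \ref{lem-recover W}.
\end{itemize}

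\textbf{Comparison.} Your version spells out concretely the step the paper compresses into ``the Kronecker equivalence recovers $[\beta]$.'' The mutation route is shorter and produces $M_\beta$ and $W$ simultaneously and canonically. Both routes need $\mathrm{End}(M_\beta)=\C$ to place $W$ inside the $M_\beta$ built from $\beta$.

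\textbf{A small simplification.} Once you know $P\cong(V_3\otimes K)/(W\otimes E)$, the vanishing $\RHom(K,E)=0$ gives $\RHom(K,P)\cong V_3$ at once. So the check through $\Hom(K,I)$ and $\Ext^1(K,I)$ is unnecessary. As written, that check only yields $\dim\Hom(K,P)\ge 3$ unless you also know $\Ext^1(K,P)=0$.
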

\begin{proof}
  Applying $\RHom(-,E)$ to \eqref{eq-I} gives $\RHom(I_{C_{\beta,W}},E)\cong W^\vee[-1]$. Comparing the right mutation triangle of $I_{C_{\beta,W}}$ through $E$ with the triangle induced by \eqref{eq-I}, we obtain
\[
\mathbf R_E(I_{C_{\beta,W}})\cong M_\beta, \qquad
W\cong\Ext^1(I_{C_{\beta,W}},E)^\vee.
\]
The Kronecker equivalence then recovers $[\beta]$ from $M_\beta[1]$.
\end{proof}
\begin{proposition}\label{thm-deg6-iso}
  The morphism
  \[
  \Psi_6\colon G_6\longrightarrow \bH_6(X)
  \]
  is an isomorphism. Consequently, $\bH_6(X)\cong \mathrm{Bl}_{\Gamma_X}\mathbf{K}_3(3,2)$.
\end{proposition}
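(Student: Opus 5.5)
The plan follows the pattern of Propositions \ref{thm-deg4-iso} and \ref{thm-deg5-iso}. We show three things: $\Psi_6$ is injective on closed points, the Hilbert scheme is smooth of dimension six at every point of the image, and $G_6$ is smooth. Zariski's Main Theorem then gives the isomorphism onto the component $\bH_6(X)$. Injectivity on closed points is Lemma \ref{lem-deg6-recover}: the ideal sheaf $I=\mathcal I_{\mathcal C_{\beta,W}}$ recovers $M_\beta$ as the right mutation $\mathbf R_E(I)$, hence $[\beta]$ via the Kronecker equivalence, and it recovers $W$ as $\Ext^1(I,E)^\vee$. Smoothness of $G_6$ comes from Corollary \ref{cor-blowup}, since $G_6\cong\mathrm{Bl}_{\Gamma_X}N$ is a blow-up of the smooth sixfold $N$ along the smooth curve $\Gamma_X$. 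In particular $G_6$ is smooth, irreducible and projective of dimension six. Injectivity therefore gives $\dim_{[C]}\bH_6(X)\ge 6$ at every point $[C]$ of the image.

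Next I would compute $\Ext^1(I,I)$ from \eqref{eq-I}. Applying $\RHom(E,-)$ to \eqref{eq-I} and to \eqref{eq-deg6} gives $\Hom(E,I)\cong (R_\beta/W)\oplus C_\beta$, because $\Ext^1(E,M_\beta)=C_\beta$ and $E$ is exceptional. Also $\Ext^{i}(E,I)=\Ext^{i+1}(E,M_\beta)=0$ for $i\ge1$; this vanishing follows from \eqref{eq-deg6} and the strong exceptional collection.

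Since $\RHom(M_\beta,E)=0$, we get $\RHom(M_\beta,I)\cong\RHom(M_\beta,M_\beta)$. Corollary \ref{cor:tn} then gives $\Hom=\C$, $\Ext^1=T_{[\beta]}N$, and higher groups zero. Combined with Lemma \ref{lem-endo}, applying $\RHom(-,I)$ to \eqref{eq-I} yields the exact sequence
\[
0\longrightarrow \Hom(W,\Hom(E,I))\longrightarrow \Ext^1(I,I)\longrightarrow T_{[\beta]}N\longrightarrow \Hom(W,\Ext^1(E,I))=0 .
\]
So $\ext^1(I,I)=6+4\dim\Hom(E,I)$ with the count above.

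Off $\Gamma_X$ this count is already six. There $R_\beta$ has dimension four by \eqref{kcmap}, using $\rk L_\beta=8$ from Proposition \ref{thm;blowup-center}, so $W=R_\beta$ and $C_\beta=0$. Hence $\Hom(E,I)=0$ and $\ext^1(I,I)=6$.

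The main obstacle is a point $([\beta],W)$ with $[\beta]\in\Gamma_X$. There $\dim R_\beta=5$ and $\dim C_\beta=1$, so the naive count is $6+4\cdot 2=14$, which is far too large. The sequence above is therefore wrong as written: the composite $\Ext^1(I,I)\to T_{[\beta]}N$ need not be defined on all of $\Ext^1(I,I)$, and the connecting maps must be handled more carefully. A deformation of $I$ need not come from deforming the pair $(M_\beta,W)$ with $W$ fixed inside $\Hom(E,M_\beta)$. To handle this, I would work with the derived description. Mutation identifies deformations of $I$ with deformations of the complex $[W\otimes E\to M_\beta]$, that is, of the object $\mathbf R_E(I)$ together with the class in $\Ext^1(I,E)$. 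The tangent space is then governed by the cone of the derivative of $\mathcal L$. The key input is the surjectivity of the normal map $\nu_b$ in \eqref{numa} from Proposition \ref{thm;blowup-center}: it should make the extra directions $\Hom(W,R_\beta/W)\oplus\Hom(W,C_\beta)$ cancel against the directions of $T_{[\beta]}N$ normal to $\Gamma_X$. This matches the tangent space of the zero scheme \eqref{eq:relze}, namely $T_{[\beta]}\Gamma_X\oplus\Hom(W,R_\beta/W)$ plus the image of the tangent directions of $N$ normal to $\Gamma_X$, which has total dimension $1+4+1=6$.

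Concretely, I would first try to identify $\Ext^1(I,I)$ with the Zariski tangent space of $G_6$ at $([\beta],W)$. The approach is to compare the universal sequence \eqref{eq:un6} with the deformation long exact sequence, showing that the Kodaira--Spencer map $T G_6\to\Ext^1(I,I)$ is an isomorphism; injectivity would follow from Lemma \ref{lem-deg6-recover} applied over the dual numbers. This gives $\ext^1(I,I)=6\le\dim_{[C]}\bH_6(X)$, so $\bH_6(X)$ is smooth at every point of the image by Proposition \ref{prop-Hilb-tan}. The image is then an open and closed subset of the component, hence all of it, and $\Psi_6$ is a bijective morphism between smooth varieties, so an isomorphism by Zariski's Main Theorem.
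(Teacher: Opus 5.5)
There is a genuine gap, and it comes from a computational slip. Apply $\RHom(E,-)$ to \eqref{eq-I}, using that $\RHom(E,W\otimes E)=W$ is concentrated in degree $0$. This gives $\Hom(E,I)\cong R_\beta/W$ and $\Ext^i(E,I)\cong\Ext^i(E,M_\beta)$ for $i\ge1$. In particular $\Ext^1(E,I)\cong C_\beta$, not $0$, and $C_\beta$ does not contribute to $\Hom(E,I)$. You shifted the index: $\Ext^i(E,I)$ is $\Ext^i(E,M_\beta)$, not $\Ext^{i+1}(E,M_\beta)$.

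With the correct values, the long exact sequence you obtained from $\RHom(-,I)$ is perfectly valid. It reads
\[
0\to\Hom(W,R_\beta/W)\to\Ext^1(I,I)\to\Ext^1(M_\beta,M_\beta)\xrightarrow{\ \delta\ }\Hom(W,C_\beta)\to\Ext^2(I,I)\to 0 .
\]
Over $\Gamma_X$ this gives $\dim\Ext^1(I,I)=4+6-\rk\delta$, so everything reduces to showing that $\delta$ is surjective. Your diagnosis that the sequence is ``wrong as written'' is therefore mistaken. Likewise, nothing is wrong with the map $\Ext^1(I,I)\to T_{[\beta]}N$.

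The detour you take instead does not close the gap. You propose to show that the Kodaira--Spencer map $TG_6\to\Ext^1(I,I)$ is an isomorphism. The only half you justify is injectivity, via Lemma \ref{lem-deg6-recover} over the dual numbers. That gives $\dim\Ext^1(I,I)\ge 6$, which is the wrong inequality for Proposition \ref{prop-Hilb-tan}. The bound you actually need, $\dim\Ext^1(I,I)\le 6$, is only asserted (the extra directions ``should cancel'').

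The missing step, which is exactly what the paper uses, is to identify $\delta$. Composing a first-order deformation of $M_\beta$ with $W\otimes E\to M_\beta$ measures the obstruction to lifting the sections $W\subset\Hom(E,M_\beta)$. This obstruction is the derivative of $L$, so $\delta$ is the map $\nu_b$ of \eqref{numa} followed by restriction $\Hom(R_\beta,C_\beta)\to\Hom(W,C_\beta)$. Both maps are surjective: $\nu_b$ by Proposition \ref{thm;blowup-center}, and restriction because $W\subset R_\beta$. Hence $\delta$ is onto, $\dim\ker\delta=2$, and $\dim\Ext^1(I,I)=4+2=6$.

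The rest of your plan agrees with the paper's proof: injectivity on closed points, smoothness of $G_6$, the count off $\Gamma_X$, and the conclusion via Zariski's Main Theorem.
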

\begin{proof}
  By Lemma \ref{lem-deg6-recover}, $\Psi_6$ is injective on closed points. On the other hand,  at $([\beta],W)\in G_6$, let $\mathcal{C}_{\beta,W}=\Psi_6(\beta,W)$. Applying $\RHom(E,-)$ to \eqref{eq-I} gives
  \[
  \Hom(E,\cI_C)=R_{\beta}/W,\quad \Ext^1(E,\mathcal{I}_{{\mathcal C}_{\beta,W}})=C_{\beta},\quad \Ext^i(E,\mathcal{I}_{{\mathcal C}_{\beta,W}})=0  (i\geq 2).
  \]
  Moreover, since $\RHom(M_{\beta},E)=0$, we have $\RHom(M_{\beta},\cI_C)\cong \RHom(M_{\beta},M_{\beta})$. In particular, $\Hom(M_{\beta},\mathcal{I}_{{\mathcal C}_{\beta,W}})=\mathrm{End}(M_{\beta})=\C$. Lemma \ref{lem-endo} gives $\Hom(\mathcal{I}_{{\mathcal C}_{\beta,W}},\mathcal{I}_{{\mathcal C}_{\beta,W}})=\C$, and the map $\Hom(\mathcal{I}_{{\mathcal C}_{\beta,W}},\mathcal{I}_{{\mathcal C}_{\beta,W}})\to \Hom(M_{\beta},\mathcal{I}_{{\mathcal C}_{\beta,W}})$ is an isomorphism. Applying $\RHom(-,\mathcal{I}_{{\mathcal C}_{\beta,W}})$ to \eqref{eq-I} gives a long sequence:
  \begin{align}\label{del}
    0\longrightarrow \Hom(W,R_{\beta}/W)&\longrightarrow \Ext^1(\mathcal{I}_{{\mathcal C}_{\beta,W}},\mathcal{I}_{{\mathcal C}_{\beta,W}})\longrightarrow \Ext^1(M_{\beta},M_{\beta})\\
  &\stackrel{\delta}{\longrightarrow} \Hom(W,C_{\beta})\longrightarrow \Ext^2(\mathcal{I}_{{\mathcal C}_{\beta,W}},\mathcal{I}_{{\mathcal C}_{\beta,W}})\longrightarrow 0.
  \end{align}
Note that $T_{[\beta]}N\cong \Ext^1(M_{\beta},M_{\beta})$ by Corollary \ref{cor:tn}. Away from $\Gamma_X$, one has $R_{\beta}=W$ and $C_{\beta}=0$. Hence $\dim \Ext^1(\mathcal{I}_{{\mathcal C}_{\beta,W}},\mathcal{I}_{{\mathcal C}_{\beta,W}})=6$. On $\Gamma_X$, the surjectivity of the map $\nu_b$ in \eqref{numa} of Proposition
\ref{thm;blowup-center} implies that the map $\delta$ in \eqref{del} is also surjective. It follows again that $\dim \Ext^1\bigl(\mathcal{I}_{{\mathcal C}_{\beta,W}},
\mathcal{I}_{{\mathcal C}_{\beta,W}}\bigr)=6$. Therefore, by the same reasoning as in Proposition \ref{thm-deg4-iso}, we conclude that $ \Psi_6$ is an isomorphism.
\end{proof}

\begin{proposition}\label{exi6}
A general point of $G_6$ parametrizes a smooth rational sextic curve on $X$.
\end{proposition}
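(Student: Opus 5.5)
We follow the pattern of Propositions \ref{exi4} and \ref{exi5}. By \cite[Theorem 1.2]{BJ22}, there is a free smooth rational sextic $C\cong\PP^1$ on $X$. By the Grauert--M\"ulich splitting \cite[Proposition 3.1]{PRT24}, $E|_C\cong\cO_{\PP^1}(-3)^{\oplus 2}$ and $U|_C$ is balanced of degree $-6$, namely $\cO_{\PP^1}(-2)^{\oplus 3}$. Hence $H^0(C,E|_C)=H^0(C,U|_C)=0$. Lemma \ref{lem:qv} gives $H^0(C,Q^\vee|_C)=0$. Riemann--Roch then yields
\[
h^1(C,E|_C)=4,\qquad h^1(C,U|_C)=3,\qquad h^1(C,Q^\vee|_C)=2 .
\]
Feeding these into the Beilinson spectral sequence of \cite[Corollary 7.3]{Fae07} for $I_C$ gives a complex
\[
0\lr V_4\otimes E\xrightarrow{\alpha} V_3\otimes K\xrightarrow{\beta_X} V_2\otimes U\lr 0,\qquad \dim V_i=i,
\]
whose middle cohomology is $I_C$. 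The remaining cohomology terms vanish, so $\alpha$ is injective and $\beta_X$ is surjective.

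The map $\beta_X$ comes from a Kronecker representation $\beta\colon V_3\to V_2\otimes B^\vee$. The main step is to show that $\beta$ is stable. Once this holds, $\ker\beta_X=M_\beta$, and $I_C$ is presented as in \eqref{eq-I}. By Lemma \ref{lem-recover W}, $V_4\subset R_\beta=\Hom(E,M_\beta)$, so $([\beta],V_4)\in G_6$ and $\Psi_6([\beta],V_4)=[C]$. Since $G_6$ is irreducible (being $\mathrm{Bl}_{\Gamma_X}N$ by Corollary \ref{cor-blowup}) and the universal family is flat (Proposition \ref{prop-family}), smoothness of fibers is an open condition. It is nonempty, so a general fiber is a smooth rational sextic.

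For stability we exclude each destabilizing subrepresentation $(V_3',V_2')$ with $2\dim V_3'-3\dim V_2'\ge 0$. The possible dimension vectors are $(1,0),(2,0),(3,0),(2,1),(3,1),(3,2)$, where $(3,2)$ is excluded as not proper.

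\emph{Vectors with $V_2'=0$.} A subrepresentation $(V_3',0)$ means that $V_3'\otimes K\subset\ker\beta_X$. Then $\alpha$ would have to hit $V_3'\otimes K$ inside the kernel. Comparing slopes, as in Proposition \ref{prop-inj}, and using that $I_C$ has rank one, this forces the cohomology to contain a copy of $K$ modulo images of $E$. We rule this out using $\Hom(K,I_C)\cong H^0$ computations, namely $\Hom(K,I_C)=0$ for a curve not lying in special loci. Alternatively, the composite $V_3'\otimes K\to I_C$ contradicts $\mathrm{rk}=1$ together with the stability of $K$.

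\emph{Vectors $(2,1)$ and $(3,1)$.} These mean that the image of $V_3'$ lies in $\ell\otimes B^\vee$ for a line $\ell\subset V_2$. Projecting to $V_2/\ell$, the induced map $V_3/V_3'\otimes K\to (V_2/\ell)\otimes U$ must still be surjective. In the $(3,1)$ case, the map $V_3\to (V_2/\ell)\otimes B^\vee$ has image of dimension at most $3$ but is otherwise unconstrained, so the argument must instead use the component $V_3\to\ell\otimes B^\vee$. In the $(2,1)$ case, $V_3/V_3'$ is one-dimensional and a single $b\in B^\vee$ must make $K\to U$ surjective. That $b$ lies off $\Delta_X$ (Proposition \ref{linesv22}), but then $\ker(b_X)=E$-like, namely a rank-two bundle with $\Hom(E,-)$ too large. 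This contradicts the injectivity of $\alpha$ with $\dim V_4=4$ via a count of $\Hom(E,\ker)$, using Proposition \ref{prop-surj} and Lemma \ref{lem;degree-six-rank}.

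I expect this stability verification to be the main obstacle. If the direct case analysis becomes messy, the fallback is to bound the destabilizing cases using the slope inequalities of Lemma \ref{lem-M-beta-stable} applied to $\ker\beta_X$, which is torsion-free and contains the image of $V_4\otimes E$ with rank-one quotient $I_C$. Any Kronecker-destabilizing subspace produces a subsheaf of $\ker\beta_X$ of slope at least $-\tfrac{2}{5}$, which is incompatible with $V_4\otimes E\subset\ker\beta_X$ having torsion-free cokernel $I_C$ of $c_1=0$.
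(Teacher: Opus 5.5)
Your skeleton matches the paper's proof: the Beilinson complex $V_4\otimes E\to V_3\otimes K\to V_2\otimes U$ (your cohomology counts are right), then stability of $\beta$, then $([\beta],V_4)\in G_6$ and openness. The gap is in the step you yourself call the main one. Stability of $\beta$ is not proved, and the arguments you give for it do not work.

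\emph{The $(1,0)$ case.} Applying $\Hom(K,-)$ to the complex and using $\RHom(K,E)=0$ and $\Hom(K,U)=B^\vee$ gives $\Hom(K,I_C)\cong\ker\beta$. So asserting $\Hom(K,I_C)=0$ just restates the claim. Stability of $K$ also gives no obstruction to a nonzero map $K\to I_C$, because $\mu_H(K)=-\tfrac25<0$. Indeed, $K$ surjects onto the ideal of every quartic $C_W$ by \eqref{eq-deg4}.

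\emph{The $(2,1)$ and $(3,1)$ cases.} In the $(2,1)$ case your key assertion is backwards: $b\notin\Delta_X$ means $q_b$ is an isomorphism, so $\RHom(E,\ker b_X)=0$, not ``too large''. In the $(3,1)$ case the map $V_3\to (V_2/\ell)\otimes B^\vee$ is not unconstrained; it is zero, which already contradicts surjectivity of $\beta_X$.

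\emph{The slope fallback.} It cannot work. A rank-$r$ subsheaf of $\ker\beta_X$ with nonzero image in $I_C$ only satisfies $\mu_H\le-\tfrac{r-1}{2r}$. This bound is attained by $v\otimes K$ ($r=5$) and by $M_S$ ($r=7$, slope $-\tfrac37$, which is also not $\ge-\tfrac25$). Numerics alone therefore never give a contradiction. You must use that $C$ is a sextic curve, which is what the paper's one-line argument invokes: a destabilizing subrepresentation produces a curve of smaller degree.

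Here is how to close the gap. The vectors $(2,0)$ and $(3,0)$ reduce to $(1,0)$, and $(3,1)$ is excluded as above.

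For $(1,0)$, suppose $\beta(v)=0$. The composite $\sigma\colon K\cong v\otimes K\subset\ker\beta_X\to I_C$ is nonzero because $\Hom(K,E)=0$. Its kernel is $(v\otimes K)\cap(V_4\otimes E)$. This kernel is saturated in $V_4\otimes E$, since the quotient embeds in $(V_3/v)\otimes K$. Write $c_1(\im\sigma)=-D$ with $D\ge0$. Semistability of $V_4\otimes E$ forces $D=0$, so $\mu_H(\ker\sigma)=-\tfrac12$. Lemma~\ref{lem;degree-six-polystable} then gives $\ker\sigma=T'\otimes E$ with $\dim T'=2$. By Proposition~\ref{prop-deg4-inj}, $\im\sigma\cong\cI_{C'}$ for a quartic $C'$. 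Since $\Hom(\cI_{C'},\cO_X)=\C$, we get $\cI_{C'}\subset\cI_C$, i.e.\ $C\subset C'$, which is impossible because $6>4$.

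For $(2,1)$, with $(1,0)$ already excluded, the map $V_3'\to\ell\otimes B^\vee$ is injective with image a $2$-plane $S$. The composite $V_4\otimes E\to\ker\beta_X\to\ker b_X$ vanishes because $\RHom(E,\ker b_X)=0$. Hence $V_4\otimes E$ lands in $\ker(V_3'\otimes K\to\ell\otimes U)=M_S$. By Lemma~\ref{MSst}, $M_S$ has rank $7<8$, which contradicts injectivity of $\alpha$.
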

\begin{proof}
Let $C\subset X$ be a general free smooth rational sextic. As in the proof of Proposition \ref{exi5}, it remains to verify that the three-arrow Kronecker representation
\[
\beta\colon V_3\longrightarrow V_2\otimes B^\vee
\]
induced by the morphism $V_3\otimes K\longrightarrow V_2\otimes U$ arising from the Beilinson spectral sequence is stable. Any destabilizing subrepresentation would give rise to a proper curve of smaller degree contained in $C$, contradicting the integrality of the curve $C$.
\end{proof}

\subsection{Bisecant conics and Kronecker representations}\label{subsec-3conic}
In Corollary \ref{cor-blowup}, we proved that the blow-up of $N$ along $\Gamma_X\cong \Delta_X$ is isomorphic to the Hilbert scheme $\bH_6(X)$. This birational identification of $N$ with $\Hilb^3(\P(B))$ suggests the following interpretation. The base space $N$ parametrizes an unordered triple of conics in $X$ under the birational identification of $N$ with the Hilbert scheme of three points in $\PP(B)\cong \bH_2(X)$. In this subsection, we prove that for a general rational sextic curve $C$ in $X$, the image $\pi_6([C])\in N$ under the blow-up map $\pi_6$ in \eqref{blmap} parametrizes three conics which are bisecant to the given curve $C$.
Note that $B^\vee=\Hom(K, U)\cong\Hom(\wedge^2 U, K^\vee(-1))$.
\begin{lemma}\label{lm-res}
Let $\beta\colon V_3\lr V_2\otimes B^\vee$ be a general stable quiver representation. For general $\beta$, the fiber $\pi_6^{-1}[\beta]$ consists of a single point. Let $C_{\beta}\subset X$ be the sextic curve represented by this point. Then $I_{C_\beta}$ has a resolution:
\[
0\lr V_3\otimes\wedge^2U\stackrel{\delta_\beta}{\lr}V_2\otimes K^\vee(-1) \lr I_{C_\beta}\lr 0,
\]
where $\delta_\beta=(\mathrm{id}_{V_2}\otimes\delta)\circ(\beta\otimes\mathrm{id}_{\wedge^2U})$ and $
\delta\colon
B^\vee\otimes\wedge^2U
\longrightarrow
K^\vee(-1)$ is the evaluation morphism.
\end{lemma}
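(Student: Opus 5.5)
The first claim follows from Corollary~\ref{cor-blowup}. A general $[\beta]\in N$ lies outside $\Gamma_X$, so $\rk L_\beta=8$ by Proposition~\ref{thm;blowup-center}. Hence $R_\beta=\ker L_\beta$ has dimension exactly four, and $\pi_6^{-1}[\beta]$ is the single point $([\beta],W=R_\beta)$. In particular $C_\beta=\mathcal{C}_{\beta,R_\beta}$, and \eqref{eq-I} reads
\[
0\lr R_\beta\otimes E\lr M_\beta\lr I_{C_\beta}\lr 0 .
\]

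For the resolution, I would imitate the proof of Proposition~\ref{prop-ubconic}, mixing the Kronecker sequence \eqref{eq-deg6} with the dual of \eqref{eq-af-one}. Tensoring \eqref{eq-af-one} with $V_3$, tensoring the sequence $0\to K^\vee(-1)\to A^\vee\otimes E\to U\to 0$ with $V_2$, and mapping the first to the second gives a commutative diagram of short exact sequences:
\[
\begin{array}{ccccccccc}
0&\lr& V_3\otimes\wedge^2U&\lr& V_3\otimes A\otimes E&\lr& V_3\otimes K&\lr&0\\
&&\big\downarrow{\delta_\beta}&&\big\downarrow{L_\beta\otimes\mathrm{id}_E}&&\big\downarrow{\beta_X}&&\\
0&\lr& V_2\otimes K^\vee(-1)&\lr& V_2\otimes A^\vee\otimes E&\lr& V_2\otimes U&\lr&0.
\end{array}
\]
The middle vertical map is $L_\beta\otimes\mathrm{id}_E$, where $L_\beta$ is the map in \eqref{kcmap} induced by $\beta$ and $\mu$. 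Compatibility of the composition with $\mu$ makes the right square commute with $\beta_X$. The left vertical map is then forced to be the induced map, and I would check that it equals $\delta_\beta=(\mathrm{id}_{V_2}\otimes\delta)\circ(\beta\otimes\mathrm{id})$ using $B^\vee\cong\Hom(\wedge^2U,K^\vee(-1))$, exactly as for $\delta_S$ in Proposition~\ref{prop-ubconic}.

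Now apply the snake lemma. Since $[\beta]\notin\Gamma_X$, the map $L_\beta$ is surjective with kernel $R_\beta$ of dimension $12-8=4$. So the middle column has kernel $R_\beta\otimes E$ and cokernel $0$. The right column has kernel $M_\beta$ and cokernel $0$ by Proposition~\ref{thm;degree-six-surj}. The snake sequence is therefore
\[
0\lr\ker\delta_\beta\lr R_\beta\otimes E\lr M_\beta\lr\mathrm{coker}\,\delta_\beta\lr 0 .
\]
The map $R_\beta\otimes E\to M_\beta$ is the evaluation map $u_{\beta,R_\beta}$, which is injective by Proposition~\ref{prop-inj} and Corollary~\ref{cor-exact} (here Lemma~\ref{lem-M-beta-stable} supplies the semistability of $M_\beta$). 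Hence $\ker\delta_\beta=0$, and $\mathrm{coker}\,\delta_\beta\cong M_\beta/(R_\beta\otimes E)\cong I_{C_\beta}$, which gives the claimed resolution.

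I expect the main obstacle to be identifying the maps precisely. One must show that the map $R_\beta\otimes E\to M_\beta$ produced by the snake lemma is the evaluation map $u_{\beta,R_\beta}$ and not some other map. One must also show that the left vertical arrow is $\delta_\beta$, which comes down to the compatibility of the evaluation maps in \eqref{eq-af-one} and its dual with the net $\mu$.

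For the first point, I would apply $\Hom(E,-)$ to the diagram. Since $E$ is exceptional and $\RHom(E,\wedge^2U)$ and $\RHom(E,K^\vee(-1))$ vanish by semiorthogonality (the latter via Serre duality, as in Remark~\ref{rem:K-Fourier-Mukai}), the induced map on $\Hom(E,-)$ is the inclusion $R_\beta\subset V_3\otimes A$, which is exactly what defines the evaluation map. If this check becomes awkward, a fallback is available: any injection $R_\beta\otimes E\to M_\beta$ inducing the identity on $\Hom(E,-)$ coincides with $u_{\beta,R_\beta}$, by Lemma~\ref{lem-recover W}.
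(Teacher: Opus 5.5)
Your proposal is correct and follows essentially the same route as the paper. It uses the same diagram \eqref{anres} built from \eqref{eq-af-one} and its dual, applies the snake lemma with $C_\beta=\Ext^1(E,M_\beta)=0$ for $[\beta]\notin\Gamma_X$, and then compares the result with \eqref{eq-I}.

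The identification you flag as the main obstacle is in fact immediate. The snake-lemma map $R_\beta\otimes E\to M_\beta$ is just the restriction of $\mathrm{id}_{V_3}\otimes\mathrm{ev}\colon V_3\otimes A\otimes E\to V_3\otimes K$, which is the evaluation map $u_{\beta,R_\beta}$.
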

\begin{proof}
Combining the bundle map $\beta_X$ in Proposition \ref{thm;degree-six-surj} with the exact sequence \eqref{eq-af-one} and its dual sequence gives a commutative diagram
\begin{equation}\label{anres}
\begin{array}{ccccccccc}
0
&\longrightarrow&
V_3\otimes\wedge^2U
&\longrightarrow&
V_3\otimes A\otimes E
&\longrightarrow&
V_3\otimes K
&\longrightarrow&
0
\\
&&
\big\downarrow{\delta_\beta}
&&
\big\downarrow{L_\beta\otimes\mathrm{id}_E}
&&
\big\downarrow{\beta_X}
\\
0
&\longrightarrow&
V_2\otimes K^\vee(-1)
&\longrightarrow&
V_2\otimes A^\vee\otimes E
&\longrightarrow&
V_2\otimes U
&\longrightarrow&
0,
\end{array}
\end{equation}
where the map $L_\beta$ in the middle column is exactly the one in \eqref{kcmap}.
For general $\beta$, we have $\Ext^1(E,M_\beta)=0$, and the diagram \eqref{anres} induces an exact sequence
\[
\ses{R_\beta\otimes E}{M_\beta}{\mathrm{coker}({\delta_\beta})}.
\]
Comparing this sequence with \eqref{eq-I} gives $\mathrm{coker}(\delta_{\beta})\cong I_C$.
\end{proof}

The length-three subscheme $Z_{\beta}\subset \P(B)$ determines the three bisecant conics of a general sextic curve.

\begin{proposition}\label{prop-sextic-bisecants}
For a general stable Kronecker representation $\beta\colon V_3\lr V_2\otimes B^\vee$, the associated sextic curve $C=\pi_6^{-1}([\beta])$ has three bisecant conics parametrized by the point $[\beta]\in N$. These are all the bisecant conics to $C_{\beta}$.
\end{proposition}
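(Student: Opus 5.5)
The plan mirrors the proof of Proposition \ref{prop-ubconic}, now applied to the resolution of Lemma \ref{lm-res}. For general $\beta$, the length-three subscheme $Z_\beta\subset\P(B)$ is reduced. It consists of three points $[\lambda_1],[\lambda_2],[\lambda_3]$, namely the points where the $2\times 3$ matrix of linear forms on $\P(B)$ given by $\beta$ drops rank. Concretely, $\lambda_i\in B$ is such that the contracted map $\beta_{\lambda_i}\colon V_3\to V_2$, obtained by pairing $B^\vee$ with $\lambda_i$, has rank at most one. Write $S_i\coloneqq\lambda_i^\perp\subset B^\vee$ and let $Q_i$ be the corresponding conic under $\P(B)\cong\bH_2(X)$ of Proposition \ref{conicsv22}; for general $\beta$ these are smooth conics. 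The rank drop means there is a line $\ell_i\subset V_2^\vee$ whose row contraction $(\ell_i\otimes 1)\circ\beta\colon V_3\to B^\vee$ lands in the two-plane $S_i$. For general $\beta$ this contraction has rank exactly two, so its image is $S_i$.

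For each $i$, I would compose the surjection $V_2\otimes K^\vee(-1)\to I_C$ of Lemma \ref{lm-res} with the projection $V_2\to V_2/\ell_i^\perp\cong\C$. This produces a commutative square mapping the resolution of $I_C$ to the sequence
\[
0\lr S_i\otimes\wedge^2U\xrightarrow{\delta_{S_i}}K^\vee(-1)\lr\omega_{Q_i}\lr0.
\]
This sequence is \eqref{star3} with the $\OO_X(-1)$ term removed: the kernel of $\delta_{S_i}$ is $\OO_X(-1)$, which is inside the image of $V_3\otimes\wedge^2U$. The square commutes because $\delta_\beta$ is built from $\beta$ and $\delta$, and the row of $\beta$ indexed by $\ell_i$ factors through $S_i$. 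The induced map $I_C\to\omega_{Q_i}$ is then compatible with the restriction $\OO_X\to\OO_{Q_i}$, so it realizes $I_C|_{Q_i}\to\omega_{Q_i}\cong\OO_{Q_i}(-1)$. The next step is to show this map is surjective, so that it identifies $I_{C\cap Q_i/Q_i}$ with $\OO_{\P^1}(-1)$; this means $C\cap Q_i$ has length two, i.e.\ $Q_i$ is bisecant to $C$. Equivalently, I would run the snake lemma as in Proposition \ref{prop-ubconic}. The kernel of $V_3\otimes\wedge^2U\to S_i\otimes\wedge^2U$ is $\ker\bigl((\ell_i\otimes1)\beta\bigr)\otimes\wedge^2U\cong\wedge^2U$, and the kernel of $V_2\otimes K^\vee(-1)\to K^\vee(-1)$ is $\ell_i^\perp\otimes K^\vee(-1)\cong K^\vee(-1)$. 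The resulting sequence $0\to\wedge^2U\to K^\vee(-1)\to J_i\to I_C\to\omega_{Q_i}$ should identify $J_i$ with the ideal of $C\cup Q_i$, a curve of degree $8$ and genus $1$. Comparing Hilbert polynomials then gives surjectivity and $\length(C\cap Q_i)=2$.

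Finally, for completeness, Lemma \ref{lem-num-bisec} gives $N_6=\binom{3}{2}=3$ bisecant conics for a general rational sextic. Since $Z_\beta$ is reduced and $\P(B)\cong\bH_2(X)$, the $Q_i$ are three distinct conics, so they exhaust all bisecant conics of $C_\beta$. Here generality of $\beta$ is used so that $\pi_6^{-1}[\beta]$ lies over $N\setminus\Gamma_X$ and $C_\beta$ is a smooth rational sextic, which holds by Proposition \ref{exi6}.

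The main obstacle is the middle step: proving that the induced map $I_C\to\omega_{Q_i}$ is surjective and that $J_i$ is the ideal of $C\cup Q_i$. This is precisely the point that Proposition \ref{prop-ubconic} treats tersely. I would try to deduce it from the Cohen--Macaulay degeneracy description of Theorem \ref{thm-det-codim-two} applied to $\wedge^2U\to K^\vee(-1)$, a map of ranks $3$ and $4$. Its degeneracy locus has degree $8$ by the Chern class computation, and contains both $C$ and $Q_i$ set-theoretically. For general $\beta$, $C$ and $Q_i$ share no component, so the locus is their union, and the length of $C\cap Q_i$ follows from $p_a(C\cup Q_i)=0+0+\length(C\cap Q_i)-1=1$.
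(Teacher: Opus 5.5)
Your strategy is the paper's. For each point $z_i=[\lambda_i]$ of the reduced scheme $Z_\beta$ you project $V_2$ onto $V_2/\mathrm{im}(\beta_{z_i})$; your $\ell_i^\perp$ is exactly $\mathrm{im}(\beta_{z_i})$. You then map the resolution of Lemma \ref{lm-res} onto \eqref{star3}, obtain $I_C\to\omega_{Q_i}$, and conclude as in Proposition \ref{prop-ubconic}. Invoking Lemma \ref{lem-num-bisec} for exhaustiveness is also what the paper implicitly relies on.

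The gap is in the step you call the main obstacle, where your proposed mechanism fails as written. The bottom row is not left exact, since $\ker\delta_{S_i}\cong\OO_X(-1)$, so you cannot run the snake lemma with $\wedge^2U\cong\ker(V_3\otimes\wedge^2U\to S_i\otimes\wedge^2U)$ on the left. Your sequence $0\to\wedge^2U\to K^\vee(-1)\to J_i\to I_C\to\omega_{Q_i}$ fails the rank count ($3-5+1-1\neq0$). Also $K^\vee(-1)$ has rank five, not four, so Theorem \ref{thm-det-codim-two} does not apply to a map $\wedge^2U\to K^\vee(-1)$.

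The correct bookkeeping replaces $S_i\otimes\wedge^2U$ by $\mathrm{im}(\delta_{S_i})$. The left kernel is then a rank-four bundle $F_i$ with $0\to\wedge^2U\to F_i\to\OO_X(-1)\to0$; compare $J=\mathrm{coker}\{\OO_X(-1)\to(R_S/W)\otimes E\}$ in Proposition \ref{prop-ubconic}. The snake lemma now gives $0\to F_i\to K^\vee(-1)\to J_i\to0$ with $J_i=\ker(I_C\to\omega_{Q_i})$. Since $\det F_i\cong\OO_X(-3)\cong\det K^\vee(-1)$, your degree-eight argument goes through:
\begin{itemize}
\item $I_{Q_i}I_C\subset J_i\subset I_C$ traps the support between $C$ and $C\cup Q_i$;
\item degree $8=6+2$ forces the Cohen--Macaulay curve to be $C\cup Q_i$, reduced;
\item $\chi(\OO_{C\cup Q_i})=\chi(\OO_C)+\chi(\omega_{Q_i})=0$ then gives $\length(C\cap Q_i)=2$.
\end{itemize}

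None of this is actually needed. Surjectivity of $I_C\to\omega_{Q_i}$ is automatic: precomposed with $V_2\otimes K^\vee(-1)\twoheadrightarrow I_C$, it equals the composite of surjections $V_2\otimes K^\vee(-1)\twoheadrightarrow K^\vee(-1)\twoheadrightarrow\omega_{Q_i}$. No Hilbert polynomial comparison is involved.

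Bisecancy then follows directly. Since $\omega_{Q_i}$ is a torsion-free $\OO_{Q_i}$-module, the map factors through $I_C\otimes\OO_{Q_i}$, and then through its torsion-free quotient $I_C\cdot\OO_{Q_i}=I_{C\cap Q_i/Q_i}\cong\OO_{\P^1}(-k)$ with $k=\length(C\cap Q_i)$ (using $Q_i\not\subset C$). A surjection of line bundles $\OO_{\P^1}(-k)\to\omega_{Q_i}\cong\OO_{\P^1}(-2)$ is an isomorphism, so $k=2$. This is what the identification $I_{C\cap Q/Q}\cong\omega_Q$ in Proposition \ref{prop-ubconic} amounts to.

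Note also that $\omega_{Q_i}\cong\OO_X(-1)|_{Q_i}\cong\OO_{\P^1}(-2)$, not $\OO_{\P^1}(-1)$ as you wrote. With $\OO_{\P^1}(-1)$ the conic $Q_i$ would be unisecant.
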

\begin{proof}
The representation $\beta$ induces a bundle map on the projective plane $\PP(B)$:
\[
\tilde{\beta}\colon V_3\otimes \cO_{\PP(B)}(-1)\lr V_2\otimes \cO_{\PP(B)}.
\]
Let $Z_\beta\coloneqq D_{\leq 1}(\tilde{\beta})$ be the degeneracy locus of the map $\tilde{\beta}$. For general $\beta$, it is well known that there exists an exact sequence
\[
\ses{V_2^\vee\otimes \cO_{\PP(B)}(-3)}{V_3^\vee\otimes \cO_{\PP(B)}(-2)}{I_{Z_{\beta}}}
\]
such that $\mathrm{length}(Z_\beta)=3$. That is, the map $\beta$ provides a point $[Z_\beta]\in \mathrm{Hilb}^3(\PP(B))$ in the Hilbert scheme of three points in $\PP(B)$. Let us consider the case of three reduced points: $Z_\beta=\{z_1, z_2, z_3\}$. For $z=z_i=[\lambda]\in \PP(B)$, we can regard $\lambda\colon B^\vee\lr \CC$. That is, $S_z=\ker(\lambda)\subset B^\vee$, $\dim S_z=2$. Since $Z_\beta$ is the degeneracy locus of the bundle map $\tilde{\beta}$, the composition map
\[
\beta_z\colon V_3\longrightarrow V_2\otimes B^\vee \stackrel{\mathrm{id}_{V_2}\otimes \lambda}{\longrightarrow} V_2\otimes \CC \cong V_2
\]
has rank one. Define the canonical map $\pi_z\colon V_2 \lr V_2/ \mathrm{im}(\beta_z)=\overline{V}_2$. Since $(\pi_z\otimes\lambda) \beta =\pi_z \beta_z=0$, the map $(\pi_z\otimes \mathrm{id}_{B^\vee})\beta\colon V_3\lr \overline{V}_2\otimes B^\vee$ has image contained in $\overline{V}_2\otimes S_z$ and therefore, factors through this subspace. Hence, Lemma \ref{lm-res} and \eqref{star3} give a commutative diagram:
\[
\begin{tikzcd}[column sep=large,row sep=large] V_3\otimes\wedge^2U \arrow[r,"\delta_\beta"] \arrow[d,"\beta_z\otimes1"'] & V_2\otimes K^\vee(-1) \arrow[r] \arrow[d,"\pi_z\otimes1"] & I_C \arrow[d,two heads,"\rho_z"] \arrow[r] &0 \\ \bar V_2\otimes S_z\otimes\wedge^2U \arrow[r,"1\otimes\delta_{S_z}"'] & \bar V_2\otimes K^\vee(-1) \arrow[r,"1\otimes q_{S_z}"'] & \bar V_2\otimes\omega_{Q_z} \arrow[r] &0 . \end{tikzcd}\]
Therefore, there is a surjective map
\[
\rho_z\coloneqq I_C\twoheadrightarrow \overline{V}_2\otimes\omega_{Q_z}\cong \omega_{Q_z}.
\]
By the same argument as in Proposition \ref{prop-ubconic}, one can conclude that $Q_z$ is a bisecant conic to the curve $C$. Applying this construction to the three points of $Z_{\beta}$ gives the three bisecant conics.
\end{proof}

\section{Applications}\label{sec-dt}

\subsection{A Torelli-type theorem from rational sextics}
In this subsection, we prove that the sextic Hilbert scheme $\bH_6(X)$ determines the prime Fano threefold $X$.

\begin{proof}[Proof of Corollary \ref{coro:torelli-H6}]
Let $\mc H_X\coloneqq \bH_6(X)$ and $\mc H_{X'}\coloneqq \bH_6(X')$. By Corollary \ref{cor-blowup}, there are blow-up morphisms
\[
\pi_X\colon \mc H_X\cong \mathrm{Bl}_{\Gamma_X}N\longrightarrow N,\quad \pi_{X'}\colon \mc H_{X'}\cong \mathrm{Bl}_{\Gamma_{X'}}N\longrightarrow N,
\]
where $N=\mathbf{K}_3(3,2)$. By \cite{Dre88}, $\Pic(N)=\ZZ H_N$, where $H_N$ is the ample generator and $K_N=-3H_N$. Set $L_X\coloneqq \pi_X^*H_N$ and $L_{X'}\coloneqq \pi_{X'}^*H_N$, and denote the exceptional divisors by $E_X$ and $E_{X'}$, respectively. Let $\Phi\colon \mc H_X\xrightarrow{\sim}\mc H_{X'}$ be an isomorphism. We first show that $\Phi$ identifies the two blow-up morphisms. For $D\in N^1(\mc H_X)_{\Q}$, define
\[
q_D(u,v)\coloneqq \int_{\mc H_X} u\cup v\cup D^3,\quad u,v\in H^3(\mc H_{X},\Q).
\]
Let $j\colon E_X\hookrightarrow \mc H_X$ and $p\colon E_X\to \Gamma_X$ be the natural inclusion and projection. The blow-up formula gives
\[
H^3(\mc H_X,\Q)=j_{\ast}p^{\ast}H^1(\Gamma_X,\Q).
\]
Write $D=aL_X+bE_X$ and put $\xi=c_1(\OO_{E_X}(1))$. Since $j^*E_X=-\xi$, $j^*L_X=p^*(H_N|_{\Gamma_X})$ and $p_{\ast}(\xi^4)=1$, the projection formula gives
\[
q_D(j_{\ast}p^{\ast}\alpha, j_{\ast}p^{\ast}\beta)=b^3\int_{\Gamma_X}\alpha\cup\beta.
\]
The intersection form on $H^1(\Gamma_X,\Q)$ is nondegenerate. Therefore,
\[
\{D\in N^1(\mc H_X)_{\Q}\mid q_D=0\}=\Q L_X.
\]
This set is preserved by the isomorphism $\Phi$. Since $L_X$ and $L_{X'}$ are primitive nef classes, we have $\Phi^*L_{X'}=L_X$. Also, the canonical divisor formula gives $\Phi^{\ast}E_{X'}=E_X$. The section ring of $L_X$ recovers the contraction:
\[
N\cong \Proj \bigoplus_{m\geq 0}H^0(\mc H_X,mL_X).
\]
Consequently, $\Phi$ descends to an automorphism $\overline{\Phi}\colon N\xrightarrow{\sim} N$ and identifies the centers $\overline{\Phi}(\Gamma_X)=\Gamma_{X'}$ as closed subschemes of $N$. By Proposition \ref{thm;blowup-center}, we have $\Gamma_X=j_X(\Delta_X)$ and $\Gamma_{X'}=j_{X'}(\Delta_{X'})$, and this construction is reversible. More precisely, the maximal-minor locus $\Gamma_X\subset N$ determines the corresponding map
\[
\mc L_X\colon \mc V_3\otimes A_X\longrightarrow \mc V_2\otimes A_{X}^\vee,
\]
and hence determines the composition map $\mu_X\colon B^\vee \otimes A_X\to A_X^\vee$ up to changes of basis in $A_X$ and $B$. Thus, the isomorphism of pairs $(N,\Gamma_X)\cong (N,\Gamma_{X'})$ gives the corresponding net of quadrics. Finally, by \cite[Theorem 1.1 and Corollary 1.2]{Sch01}, the net of quadrics determines the Fano threefold $X$ up to isomorphism. Therefore, we conclude that $X\cong X'$.
\end{proof}

\subsection{Donaldson--Thomas type invariants}\label{sec-dt1}
Let $X$ be a smooth Fano $3$-fold and $\beta \in H_2(X,\mathbb{Z})$ be an effective curve class. Let $\bM_\beta(X)(=\bM)$ denote the moduli space of one-dimensional stable sheaves $F$ on $X$ satisfying $[F]=\beta$ and $\chi(F)=1$. Let $\pi_{\bM}\colon \bM_{\beta}(X)\times X\lr \bM_{\beta}(X)$ and $\pi_X\colon \bM_{\beta}(X)\times X\lr X$ be the projections, and $\cF$ be a normalized universal sheaf. For $\gamma \in \rH^{4-2i}(X,\mathbb{Z})$, the \emph{insertion} is defined by
\[
\tau_{i}(\gamma)=\pi_{\bM*}\left(\pi_X^*\gamma\cup \{\mathrm{ch}(\cF)\cdot \text{td}(K_X)^{-1}\}_{i+2}\right).
\]
The corresponding invariant over $\bM_\beta(X)$ is
\begin{equation}\label{eq-dt-descendant}
\langle \tau_i(\gamma) \rangle_{\beta}\coloneqq \int_{[\bM_\beta(X)]^{\mathrm{vir}}} \tau_i (\gamma).  
\end{equation}
For $i=0$, the integral $\langle \tau_0(\gamma) \rangle_{\beta}$ is called the \emph{primary} Donaldson--Thomas invariant. For $i=1$, we call $\langle \tau_i(\gamma) \rangle_{\beta}$ a \emph{descendant invariant}. In this subsection, we compute the primary and descendant DT-invariants in degrees $4$, $5$, and $6$.
Since the moduli spaces of interest are smooth, the invariant in \eqref{eq-dt-descendant} can be written as
\[
\langle \tau_i(\gamma) \rangle_{\beta}=\int_{[\bM_\beta(X)]} \tau_i(\gamma) e(\mathrm{Ob}),
\]
where $e(\mathrm{Ob})$ is the Euler class of the obstruction bundle $\mathrm{Ob}$ with fiber $\mathrm{Ob}|_{[F]}=\Ext_X^2(F,F)$ on $\bM_\beta(X)$. Since $X$ is Fano, the K-class  of the obstruction bundle is $[\mathrm{Ob}]
=[T_{\bM_\beta(X)}]+[R\pi_{\bM *}R\mathcal Hom(\cF,\cF)]-[\mathcal O]$ .
For the computation, we recall the Chow ring of the Kronecker moduli space $N=\bK_3(3,2)$, which will be used in the degree $6$ computation. The computations below were verified using Macaulay2 (\cite{M2}).

Let $\mc V_3$ and $\mc V_2$ be the universal bundles of ranks $3$ and $2$
on $N$, respectively. Set $b_i\coloneqq c_i(\mc V_3^\vee)$ and $d_i\coloneqq c_i(\mc V_2^\vee)$. In particular, $b_1=d_1$.

\begin{lemma}\label{lem:chow-ring-N}
The Chow ring of $N$ is given by
\[
A^*(N)_{\Q}\cong \mathbb Q[b_1,b_2,d_2]/I_N,
\]
where
\[
I_N=\bigl(
b_1^2d_2-3d_2^2,
 b_1^2b_2-b_2d_2-3d_2^2,b_1^4+3b_2^2-9b_2d_2-3d_2^2,
 2b_1b_2d_2-3b_1d_2^2,3b_1b_2^2-7b_1d_2^2
\bigr).
\]
Moreover,
\[
b_3=\frac{-b_1^3+4b_1d_2}{3},
\qquad
\int_Nd_2^3=2.
\]
\end{lemma}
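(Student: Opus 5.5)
Our plan is to obtain the presentation of $A^*(N)_{\Q}$ from the general description of the cohomology of Kronecker moduli spaces, and then to check the stated relations and numbers by a localization computation. The space $N=\mathbf K_3(3,2)$ is a smooth projective GIT quotient $\Hom(V_3,V_2\otimes B^\vee)/\!\!/(\mathrm{GL}(V_3)\times \mathrm{GL}(V_2))/\C^*$ with no strictly semistable points, since $\gcd(3,2)=1$. By Kirwan surjectivity, or equivalently by King's result that the universal bundles generate (cf.\ \cite{Kin94}), the ring $A^*(N)_{\Q}=H^{2*}(N,\Q)$ is generated by the Chern classes of $\mathcal V_3^\vee$ and $\mathcal V_2^\vee$. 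The normalization of the universal family removes one scalar. We fix it so that $\det\mathcal V_3\cong\det\mathcal V_2$, which gives $b_1=d_1$, and then the generators are $b_1,b_2,b_3,d_2$.

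The first step is the relation expressing $b_3$. For this we use the tautological complex
\[
0\to \mathcal V_3\xrightarrow{\pmb\beta}\mathcal V_2\otimes B^\vee\to \mathcal Q\to 0 .
\]
The map $\pmb\beta$ is fiberwise injective: by stability, a kernel vector would span a subrepresentation of dimension vector $(1,0)$. Hence $\mathcal Q$ is a rank-three bundle, so $c_4(\mathcal Q)=0$. Expanding $c(\mathcal V_2\otimes B^\vee)/c(\mathcal V_3)$ in degree $4$ shows this relation is linear in $b_3$ (through $b_1b_3$). We will check that $\ker(\pmb\beta^\vee)$ gives a further rank relation in degree $3$, with $d_1=b_1$ substituted, which solves to $b_3=\frac{-b_1^3+4b_1d_2}{3}$. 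The remaining relations in degrees $4$ and $5$ should then be the vanishing classes $c_5(\mathcal Q)$ and $c_6(\mathcal Q)$, modulo this expression for $b_3$. This is the Ellingsrud--Strømme type presentation, which is how $\Hilb^3(\P^2)$-like rings are usually computed.

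To pin down the ideal $I_N$ exactly and to evaluate $\int_N d_2^3=2$, we will use torus localization. A maximal torus $T\subset\mathrm{GL}(B)$ acts on $N$ with isolated fixed points, namely the torus-fixed stable representations. These correspond to stable representations of the covering quiver and can be enumerated combinatorially. At each fixed point the weights of $\mathcal V_3$, $\mathcal V_2$ and of $T_N=\mathrm{Ext}^1$ are explicit. The Atiyah--Bott formula then computes every degree-six monomial in $b_1,b_2,d_2$. This yields the Poincaré pairing. Since $A^*(N)$ is a Poincaré duality algebra and is generated by these classes, $I_N$ is exactly the annihilator ideal determined by the pairing. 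We will verify that the five listed generators span the kernel in each degree, using the Betti numbers of $N$ (Reineke's formula), with a Macaulay2 check \cite{M2}. As a consistency test, the Fano index $-K_N=3H_N$ with $H_N=b_1$ should come out.

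We expect the main obstacle to be the enumeration of torus-fixed points and their tangent weights. Equivalently, it is the bookkeeping needed to show that the candidate relations generate the whole ideal and not only part of it. We will attack this by comparing Hilbert series with Reineke's Poincaré polynomial of $\mathbf K_3(3,2)$ (Betti numbers $1,1,3,3,3,1,1$), rather than proving the generation abstractly.
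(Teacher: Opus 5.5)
The step that fails is your derivation of the formula for $b_3$. Since $\pmb\beta$ is fiberwise injective, $\pmb\beta^\vee\colon\mathcal V_2^\vee\otimes B\to\mathcal V_3^\vee$ is surjective with kernel $\mathcal Q^\vee$. So ``$\ker(\pmb\beta^\vee)$'' only gives the relations $c_i(\mathcal Q)=0$ for $i\ge 4$, and nothing in degree $3$.

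Your account of the remaining relations is also off. $c_5(\mathcal Q)$ and $c_6(\mathcal Q)$ live in degrees $5$ and $6$. After eliminating $b_3$ there are six degree-four monomials in $b_1,b_2,d_2$, while $\dim A^4(N)=3$, so three degree-four relations are needed. $c_4(\mathcal Q)=b_2^2-b_1b_3-3b_2d_2+3d_2^2$ is only one of them.

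The missing idea is the second destabilizing dimension vector $(2,1)$ (weight $2\cdot2-3\cdot1>0$), as in Ellingsrud--Str{\o}mme and King--Walter. Set $F\coloneqq\Gr(2,\mathcal V_3)\times_N\P(\mathcal V_2)$, with tautological $\mathcal S\subset\mathcal V_3$ and $\mathcal L\subset\mathcal V_2$.
\begin{itemize}
\item The composite $\mathcal S\to\mathcal V_3\to\mathcal V_2\otimes B^\vee\to(\mathcal V_2/\mathcal L)\otimes B^\vee$ is a section of a rank-six bundle.
\item It vanishes exactly at $(2,1)$-subrepresentations, hence nowhere by stability, so its $c_6$ is zero.
\item Pushing $c_6$ forward along $F\to N$ (relative dimension $3$) gives a degree-three relation. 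It works out to exactly $3b_3+b_1^3-4b_1d_2=0$.
\item Multiplying $c_6$ by $c_1(\mathcal V_3/\mathcal S)$ or $c_1(\mathcal V_2/\mathcal L)$ before pushing forward gives two more degree-four relations. Together with $c_4(\mathcal Q)$, and modulo the $b_3$ relation, these span the three degree-four generators of $I_N$.
\end{itemize}

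Your localization framework can absorb this in either of two ways.
\begin{itemize}
\item Add the $(2,1)$-relations above.
\item Or include $b_3$ in the Atiyah--Bott computation. Then compare Hilbert functions with the Betti numbers $1,1,3,3,3,1,1$ to conclude that $b_1,b_2,d_2$ already generate. It follows that $I_N$ is the annihilator of the pairing, and the coefficients of $b_3$ come from the integrals $\int_N b_3\,g$ with $\deg g=3$.
\end{itemize}
As written, localizing only monomials in $b_1,b_2,d_2$ never produces the $b_3$ formula.

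Two minor corrections:
\begin{itemize}
\item With $d_1=b_1$ one gets $c_1(T_N)=3c_1(\mathcal V_3^\vee\otimes\mathcal V_2)=-3b_1$, so $H_N=-b_1$, not $b_1$.
\item Generation of $A^*(N)_{\Q}$ by the universal Chern classes, and $A^*(N)_{\Q}=H^{2*}(N,\Q)$, are due to King--Walter (or follow from the Bialynicki-Birula decomposition for your torus action). They are not in \cite{Kin94}.
\end{itemize}

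Once repaired, your route is genuinely different from the paper's. The paper does not derive the presentation: it takes the ring and its relations from \cite[Proposition 4.4]{CM15}. It then fixes the normalization using the natural $\P^{2\vee}\subset N$: from $[\P^{2\vee}]=-3b_1^2b_2+5b_1^2d_2$, $j^*d_2=3h^2$ and the identity $d_2^3=\tfrac23[\P^{2\vee}]\,d_2$ in $A^6(N)$, it gets $\int_Nd_2^3=\tfrac23\cdot3=2$. That argument is short but rests on an external computation and on knowing the class of $\P^{2\vee}$.

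Your approach uses localization at the $13=\chi(N)$ isolated torus-fixed points plus Poincar\'e duality. It is self-contained and checks both the ideal and $\int_N d_2^3$ in one computation, at the cost of the fixed-point bookkeeping. The $(2,1)$-relation computation is a cheaper way to get the ideal itself, after which localization is needed only for the single number $\int_N d_2^3$.
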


\begin{proof}
The presentation of $A^*(N)$ and the relations follow from \cite[Proposition 4.4]{CM15}. On the other hand, consider the natural embedding $j\colon \mathbb P^{2\vee}\hookrightarrow N$. The class of $\mathbb P^{2\vee}$ in $N$ and the restriction of $d_2$
are given by
\[
[\mathbb P^{2\vee}]
=
-3b_1^2b_2+5b_1^2d_2,
\quad
j^*(d_2)=3h^2,
\]
where $h$ is the hyperplane class of $\mathbb P^{2\vee}$.
Therefore,
\[
\int_N d_2^3=
\frac{2}{3}\int_N
(-3b_1^2b_2+5b_1^2d_2)d_2=
\frac{2}{3}\int_{\mathbb P^{2\vee}} j^*(d_2)=
\frac{2}{3}\int_{\mathbb P^{2\vee}}3h^2
=2,
\]
where the first equality follows from the relations in $A^*(N)$.
\end{proof}
Using the descriptions of the universal families for $4\leq d\leq6$, together with Grothendieck--Riemann--Roch,
we obtain the following.

\begin{proposition}[cf. \protect{\cite[Proposition 3.17]{CLW24}}]\label{v22dt}
For $4\leq d\leq 6$, the primary/descendant invariants $\langle \tau_i(h_{2-i})\rangle_{d}$, $i=0, 1$ are listed in Table \ref{tab:invariants_d1to4} for the cohomology classes $h_1=H$, $h_2=\frac{H^2}{22}$ of $X$.
\begin{table}[htbp] \begin{center} \begin{tabular}{|l||M{2.2cm}|M{2.2cm}|} \hline $d$ & $i=0$ & $i=1$\\ \hline\hline $4$ & $168$ & $-168$\\ \hline $5$ & $1176$ & $-2688$\\ \hline $6$ & $9030$ & $-26880$\\ \hline \end{tabular} \caption{The invariants $\langle \tau_i(h_{2-i})\rangle_{d}$ for $4\leq d\leq 6$} \label{tab:invariants_d1to4} \end{center} \end{table}
\end{proposition}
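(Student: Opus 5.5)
Since $\bH_d(X)$ is smooth of dimension $d$ for $d=4,5,6$ (Propositions \ref{thm-deg4-iso}, \ref{thm-deg5-iso}, \ref{thm-deg6-iso}), I plan to compute the moduli space of one-dimensional stable sheaves rather than the Hilbert scheme directly. I will identify $\bM_d(X)$ with $\bH_d(X)$ via $C\mapsto \OO_C$; for CM curves of arithmetic genus zero and degree $\le 6$ on a prime Fano threefold, $\OO_C$ is stable with $\chi=1$, and conversely every such stable sheaf is the structure sheaf of a CM curve. I would cite \cite{CLW24} for this, in the same form as in degrees at most three.

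The key observation is that the obstruction space vanishes. By Serre duality, $\Ext^2(\OO_C,\OO_C)\cong \Ext^1(\OO_C,\OO_C(-H))^\vee$. The deformation computations in the three propositions show that $\Ext^1(I_C,I_C)$ has dimension $d=\chi(N_C)$ and that $\Ext^2(I_C,I_C)=0$ (in degree six this uses surjectivity of $\delta$). Transferring this to $\OO_C$ gives $\Ext^2(\OO_C,\OO_C)=0$, so $e(\mathrm{Ob})=1$ and $[\bM]^{\mathrm{vir}}=[\bM]$. Hence
\[
\langle\tau_i(h_{2-i})\rangle_d=\int_{\bH_d(X)}\pi_{\bM*}\bigl(\pi_X^*h_{2-i}\cdot\{\ch(\cF)\,\mathrm{td}(K_X)^{-1}\}_{i+2}\bigr),
\]
which is a degree-$d$ class on the $d$-dimensional space. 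For the universal sheaf I take $\cF=\OO_{\mc C}$, possibly twisted by a line bundle from the base to normalize it; I would check that the twist does not affect $\tau_0$ and adjusts $\tau_1$ only in a controlled way. From the universal resolutions \eqref{univ:4}, the $\mc M_5$-sequence with \eqref{eq-deg5-eval}, and \eqref{eq:un6}, the K-class is
\[
[\OO_{\mc C}]=[\OO]-[\mc M]\otimes\Lambda^{-1}+[\mc W\boxtimes E]\otimes\Lambda^{-1},
\]
with $[\mc M]$ expressed in turn through $\mc S_5\boxtimes K - U$ or $\mc V_3\boxtimes K-\mc V_2\boxtimes U$. Multiplying Chern characters, using $\ch(E),\ch(K),\ch(U)$ from Subsection \ref{subsec-V22}, and pushing forward along $X$ reduces everything to polynomials in the Chern classes of the tautological bundles on the base.

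The base integrals are then evaluated degree by degree. For $d=4$, Schubert calculus on $\Gr(2,4)$ suffices. For $d=5$, I use the relative Grassmannian $\Gr_{\P^2}(3,\mc R_5)\cong\P(\mc R_5^\vee)$ over $\P^2$, whose Chern classes come from $0\to\mc R_5\to\mc S_5\otimes A\to A^\vee\otimes\OO\to0$, together with the Grothendieck relation. For $d=6$, $G_6=\mathrm{Bl}_{\Gamma_X}N$ is realized as the zero locus \eqref{eq:relze} inside $\Gr_N(4,\mc V_3\otimes A)$. Its fundamental class is $c_{top}(\mc W_6^\vee\otimes p^*(\mc V_2\otimes A^\vee))$, which has rank $32=\operatorname{codim}$, as expected. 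I would integrate over the relative Grassmannian and push down to $N$ using Lemma \ref{lem:chow-ring-N}, with $\int_N d_2^3=2$ as normalization.

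The main obstacle is the degree-six computation: keeping track of the normalization and line-bundle twist $\Lambda_6$, and making sure the pushforward from the $\Gr(4,12)$-bundle is done correctly. I would do this mechanically in Macaulay2 (Schubert2), and as a check verify that $\tau_0$ reproduces the expected enumerative count, namely the number of curves meeting $d$ general lines, $\int_{\bH_d}(\tau_0(h_2))^{d}$-type consistency.
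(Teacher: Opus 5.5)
Your pipeline matches the paper's: the K-class of $\OO_{\mathcal C}$ from the universal resolutions, Grothendieck--Riemann--Roch, and intersection theory on $\Gr(2,4)$, on $\Gr_{\P^2}(3,\mathcal R_5)$, and on the zero locus in $\Gr_N(4,\mathcal V_3\otimes A)$ pushed down to $N$. The gap is your central claim that the obstruction bundle vanishes. That claim is false, and so is the reduction to $\int_{\bH_d(X)}\tau_i(h_{2-i})$ that rests on it.

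\textbf{Why $\mathrm{Ob}\neq 0$.} The deformation theory of the sheaf $\OO_C$ differs from that of $I_C$. For a one-dimensional sheaf on a threefold, $\chi(\OO_C,\OO_C)=0$, whereas $\chi(I_C,I_C)=1-d$. Since $\dim\Hom(\OO_C,\OO_C)=1$ and $\Ext^3(\OO_C,\OO_C)\cong\Hom(\OO_C,\OO_C(-H))^\vee=0$, you get $\dim\Ext^1(\OO_C,\OO_C)-\dim\Ext^2(\OO_C,\OO_C)=1$. So $[\bM_d(X)]^{\mathrm{vir}}$ has dimension $1$, while $\bM_d(X)\cong\bH_d(X)$ is smooth of dimension $d$. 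The obstruction bundle therefore has rank $d-1$. Concretely, take a smooth rational $C$ with $H^1(N_{C/X})=0$. The local-to-global spectral sequence gives $\Ext^2(\OO_C,\OO_C)\cong H^0(C,\wedge^2N_{C/X})\cong H^0(\OO_{\P^1}(d-2))$, which has dimension $d-1$. So the vanishing $\Ext^2(I_C,I_C)=0$ does not transfer to $\OO_C$.

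\textbf{The degree count also fails.} $\tau_i(h_{2-i})=\pi_{\bM*}\bigl(\pi_X^*h_{2-i}\cup\{\ch(\cF)\,\mathrm{td}(K_X)^{-1}\}_{i+2}\bigr)$ has complex degree $(2-i)+(i+2)-3=1$. It is a divisor class on $\bM$, not a degree-$d$ class, so $\int_{\bH_d(X)}\tau_i(h_{2-i})$ gives no number for $d\ge 2$.

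\textbf{What is missing.} You need the Euler class $e(\mathrm{Ob})\in H^{2(d-1)}(\bH_d(X))$, and the invariant is $\int_{\bH_d(X)}\tau_i(h_{2-i})\,e(\mathrm{Ob})$. The paper obtains $\mathrm{Ob}$ from the K-theoretic identity $[\mathrm{Ob}]=[T_{\bM}]+[R\pi_{\bM*}R\mathcal{H}om(\cF,\cF)]-[\OO]$.
\begin{itemize}
\item The middle term comes from the class of $\OO_{\mathcal C}$ and the Euler pairings among $E,K,U$.
\item $[T_{\bM}]$ comes from the Grassmannian/zero-locus description.
\item For $d=6$ this gives $[\mathrm{Ob}_6]=[\OO]-14[\Lambda_6\otimes\mathcal V_3^\vee]+7[\Lambda_6\otimes\mathcal V_2^\vee]+8[\Lambda_6\otimes\mathcal W_6^\vee]$, of rank $1-42+14+32=5$.
\end{itemize}
Only after this step does the pushforward to $N$, with $\int_N d_2^3=2$, produce the table entries.

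Your proposed sanity check, counting curves meeting $d$ general lines via $\tau_0(h_2)^d$, is a different Gromov--Witten-type number. It would not detect this error.
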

\begin{proof}
We only give the details for degree $d=6$ since the other cases are typical ones. Recall that
\[
\bH_6(X)\cong G_6\cong\mathrm{Bl}_{\Gamma_X}N,
\quad
N=\bK_3(3,2),\quad \pi_6\coloneqq G_6\longrightarrow N.
\] 
Let $\mathcal W_6$ denote the tautological
rank-four bundle on $G_6$. By the construction of the universal family in \eqref{eq:un6}, there is an
exact sequence
\[
0
\longrightarrow
\mathcal W_6\boxtimes E
\stackrel{\mathbf{u}_6}{\longrightarrow}
\mathcal M_6
\longrightarrow
I_{\mathcal C}\otimes\Lambda_6
\longrightarrow0,
\]
where $\Lambda_6 = (\det \mc V_3)^5 \otimes (\det \mc V_2)^{-3} \otimes (\det\mathcal W_6)^{-2}$ and $\mathcal M_6$ fits into the exact sequence
\[
0
\longrightarrow
\mathcal M_6
\longrightarrow
\mc V_3\boxtimes K
\longrightarrow
\mc V_2\boxtimes U
\longrightarrow0.
\]
Consequently, after normalizing the family by $\Lambda_6^{-1}$, the universal structure sheaf $\mathcal O_{\mathcal C}$ has the following class in
$K_0(G_6\times X)$:
\begin{equation}\label{eq:no}
[\mathcal O_{\mathcal C}]
=
[\mathcal O]
-
[\Lambda_6^{-1}\mc V_3\boxtimes K]
+
[\Lambda_6^{-1}\mc V_2\boxtimes U]
+
[\Lambda_6^{-1}\mathcal W_6\boxtimes E].
\end{equation}

Put $v_3\coloneqq c_1(\mc V_3)$, $v_2\coloneqq c_1(\mc V_2)$ and $w\coloneqq c_1(\mathcal W_6)$.
Then $\lambda\coloneqq c_1(\Lambda_6)=5v_3-3v_2-2w$. Expanding \eqref{eq:no} to first order in the
cohomology of $G_6$, we obtain
\[
[\ch(\mathcal O_{\mathcal C})]_{\deg_{G_6}=1}
=
\lambda
\bigl(
3\ch(K)-2\ch(U)-4\ch(E)
\bigr)\\
-v_3\ch(K)
+v_2\ch(U)
+w\ch(E).
\]
Multiplying this expression by
$\mathrm{td}(K_X)^{-1}$ and taking the relevant components gives
\[
\begin{aligned}
\tau_0(h_2)
=
2v_3-v_2-w,\quad
\tau_1(h_1)
=
-12v_3+8v_2+5w.
\end{aligned}
\]
We now use the notation of Lemma \ref{lem:chow-ring-N}.
Set $s\coloneqq c_1(\mathcal W_6^\vee)$. Since $c_1(\mc V_3^\vee)=c_1(\mc V_2^\vee)=b_1$, we have $v_3=v_2=-b_1$ and $w=-s$. Hence, the descendant insertions take the particularly simple form
\begin{equation}\label{eq:sextic-insertions}
\begin{aligned}
\tau_0(h_2)=s-b_1,\quad
\tau_1(h_1)=4b_1-5s.
\end{aligned}
\end{equation}

We next describe the obstruction bundle explicitly. Let $\pi\colon G_6\times X\to G_6$ and $\mathcal R_6\coloneqq R\pi_*R\mathcal Hom(\mathcal O_{\mathcal C},\mathcal O_{\mathcal C})$. By \eqref{eq:no} and the Euler pairings of the exceptional bundles of $X$, we obtain
\[
\begin{aligned}
[\mathcal R_6]
={}&
[\mathcal O]
+[\cE nd(\mc V_3)]
+[\cE nd(\mc V_2)]
+[\cE nd(\mathcal W_6)]\\
&-14[\Lambda_6\otimes \mc V_3^\vee]
-3[\mc V_3^\vee\otimes \mc V_2]
+7[\Lambda_6\otimes \mc V_2^\vee]\\
&+8[\Lambda_6\otimes\mathcal W_6^\vee]
-4[\mathcal W_6^\vee\otimes \mc V_3]
+4[\mathcal W_6^\vee\otimes \mc V_2].
\end{aligned}
\]
On the other hand, from the construction of $G_6$ in \eqref{eq:relze}, we have
\[
\begin{aligned}
[TG_6]
={}&
3[\mc V_3^\vee\otimes \mc V_2]
-[\cE nd(\mc V_3)]
-[\cE nd(\mc V_2)]
+[\mathcal O]\\
&+4[\mathcal W_6^\vee\otimes \mc V_3]
-[\cE nd(\mathcal W_6)]
-4[\mathcal W_6^\vee\otimes \mc V_2].
\end{aligned}
\]
Therefore
\[\begin{aligned}
[\mathrm{Ob}_6]
&= [TG_6]+[\mathcal R_6]-[\mathcal O] \\
&= [\mathcal O]
-14[\Lambda_6\otimes \mc V_3^\vee]
+7[\Lambda_6\otimes \mc V_2^\vee]
+8[\Lambda_6\otimes \mathcal W_6^\vee].
\end{aligned}
\]
By the relations in
Lemma \ref{lem:chow-ring-N} and \eqref{eq:sextic-insertions}, we obtain
\[
\pi_{*}
\left(
e(\mathrm{Ob}_6)\tau_0(h_2)
\right)
=
4515\,d_2^3,
\quad
\pi_{*}
\left(
e(\mathrm{Ob}_6)\tau_1(h_1)
\right)
=
-13440\,d_2^3.
\]
Since $\int_Nd_2^3=2$ (Lemma \ref{lem:chow-ring-N}),
we obtain the numbers in the last row of Table \ref{tab:invariants_d1to4}.
\end{proof}

\bibliographystyle{alpha}
\bibliography{Library.bib}

\end{document}